\newcommand{\DD}{\Delta}
\newcommand{\R}{\mathbb{R}}
\newcommand{\C}{\mathbb{C}}
\newcommand{\N}{\mathbb{N}}
\newcommand{\E}{\mathcal{E}}
\renewcommand{\H}{\mathcal{H}}
\newcommand{\cN}{\mathcal{N}}
\renewcommand{\epsilon}{\varepsilon}
\renewcommand{\leq}{\leqslant}
\renewcommand{\geq}{\geqslant}
\renewcommand{\to}{\rightarrow}
\newcommand{\weakto}{\rightharpoonup}
\newcommand{\Rplus}{\mathbb{R}_{+}}
\newcommand{\eps}{\varepsilon}
\newcommand\1{{\ensuremath {\mathds 1} }}
\newcommand{\amax}{\alpha_{\mathrm{max}}}
\newtheorem{thm}{Theorem}
\newtheorem{lemma}{Lemma}
\newtheorem{corollary}{Corollary}
\newtheorem{prop}{Proposition}
\newtheorem{definition}{Definition}
\newtheorem{assumption}{Assumption}
\newtheorem{remark}{Remark}
\newtheorem*{convention}{Convention}
\newtheorem*{remarks}{Remarks}
\newtheorem*{remcom}{Remark}
\numberwithin{equation}{section}
\numberwithin{lemma}{section}
\numberwithin{prop}{section}
\numberwithin{corollary}{section}
\numberwithin{thm}{section}
\numberwithin{definition}{section}
\numberwithin{assumption}{section}
\numberwithin{remark}{section}
\begin{document}

\title[Uniqueness and Nondegeneracy]{Uniqueness and Nondegeneracy of \\ Ground States for $(-\DD)^s Q + Q - Q^{\alpha+1} =0$ in $\R$}

\author{Rupert L. Frank and Enno Lenzmann}

\address{Rupert L. Frank, Department of Mathematics, Princeton University, Washington Road, Princeton, NJ 08544, USA.}
\email{rlfrank@math.princeton.edu}
\address{Enno Lenzmann, Institute for Mathematical Sciences, University of Copenhagen, Universitetsparken 5, 2100 Copenhagen \O, Denmark.}
\email{lenzmann@math.ku.dk}

\maketitle

\begin{abstract}
We prove uniqueness of ground state solutions $Q = Q(|x|) \geq 0$ for the nonlinear equation
$$
(-\DD)^s Q + Q - Q^{\alpha+1}= 0 \quad \mbox{in $\R$}, 
$$
where $0 < s < 1$ and $0 < \alpha < \frac{4s}{1-2s}$ for $s < 1/2$ and $0 < \alpha < \infty$ for $s \geq 1/2$. Here $(-\DD)^s$ denotes the fractional Laplacian in one dimension. In particular, we generalize (by completely different techniques) the specific uniqueness result obtained by Amick and Toland for $s=1/2$ and $\alpha=1$ in [Acta Math., \textbf{167} (1991), 107--126].

As a technical key result in this paper, we show that the associated linearized operator  $L_+ = (-\DD)^s + 1 - (\alpha+1) Q^\alpha$ is nondegenerate; i.\,e., its kernel satisfies  $\mathrm{ker}\, L_+ = \mathrm{span} \, \{ Q' \}$. This result about $L_+$ proves a spectral assumption, which plays a central role for the stability of solitary waves and blowup analysis for nonlinear dispersive PDEs with fractional Laplacians, such as the generalized Benjamin-Ono (BO) and Benjamin-Bona-Mahony (BBM) water wave equations.
\end{abstract}

\section{Introduction}


\medskip

Fractional powers of the Laplacian arise in a numerous variety of equations in mathematical physics and related fields; see, e.\,g., \cite{CaSi07,AbBoFeSa89,We87,MaMcTa97,LiYa87,ElSc07,FrLe07,KeMaRo10} and references therein. Here, a central role within these models is often played by so-called {\em ground state solutions}, or simply {\em ground states}. By this, we mean nontrivial, nonnegative and radial functions $Q = Q(|x|) \geq 0$ that vanish at infinity and satisfy (in the distributional sense) an equation of the form
\begin{equation} \label{eq:GS}
(-\DD)^s Q + F(Q)  = 0 \quad \mbox{in $\R^d$}.
\end{equation} 
As usual, the fractional Laplacian $(-\DD)^s$ with $0 < s< 1$ is defined via its multiplier $|\xi|^{2s}$ in Fourier space, whereas $F(Q)$ denotes some given nonlinearity. In most examples of interest, the existence of ground states $Q= Q(|x|) \geq 0$ follows from variational arguments, applied to a suitable minimization problem whose Euler-Lagrange equation is given by \eqref{eq:GS}. Moreover, based on this variational approach, it is natural in these cases to require that a ground state is also a minimizer for some related variational problem in addition to just being a nonnegative and radial solution of \eqref{eq:GS}. Indeed, we will make use of this (strengthened) notion of a ground state in this paper further below.

\medskip
In striking contrast to the question of existence, it seems fair to say that extremely little is known about uniqueness of ground states $Q=Q(|x|) \geq 0$ for problems like \eqref{eq:GS}, except for the ``classical'' limiting case with $s=1$, where standard ODE methods are applicable.  Indeed, to the best of the authors' knowledge, the only examples for which uniqueness of ground states for \eqref{eq:GS} has been proven are: 
\begin{itemize}
\item Ground state solitary waves for the Benjamin-Ono equation in $d=1$ dimension; see \cite{AmTo91}. 
\item Optimizers for  fractional Sobolev inequalities in $d \geq 1$ dimensions; see \cite{ChLiOu06, Li04}.
\end{itemize}
In fact, in both cases the unique ground states are known in closed form. However, the uniqueness proof of both results hinges on a very specific feature of each problem: In the first case, the proof is intimately linked to complex analysis and special identities exhibited by the (completely integrable) Benjamin-Ono equation; whereas in the second case, the conformal symmetry of Sobolev inequalities plays a key role in the uniqueness proof. In particular, the specific arguments developed in \cite{AbBoFeSa89,AmTo91, ChLiOu06,Li04} are apparently of no use in a more general setting. Hence, we see that a satisfactory understanding of uniqueness for ground states of problems like \eqref{eq:GS} is largely missing. Clearly, the main analytical obstruction is that shooting arguments and other ODE techniques (which are essential in the classical case $s=1$; see, e.\,g., \cite{Kw89,McSe87,Mc93}) are not applicable to the nonlocal operator $(-\DD)^s$ when $0 < s < 1$.

\medskip
In the present paper, we address the question of uniqueness for a general class of the form \eqref{eq:GS} in $d=1$ space dimension. More precisely, we prove uniqueness of ground states $Q \in H^s(\R)$ for the nonlinear model problem
\begin{equation} \label{eq:phi}
(-\DD)^s Q + Q - Q^{\alpha+1}   = 0 \quad \mbox{in $\R$}.
\end{equation}
Here we assume that $0 < s <1$ and $0 < \alpha < \amax(s)$ holds,  where the critical exponent $\amax(s)$ is defined as
\begin{equation}
\amax(s) := \left \{ \begin{array}{ll} \frac{4s}{1-2s} & \quad \mbox{for $0 < s < \frac{1}{2}$}, \\
+\infty & \quad \mbox{for $\frac{1}{2} \leq s < 1$.}  \end{array} \right . 
\end{equation} 
Technically speaking, the condition that $\alpha $ be strictly less than $\amax(s)$, which is vacuous if $s \geq \frac{1}{2}$, ensures that the nonlinearity in equation \eqref{eq:phi} is {\em $H^s$-subcritical}. In fact, it turns out that this condition on $\alpha$ is necessary to have existence of ground states for \eqref{eq:phi}, since (by so-called Pohozaev identities) it is easy to see that \eqref{eq:phi} does not admit any non-trivial solutions in $H^s(\R) \cap L^{\alpha+2}(\R)$ when $\alpha \geq \amax(s)$ holds.

\medskip
Apart from being a natural model case for equation \eqref{eq:GS} in one space dimension, we remark that equation \eqref{eq:phi} and its solutions provide {\em solitary wave solutions} for three fundamental nonlinear dispersive model equations in $d=1$ dimensions: The generalized Benjamin-Ono equation (gBO), the  Benjamin-Bona-Mahony equation (gBBM) and the fractional nonlinear Schr\"odinger equation (fNLS):
\begin{equation*} \tag{gBO} \label{gBO}
u_t + u _x  - ((-\DD)^s u)_x +  u^{\alpha} u_x= 0, 
\end{equation*}
\begin{equation*} \tag{gBBM} \label{gBBM}
u_t + u _x + ((-\DD)^s u)_t +  u^{\alpha} u_x = 0,
\end{equation*}
\begin{equation*} \tag{fNLS}
i u_t - (-\DD)^s u + |u|^{\alpha} u = 0 , 
\end{equation*}
Note that in (gBO) and (gBBM) we assume that $\alpha \in \mathbb{N}$ is an integer and that $u = u(t,x)$ is real-valued.\footnote{We could extend to complex-valued $u$ and non-integer $\alpha$, by replacing $u^\alpha u_x$ with $|u|^\alpha u_x$. Indeed, such models are also of interest in the PDE literature; see, e.\,g., \cite{KeMaRo10}.} Suppose now that $Q=Q(|x|) \geq 0$ solves \eqref{eq:phi}. Then it is elementary to see that the following functions 
$$
u_c(t,x) = (c(\alpha+1))^{\frac{1}{\alpha}} Q\big (c^{\frac{1}{2s}}( x -(1+c)t) \big ),
$$
$$
u_c(t,x) = (c (\alpha+1))^{\frac{1}{\alpha}} Q \big ( (\frac{c}{1+c})^{\frac{1}{2s}} (x-(1+c)t ) \big ) ,
$$
$$
u_\omega(t,x) = e^{i  \omega t} \omega^{\frac{1}{\alpha}} Q(\omega^{\frac{1}{2s}} x),
$$
provide solitary wave solutions for (gBO), (gBBM) and (fNLS), respectively. In the first two (water wave) examples, the parameter $c > 0$ corresponds to the traveling speed of the wave to the right; whereas the parameter $\omega > 0$ plays the role of a oscillation frequency of the solitary wave for (fNLS). We refer to, e.\,g., \cite{We87,KeMaRo10,AlBo91, Lin08, BeBr83} for results on solitary waves for (gBO), (gBBM) and (fNLS). 

In all these cases, the uniqueness and the so-called nondegeneracy (see below) of the ground states $Q=Q(|x|) \geq 0$ are of fundamental importance in the stability and blowup analysis for the corresponding solitary waves $u_c(t,x)$ and $u_\omega(t,x)$ above. So far, except for the special case $s=1/2$ and $\alpha=1$ in \cite{AmTo91} and a perturbative result for $s \approx 1$ in \cite{KeMaRo10}, no rigorous results have been derived in this direction, and hence these properties of $Q=Q(|x|)$ have been imposed in terms of assumptions, partly supported by numerical evidence. In Theorems \ref{thm:nondeg} and \ref{thm:unique} below, we will in fact resolve uniqueness and so-called nondegeneracy of ground states for equation \eqref{eq:phi} in the full range $0 < s <1$ and $0 < \alpha < \amax(s)$.

\medskip
Before we formulate the main results of this paper, let us first recall some facts  about existence, regularity and spatial decay of ground state solutions for equation \eqref{eq:phi}. Indeed, by following the seminal approach of M.~Weinstein in \cite{We85,We87}, we notice that problem \eqref{eq:phi} has indeed non-trivial solutions $Q \in H^s(\R)$, which are optimizers of the Gagliardo-Nirenberg type inequality
\begin{equation}
\label{ineq:GN}
\int_{\R} | u |^{\alpha+2} \leq C_{\alpha,s} \left ( \int_{\R} | (-\Delta)^{\frac s 2} u |^{2} \right )^\frac{\alpha}{4s} \left ( \int_{\R} |u |^2 \right )^{\frac{\alpha}{4s} (2s - 1) + 1} ,
\end{equation}
in one space dimension. Here $C_{\alpha,s} > 0$ denotes the optimal constant depending on $\alpha$ and $s$. Equivalently, this claim follows from considering the `Weinstein' functional
\begin{equation} \label{eq:Jdef}
J^{s,\alpha}(u) := \frac{ \left ( \int | (-\Delta)^{\frac s 2} u |^{2} \right )^\frac{\alpha}{4s} \left ( \int |u |^2 \right )^{\frac{\alpha}{4s} (2s - 1) + 1} }{\int |u|^{\alpha+2} },
\end{equation}
defined for $u \in H^{s}(\R)$ with $u \not \equiv 0$. Clearly, every minimizer $Q\in H^s(\R)$ for $J^{s,\alpha}(u)$ optimizes the interpolation estimate \eqref{ineq:GN} and vice versa. In addition, any such nonnegative $Q \in H^s(\R)$ is found to satisfy equation \eqref{eq:phi} after some suitable rescaling $Q(x) \mapsto a Q(bx)$ with some positive constants $a > 0$ and $b > 0$.  

In summary, we have the following existence result and fundamental properties of solutions to \eqref{eq:phi}, which we can  infer from the literature.

\begin{prop} \label{prop:Q}
Let $0 < s < 1$ and $0 < \alpha < \amax(s)$. Then the following holds.

\begin{itemize}
\item[(i)] {\bf Existence:} There exists a solution $Q \in H^s(\R)$ of equation \eqref{eq:phi} such that $Q = Q(|x|) > 0$ is even, positive and strictly decreasing in $|x|$. Moreover, the function $Q \in H^s(\R)$ is a minimizer for  $J^{s,\alpha}(u)$.

\item[(ii)] {\bf Symmetry and Monotonicity:} If $Q \in H^s(\R)$ with $Q \geq 0$ and $Q \not \equiv 0$ solves \eqref{eq:phi}, then there exists $x_0 \in \R$ such that $Q(\cdot - x_0)$ is an even, positive and strictly decreasing in $|x-x_0|$.  

\item[(iii)] {\bf Regularity and Decay:} If $Q \in H^s(\R)$ solves \eqref{eq:phi}, then $Q \in H^{2s+1}(\R)$. Moreover, we have the decay estimate
$$
|Q(x)| + |xQ'(x)| \leq \frac{C}{1+|x|^{1+2s}} 
$$
 for all $x \in \R$ and some constant $C > 0$.
\end{itemize}
\end{prop}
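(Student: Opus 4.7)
The plan is to prove each of the three parts by adapting a well-established technique from the literature on (fractional) elliptic problems in $\R$, as the excerpt's phrase \emph{``we can infer from the literature''} already hints.

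For part (i), I would follow M.~Weinstein's variational strategy applied to the Gagliardo-Nirenberg functional $J^{s,\alpha}$. Take a minimizing sequence $\{u_n\} \subset H^s(\R) \setminus \{0\}$. By the fractional P\'olya-Szeg\H{o} inequality (Almgren-Lieb), symmetric decreasing rearrangement does not increase $\|(-\DD)^{s/2} u\|_2$ while leaving all $L^p$ norms invariant; hence one may assume each $u_n = u_n^{*}$ is nonnegative, even, and decreasing in $|x|$. The invariance of $J^{s,\alpha}$ under $u \mapsto a\, u(b\,\cdot)$ allows normalization $\|u_n\|_2 = \|(-\DD)^{s/2} u_n\|_2 = 1$; the subcriticality $\alpha < \amax(s)$ ensures that the infimum is positive and $\{u_n\}$ is bounded in $H^s(\R)$. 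The key compactness step exploits the pointwise bound $u_n(x) \leq C|x|^{-1/2}$ valid for symmetric-decreasing unit-$L^2$ functions; together with Helly's selection theorem and interpolation this upgrades weak $H^s$-convergence to strong $L^{\alpha+2}$-convergence, producing a minimizer $Q_*$ which, after a two-parameter rescaling, solves \eqref{eq:phi}. Strict positivity follows from the representation $Q = G \ast Q^{\alpha+1}$, where $G$ is the strictly positive kernel of $(I + (-\DD)^s)^{-1}$, and strict monotonicity in $|x|$ from the strict form of the rearrangement inequality.

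For part (ii), which I expect to be the main obstacle, I would apply the fractional moving-plane method in the integral-equation formulation of Chen-Li-Ou. Reflect across the hyperplane $T_\lambda = \{x = \lambda\}$ and define $w_\lambda(x) = Q(2\lambda - x) - Q(x)$. Start the planes at $\lambda = +\infty$, using the decay from part (iii) to initialize, and slide them inward; the positivity of the Bessel kernel $G$ combined with the monotonicity of $u \mapsto u^{\alpha+1}$ on the set where both reflected values are small enough yields a nonlocal maximum principle for $w_\lambda$. A standard continuation argument then produces a critical $\lambda_0 \in \R$ with $w_{\lambda_0} \equiv 0$, so $Q(\cdot - \lambda_0)$ is even and strictly decreasing in $|x - \lambda_0|$. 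The nonlocal nature of $(-\DD)^s$ rules out shooting, and the fact that $F(u) = u - u^{\alpha+1}$ is not globally monotone makes the classical Gidas-Ni-Nirenberg argument inapplicable without modification, which is why this step is delicate.

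For part (iii) I would bootstrap using $(I + (-\DD)^s) Q = Q^{\alpha+1}$. The operator $(I + (-\DD)^s)^{-1}$ gains $2s$ Sobolev derivatives, and the condition $\alpha < \amax(s)$ precisely ensures that $u \mapsto u^{\alpha+1}$ maps $H^s(\R)$ into $L^2(\R)$; after a first step one obtains $Q \in H^{2s}(\R) \hookrightarrow L^\infty(\R)$, and a finite number of further iterations (using that $Q$ is now bounded) gives $Q \in H^{2s+1}(\R)$. For the decay, I would invoke the known asymptotics of the one-dimensional Bessel-type kernel, $G(x) \sim c_s |x|^{-1-2s}$ as $|x| \to \infty$, combined with the convolution identity $Q = G \ast Q^{\alpha+1}$: splitting the integral into near and far field and exploiting the integrability and fast decay of $Q^{\alpha+1}$ (inherited from part (i) and an elementary barrier argument) yields $|Q(x)| \leq C(1+|x|)^{-1-2s}$. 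The companion bound on $|xQ'(x)|$ follows by differentiating under the convolution and repeating the split, using the analogous tail asymptotics for $G'$.
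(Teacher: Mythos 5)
Your overall plan is sound and tracks the paper's strategy fairly closely, but there is one genuine gap and one notable difference worth flagging.

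For part (ii), you assert that the moving-plane argument directly produces \emph{strict} decrease. This is the subtle point, and the paper explicitly does \emph{not} extract strictness from the moving planes. Following \cite{MaZh10}, the paper's moving-plane argument (verifying that the kernel $K$ of $((-\DD)^s+1)^{-1}$ is even, positive and radially decreasing, via Lemma \ref{lem:resolvent_p}) only delivers that $\widetilde Q(r)$ is nonincreasing. To upgrade to $\widetilde Q'(r) < 0$ for $r > 0$, the paper then uses a completely separate Perron--Frobenius argument: differentiating the equation gives $L_+ Q' = 0$ with $Q' \in L^2_{\mathrm{odd}}(\R)$ and $Q' \leq 0$ on $\{x>0\}$, so Lemma \ref{lem:perron2} (Perron--Frobenius in the odd sector, built on the strict positivity of the heat kernel difference $P^{(s)}(x-y,t) - P^{(s)}(x+y,t)$ for $x,y>0$) forces $Q'$ to be the odd ground state, which has a strict sign on $\R_+$. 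In your version, the claim $w_{\lambda_0} \equiv 0$ gives the symmetry axis but only $w_\lambda \geq 0$ for $\lambda > \lambda_0$; pushing this to strict inequality requires a strong maximum principle for the nonlocal integral operator, which you do not prove and which is precisely the piece the paper replaces by the spectral argument.

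On part (i), your route is genuinely different in the compactness step: you use the fractional P\'olya--Szeg\H{o} inequality, the pointwise bound for symmetric-decreasing $L^2$-functions, and Helly's selection theorem, whereas the paper invokes Weinstein's concentration-compactness machinery from \cite{We87}. Both are standard and either works; yours is arguably more self-contained in one dimension. However, note the paper also needs the \emph{strict} rearrangement inequality from \cite{FrSe08} (equality in P\'olya--Szeg\H{o} forces $u = u^*$ up to translation and phase) to conclude that every minimizer is radial; you mention this only at the end and should keep it in mind, since plain P\'olya--Szeg\H{o} does not suffice. On part (iii), the paper's bootstrap (Lemma \ref{lem:Qregular}) matches yours in spirit, using the resolvent bounds of Lemma \ref{lem:resolvent_p} and, for $s < 1/2$, the chain-rule-type inequality for fractional derivatives of $|Q|^\alpha Q$. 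For the decay estimate the paper simply cites \cite{KeMaRo10} and does not reprove it; your convolution-splitting argument with the Bessel kernel $G(x) \sim c_s |x|^{-1-2s}$ is the right idea, but watch the circularity in ``exploiting the fast decay of $Q^{\alpha+1}$'': you must first establish the preliminary decay $Q(x) \lesssim |x|^{-1/2}$ (from monotonicity and $Q \in L^2$), then iterate the convolution identity to sharpen it to $|x|^{-1-2s}$, which you leave implicit.
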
  

\begin{remarks}
{\em 
1.) As for the proof of Part (i), we can refer to M.~Weinstein's paper \cite{We87} where concentration-compactness type arguments are used to show existence of minimizers for $1/2 \leq s <1$. But the method can be applied to the range $0 < s < 1/2$ as well. Moreover, by {\em strict rearrangement inequalities} for $\int | (-\DD)^\frac{s}{2} u |^2$ when  $0 < s <1$ (see, e.\,g., \cite{FrSe08}), we can deduce that any minimizer $Q \in H^s(\R)$ for $J^{s,\alpha}(u)$ must be equal (apart from translation and phase) to its symmetric-decreasing rearrangement $Q^* = Q^*(|x|)$. See also Sections \ref{sec:main} and \ref{sec:unique} below.

2.) To derive Part (ii), we can directly adapt the moving plane method  recently developed in \cite{MaZh10}, combined with some properties of the integral kernel for $((-\DD)^s+1)^{-1}$ on $\R$. For more details, we refer to Appendix \ref{app:regularity}.

3.) The regularity proof of Part (iii) is worked out in Appendix \ref{app:regularity}. Moreover, it easy to see that $Q \in H^k(\R)$ for all $k \geq 1$, if the exponent $\alpha$ is a positive integer; see also \cite{LiBo96} for an analyticity result in this case. See \cite{KeMaRo10} and references given there for the spatial decay estimate stated above. 
}

\end{remarks}

\section{Main Results}

\label{sec:main} 

We now formulate the main results of this paper about ground state solutions to \eqref{eq:phi} that we define as follows.

\begin{definition} \label{def:gs}
Let $Q \in H^s(\R)$ be an even and positive solution of \eqref{eq:phi}. If $$J^{s,\alpha}(Q) = \inf_{u \in H^s(\R) \setminus \{0\}} J^{s,\alpha}(u),$$ then we say that $Q \in H^s(\R)$ is a {\bf ground state solution} of equation \eqref{eq:phi}.
\end{definition}

\begin{remark}
{\em In fact, there are several constrained variational problems that are equivalent to the unconstrained problem of minimizing $J^{s,\alpha}(u)$ on $H^s(\R) \setminus \{0\}$.

For example, in the so-called $L^2$-subcritical case when $0 < \alpha < 4s$, the constrained minimization problem, with parameter $N >0$,
$$
E(N) = \inf \left  \{ \frac{1}{2} \int |(-\DD)^{\frac{s}{2}} u |^2 + \frac{1}{\alpha+2} \int  |u|^{\alpha+2} : u \in H^s(\R), \int |u|^2 = N \right  \}
$$   
is attained at $u \in H^s(\R)$ if and only if $u=e^{i \vartheta} \lambda^{\frac{1}{\alpha}} Q( \lambda^{\frac{1}{2s}} ( \cdot + y ))$ with some $\vartheta \in \mathbb{R}$, $y \in \R$ and $\lambda >0$ chosen to ensure that $\int |u|^2 = N$ holds. Here $Q \in H^s(\R)$ is a ground state solution of \eqref{eq:phi} in the sense of Definition \ref{def:gs} above.
}
\end{remark}

Our first main result establishes the so-called {\em nondegeneracy} of the linearization associated with positive solutions $Q$ of \eqref{eq:phi} that are {\em local} minimizers for $J^{s, \alpha}(u)$; thus our result holds in particular when $Q$ is a ground state solution. As already mentioned, the nondegeneracy of the linearization around ground states plays a fundamental role  in the stability and blowup analysis for solitary waves for related time-dependent equations such as the generalized (gBO) and (gBBM) equations; see, e.\,g., \cite{We87, KeMaRo10, AlBo91, Lin08, BeBr83}, where the nondegeneracy of $L_+$ is imposed in terms of a spectral assumption, or proven for $s$ close to 1 by perturbation arguments. 

We have the following general nondegeneracy result.


\begin{thm}{\bf (Nondegeneracy).} \label{thm:nondeg}
Let $0 < s < 1$ and $0 < \alpha < \amax(s)$. Suppose that $Q \in H^s(\R)$ is a positive solution of \eqref{eq:phi} and consider the linearized operator
$$
L_+ = (-\DD)^s + 1 - (\alpha+1) Q^{\alpha}
$$
acting on $L^2(\R)$. Then the following conclusion holds: If $Q$ is local minimizer for $J^{s,\alpha}(u)$, then $L_+$ is nondegenerate, i.\,e., its kernel satisfies
$$
\mathrm{ker} \, L_+ = \mathrm{span} \, \{ Q' \}. 
$$
In particular, any ground state solution $Q= Q(|x|) > 0$ of equation \eqref{eq:phi} has a nondegenerate linearized operator $L_+$.
\end{thm}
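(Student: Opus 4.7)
The inclusion $\mathrm{span}\,\{Q'\} \subseteq \mathrm{ker}\, L_+$ is immediate from differentiating the equation $(-\DD)^s Q + Q - Q^{\alpha+1} = 0$ in $x$. For the reverse inclusion, I would exploit the evenness of $Q$, which makes $L_+$ respect the parity splitting $L^2(\R) = L^2_{\mathrm{even}} \oplus L^2_{\mathrm{odd}}$; writing $L_+ = L_+^{\mathrm{e}} \oplus L_+^{\mathrm{o}}$, I would analyze each sector separately, aiming to show $\mathrm{ker}\, L_+^{\mathrm{e}} = \{0\}$ and $\mathrm{ker}\, L_+^{\mathrm{o}} = \mathrm{span}\,\{Q'\}$.

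The first spectral ingredient is a Perron--Frobenius principle for $L_+$ on $L^2(\R)$. For $\mu > 0$ sufficiently large, $((-\DD)^s + 1 + \mu)^{-1}$ has a strictly positive convolution kernel on $\R$, and the Neumann expansion
\[
(L_+ + \mu)^{-1} = \sum_{k \geq 0} \big[((-\DD)^s + 1 + \mu)^{-1} (\alpha+1) Q^\alpha\big]^k ((-\DD)^s + 1 + \mu)^{-1}
\]
exhibits $(L_+ + \mu)^{-1}$ as positivity improving. Hence $L_+$ has a simple, strictly negative ground state eigenvalue $\lambda_0$ with strictly positive (hence even) eigenfunction $\chi_0$; the negativity follows from $\langle Q, L_+ Q\rangle = -\alpha \int Q^{\alpha+2} < 0$. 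The second ingredient is the local-minimization hypothesis: the standard Weinstein-type second-variation computation for $J^{s,\alpha}$ at $Q$, combined with the Euler--Lagrange equation, should reduce to $L_+ \geq 0$ on $\{Q\}^\perp$. Together these force the Morse index of $L_+$ to be exactly one, located in the even sector, and in particular $L_+^{\mathrm{o}} \geq 0$. A useful companion identity comes from differentiating the rescaled family $u_\omega(x) = \omega^{1/\alpha} Q(\omega^{1/(2s)} x)$ (which solves $(-\DD)^s u_\omega + \omega u_\omega - u_\omega^{\alpha+1} = 0$) at $\omega = 1$: it reads $L_+ \Lambda_0 Q = -Q$ with $\Lambda_0 Q := \tfrac{1}{\alpha} Q + \tfrac{1}{2s} x Q'$, and pairing with any $\phi \in \ker L_+$ gives $\langle \phi, Q\rangle = 0$ automatically, so $\ker L_+ \subseteq \{Q\}^\perp$.

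For the odd sector I would fold odd functions to $(0,\infty)$: since the convolution kernel $G_\mu$ of $((-\DD)^s + 1 + \mu)^{-1}$ on $\R$ is strictly positive, even, and radially decreasing, this resolvent acts on odd functions as an integral operator on $(0,\infty)$ with kernel $G_\mu(x-y) - G_\mu(x+y) > 0$ for $x, y > 0$ (because $|x-y| < x+y$). A Neumann expansion as before then shows $(L_+^{\mathrm{o}} + \mu)^{-1}$ is positivity improving on $L^2((0,\infty))$, so the bottom eigenvalue of $L_+^{\mathrm{o}}$ is simple with a sign-definite eigenfunction on $(0,\infty)$. Since $L_+^{\mathrm{o}} \geq 0$ and $-Q' > 0$ on $(0,\infty)$ (by strict monotonicity of $Q$) is an eigenfunction at eigenvalue $0$, the bottom is exactly $0$ and $\ker L_+^{\mathrm{o}} = \mathrm{span}\,\{Q'\}$.

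The even sector is the main obstacle: any $\phi \in \ker L_+^{\mathrm{e}}$ must lie in $\{\chi_0, Q\}^\perp \cap L^2_{\mathrm{e}}$, where $L_+$ is known only to be nonnegative, yet I need strict positivity on this codimension-two subspace in order to force $\phi = 0$. The Perron--Frobenius, Pohozaev, and Morse-index information pin down the negative eigenspace of $L_+$ and force $\phi \perp Q$, but they fall short of excluding a zero eigenvalue in $\{\chi_0, Q\}^\perp$. Closing this gap will, I expect, require a sharper variational identity --- perhaps one showing that the even part of the tangent space to the set of minimizers of $J^{s,\alpha}$ near $Q$ is one-dimensional (accounted for by the generator $Q$ of the scaling $u \mapsto a u$) --- after which a rigidity argument should yield $\ker L_+^{\mathrm{e}} = \{0\}$.
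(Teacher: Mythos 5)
Your reduction to the two parity sectors, your Perron--Frobenius treatment of the odd sector (folding to $(0,\infty)$ with the kernel difference $G_\mu(x-y)-G_\mu(x+y)>0$), your Morse-index count $\cN_-(L_+)=1$ from the second variation of $J^{s,\alpha}$, and the identity $L_+(\tfrac{1}{\alpha}Q+\tfrac{1}{2s}xQ')=-Q$ forcing $\ker L_+\perp Q$ all match steps that appear in the paper. But the even sector, which you explicitly leave open, is precisely where the paper's main new idea lives, and your diagnosis is accurate: abstract spectral information ($v\perp\chi_0$, $v\perp Q$, nonnegativity of $L_+$ on a codimension-one subspace) cannot by itself exclude an even zero mode, and no ``sharper variational identity'' of the kind you anticipate is what closes the argument. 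What is needed instead is pointwise nodal information about $v$: a substitute for Sturm--Liouville oscillation theory valid for the nonlocal operator $(-\DD)^s+V$.

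The paper supplies this as its key technical result (Theorem \ref{thm:courant}): any real second eigenfunction of $H=(-\DD)^s+V$ with $V$ in the Kato class changes sign at most twice on $\R$, so an even one changes sign exactly once on $(0,\infty)$. The proof goes through the Caffarelli--Silvestre extension to $\R^2_+$, a variational characterization of the eigenvalues of $H$ in terms of the local energy functional $c_a^{-1}\iint|\nabla u|^2y^a\,dx\,dy+\int V|u(\cdot,0)|^2\,dx$ (Corollary \ref{varprinc}), a Courant-type bound on the number of nodal domains of the extension (Theorem \ref{nodal}, using unique continuation for $\mathrm{div}(y^a\nabla u)=0$), and a topological (Jordan-curve) argument to convert the nodal-domain bound into a sign-change bound on the boundary. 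Once a putative even $v\in\ker L_+$ is known to have the single-crossing structure $v\geq 0$ on $\{|x|\leq r_*\}$, $v\leq 0$ on $\{|x|>r_*\}$, the paper concludes \`a la Chang--Gustafson--Nakanishi--Tsai: from $L_+Q=-\alpha Q^{\alpha+1}$ and $L_+(\tfrac{2s}{\alpha}Q+xQ')=-2sQ$ one gets both $\langle v,Q^{\alpha+1}\rangle=0$ and $\langle v,Q\rangle=0$ (you obtained only the latter), and then the test function $f=Q^{\alpha+1}-Q(r_*)^\alpha\,Q$, which by strict monotonicity of $Q$ has the same sign pattern as $v$, gives $\langle v,f\rangle>0$ while orthogonality forces $\langle v,f\rangle=0$ --- a contradiction. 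Without an oscillation estimate of this type your outline cannot be completed, so the proposal as it stands has a genuine gap at its central step.
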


\begin{remarks} {\em 1.) In fact, we will prove the nondegeneracy of $L_+$ under the (weaker) second-order condition 
$$
\frac{d^2}{d \eps^2} \Big |_{\eps = 0} J^{s,\alpha}(Q + \eps \eta) \geq 0  \quad \mbox{for all $\eta \in C^\infty_0(\R)$},
$$
which clearly holds true when $Q \in H^s(\R)$ is a local minimizer for $J^{s,\alpha}(u)$.

2.) An important application of Theorem \ref{thm:nondeg} arises in the stability and blowup analysis of solitary waves for related time-dependent equations; notably in terms of a  {\em coercivity estimate} for $L_+$, which readily follows from the nondegeneracy of $L_+$.  More precisely, for suitable two-dimensional subspaces $M \subset L^2(\R)$, we can derive the lower bound
$$
\langle \eta, L_+ \eta \rangle \geq \delta \| \eta \|_{H^s}^2 \quad \mbox{for $\eta \perp M$,}
$$
where $\delta > 0$ is some positive constant independent of $\eta$. For example, by using the result of Theorem \ref{thm:nondeg}, it is to easy see that $M = \mathrm{span} \, \{ \phi, Q' \}$ is a suitable choice, where $\phi = \phi(x)$ denotes the first eigenfunction of $L_+$ acting on $L^2(\R)$. 
}
\end{remarks}

Let us briefly comment on the proof of Theorem \ref{thm:nondeg}. The essential idea of the proof is to find to a suitable substitute for Sturm-Liouville theory in order to estimate the number of sign changes for the {\em second eigenfunction(s)} for ``fractional'' Schr\"odinger operators of the form 
$$
H=(-\DD)^s + V
$$  
in $d=1$ space dimension. In fact, it turns out that a key step in the proof of Theorem \ref{thm:nondeg} follows from an argument in \cite{ChGuNaTs07} developed for the classical ODE case when $s=1$ holds, {\em provided} we know that any (even) second eigenfunction of $L_+$ can change its sign only once on the positive real line $\{x > 0\}$. Obviously, the crux of that matter is that $(-\DD)^s$ is a nonlocal operator when $0 < s <1$; and hence estimating the number of zeros for eigenfunctions of $H=(-\DD)^s + V$ requires new arguments and insights, which substitute classical ODE techniques. 

Let us briefly explain in general terms how we tackle this difficulty. First, we recall the known fact that $(-\DD)^s$ can be regarded as a Dirichlet-Neumann operator for a suitable elliptic problem on the upper halfplane $\R^2_+  = \{ (x,y) \in \R^2 : y > 0 \}$; see, e.\,g., the recent work by Caffarelli-Silvestre in \cite{CaSi07} and also Graham-Zworski in \cite{GrZw03} for this observation in a geometric context. Using now this extension to the upper halfplane $\R^2_+$,  we derive --- as a technical key result --- a variational characterization of the eigenfunctions (and eigenvalues) for fractional Schr\"odinger operators $H=(-\DD)^s+V$ in terms of the Dirichlet type functional
$$
 A(u) = \iint_{\R_+^2} |\nabla u(x,y)|^2 y^{1-2s} \, dx \,dy + \int_{\R} V(x) |u(x,0)|^2 \, dx ,
$$
which is defined for suitable class of functions $u=u(x,y)$ on the upper halfplane $\R^2_+$, where $u(x,0)$ denotes the trace of $u(x,y)$ on the boundary $\partial \R_+^2 = \R \times \{ 0 \}$. Moreover, for the variational problem based on the functional $A(u)$, we establish a nodal domain bound \`a la Courant. From such estimates we can finally deduce a sharp upper bound on the number of sign changes for any second eigenfunction of the nonlocal operator $H=(-\DD)^s +V$ acting on $L^2(\R)$, as needed in the proof of Theorem \ref{thm:nondeg}. Furthermore, this estimate for eigenfunctions for $H$ involving $(-\DD)^s$ can be viewed as a generalization of the inspiring work by Ba\~nuelos-Kulczycki in \cite{BaKu04}, which studies eigenfunctions for $\sqrt{-\DD}$ on bounded intervals in $\R$.

\medskip
We now turn to the second main result of this paper, which proves global uniqueness of ground state solutions. As a consequence, we also obtain uniqueness of optimizers  for the interpolation inequality \eqref{ineq:GN}  up to scaling and translations.

\begin{thm} {\bf (Uniqueness).} \label{thm:unique}
Let $0 < s < 1$ and $0 < \alpha < \amax(s)$. Then the ground state solution $Q=Q(|x|) > 0$ for equation \eqref{eq:phi} is unique.

Furthermore, every optimizer $v \in H^s(\R)$ for the Gagliardo-Nirenberg inequality \eqref{ineq:GN} is of the form $v = \beta Q( \gamma( \cdot + y))$ with some $\beta \in \C$, $\beta \neq 0$, $\gamma > 0$ and $y \in \R$.
\end{thm}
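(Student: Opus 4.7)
My plan is to prove Theorem~\ref{thm:unique} by a continuation argument in the fractional parameter $s$ (and, if necessary, in $\alpha$), with Theorem~\ref{thm:nondeg} serving as the key analytic input that powers the Implicit Function Theorem. The admissible parameter region $\{(s,\alpha) : 0 < s < 1,\ 0 < \alpha < \amax(s)\}$ is open and connected, so I would anchor the continuation at a reference point where uniqueness is already known --- either the regime $s$ close to $1$ covered by the perturbative analysis of Kenig-Martel-Robbiano \cite{KeMaRo10} (which builds on Kwong's classical theorem \cite{Kw89}), or the integrable case $(s,\alpha) = (1/2,1)$ of Amick-Toland \cite{AmTo91}. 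Let $S$ denote the set of admissible pairs $(s,\alpha)$ for which the ground state solution of \eqref{eq:phi} is unique up to translation; the task is then to show that $S$ is both open and closed in the admissible region, forcing $S$ to be the whole region.

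For openness of $S$, I would view \eqref{eq:phi} as the zero set of $F(s,\alpha,Q) := (-\DD)^s Q + Q - Q^{\alpha+1}$, acting as a smooth map between suitable Banach spaces of even, positive functions. At a ground state $Q_{s_0,\alpha_0}$, the Fr\'echet derivative $D_Q F$ is the linearized operator $L_+$. By Theorem~\ref{thm:nondeg}, $\ker L_+ = \mathrm{span}\{Q'_{s_0,\alpha_0}\}$ consists of odd functions, so restricted to the even subspace $L_+$ has trivial kernel and is Fredholm of index zero, hence an isomorphism. The IFT then produces a unique local branch $(s,\alpha) \mapsto Q_{s,\alpha}$ of even positive solutions through $Q_{s_0,\alpha_0}$. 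Since $L_+$ has Morse index one at $Q_{s_0,\alpha_0}$ (a consequence of $Q$ being a local minimizer for $J^{s,\alpha}$ on the corresponding Pohozaev manifold), and since this index is stable under small perturbations, each $Q_{s,\alpha}$ on the branch remains a ground state. Any other ground state near $(s_0,\alpha_0)$ must coincide with the branch by the local uniqueness clause of the IFT, which gives openness.

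For closedness, I would combine the variational characterization via $J^{s,\alpha}$ with concentration-compactness and the uniform pointwise decay in Proposition~\ref{prop:Q}(iii) to extract, from any sequence of ground states $Q_n$ at $(s_n,\alpha_n) \to (s_0,\alpha_0)$, a translation-adjusted strongly convergent subsequence whose limit is a ground state at $(s_0,\alpha_0)$. If there were two distinct ground states at $(s_0,\alpha_0)$, each would extend by the openness step to a local branch, and uniqueness along $(s_n,\alpha_n)$ would force the two branches to coincide, a contradiction. The characterization of optimizers of \eqref{ineq:GN} then follows at once: by the strict rearrangement inequality for $\|(-\DD)^{s/2}u\|_{L^2}$ noted in the remarks to Proposition~\ref{prop:Q}, every optimizer equals its symmetric-decreasing rearrangement up to translation and phase; after the standard rescaling $u \mapsto \beta u(\gamma\,\cdot)$ it solves \eqref{eq:phi} and hence equals the unique ground state $Q$.

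I expect the main obstacle to be the uniform compactness required for the closedness step. One must rule out loss of tightness, splitting, or concentration of the ground states $Q_n$ as $(s_n,\alpha_n)$ varies, which is delicate near the boundary $\alpha = \amax(s)$ (the scaling-critical regime) and near $s \to 0$ (where the regularizing effect of $(-\DD)^s$ weakens). A secondary technical wrinkle is that the natural regularity space $H^{2s+1}$ depends on $s$, so the IFT must be set up in a common Banach space --- for instance a fixed $H^k$ with $k$ large combined with a polynomial-decay weight --- and the sharp regularity recovered a posteriori from the equation itself.
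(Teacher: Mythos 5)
Your open-closed continuation in $(s,\alpha)$ is structurally different from the paper's argument, which fixes $\alpha$, introduces an auxiliary Lagrange parameter $\lambda_s$ normalized so that $\int|Q_s|^{\alpha+2}$ is conserved, and continues the branch $(Q_s,\lambda_s)$ in $s$ alone all the way to the classical endpoint $s=1$, where uniqueness and nondegeneracy of positive even solutions of $-Q_*''+\lambda_* Q_*-Q_*^{\alpha+1}=0$ are known by ODE methods. The contradiction at the end comes from the observation that the conserved quantity $\int|Q_0|^{\alpha+2}$ is the same for \emph{every} ground state at $s_0$ (by the Pohozaev identities together with the Gagliardo--Nirenberg identity), hence the limiting $\lambda_*$ is the same for any two branches, so both branches converge to the \emph{same} nondegenerate limit as $s\to1$ and must eventually coincide --- contradicting the local uniqueness of branches.

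The reason the paper takes this route rather than yours is that it sidesteps a claim which your proposal needs and which is in fact unjustifiable: ``each $Q_{s,\alpha}$ on the branch remains a ground state.'' Stability of the Morse index under small perturbation tells you only that $Q_{s,\alpha}$ is a \emph{local} minimizer of the relevant constrained problem; it does not make it a \emph{global} minimizer of $J^{s,\alpha}$, which is what Definition~\ref{def:gs} requires. The paper flags this explicitly in the outline preceding Section~\ref{sec:unique}: it ``\emph{cannot} be inferred that $Q_s$ (up to rescaling) is also a ground state solution.'' The paper never needs it --- it only needs the branch to consist of positive even solutions with $\cN_{-,\mathrm{even}}(L_+)=1$, which suffices to keep Lemma~\ref{lem:kernel} in force and to propagate Assumption~\ref{ass:branch}. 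Your closedness step, however, genuinely requires the branch members to be ground states, because it invokes uniqueness of \emph{ground states} at parameters $(s_n,\alpha_n)\in S$ to force the two branches emanating from $Q^{(1)}$ and $Q^{(2)}$ to agree there. Without that certification the step collapses. A further technical point: the decay bound in Proposition~\ref{prop:Q}(iii) has an $s$-dependent constant and so will not deliver uniform tightness along a branch; the paper instead proves the weaker but uniform estimate $Q_s(|x|)\lesssim|x|^{-1}$ (Lemma~\ref{lem:decay}), whose proof exploits the propagated positivity and radial monotonicity of $Q_s$ and the uniform resolvent kernel bounds, and the key Gronwall-type bound $\int|Q_s|^2\lesssim1$ of Lemma~\ref{lem:gron} does not follow from concentration-compactness for the minimization problem, precisely because the branch is not known to consist of minimizers.
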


\begin{remarks} {\em 1.) Under the assumption that $Q = Q(|x|) > 0$ minimizes $J^{s,\alpha}(u)$, we remark that Theorem \ref{thm:unique} generalizes the striking result  by Amick and Toland in \cite{AmTo91} about uniqueness of positive solutions $Q=Q(|x|) > 0$ that satisfy
\begin{equation} \label{eq:BO}
(-\DD)^{\frac{1}{2}} Q + Q - Q^2 = 0 \quad \mbox{in $\R$}.
\end{equation} 
In fact, it was proven in \cite{AmTo91} that (apart from translations) the function $$Q(x) = \frac{2}{1+x^2}$$ is the only positive of $\eqref{eq:BO}$ in $H^{\frac{1}{2}}(\R)$. However, the remarkably elegant approach taken in \cite{AmTo91} makes essential use of  complex analysis (e.\,g., harmonic conjugates and Cauchy-Riemann equations) in combination with very specific identities derived from \eqref{eq:BO}. In particular, it appears to be a hopeless enterprise to try to generalize the arguments in \cite{AmTo91} to different powers of the  fractional Laplacians $(-\DD)^s$ with $s \neq 1/2$ or non-quadratic nonlinearities $f(Q) = Q^{\alpha+1}$ with $\alpha \neq 1$.

2.)  The uniqueness of optimizers for interpolation inequality \eqref{ineq:GN} follows directly from the ground state uniqueness and the strict rearrangement inequalities in \cite{FrSe08}; see also Section \ref{sec:unique} below.   
}
\end{remarks}

Let us briefly explain the strategy behind the proof of the ground state uniqueness result of Theorem \ref{thm:unique}. First, we fix $0 < s_0 < 1$ and $0 < \alpha < \amax(s_0)$ and suppose that $Q_0 = Q_0(|x|) > 0$ is a ground state solution to \eqref{eq:phi} with $s=s_0$. By the nondegeneracy result of Theorem \ref{thm:nondeg}, the associated linearized operator $L_+$ is invertible on $L^2_{\mathrm{even}}(\R) \perp \mathrm{ker} \, L_+$. Hence, by using an implicit function argument, we can construct around $(Q_0,1)$ a locally unique branch of solutions $(Q_s, \lambda_s)$ (in some suitable Banach space) which satisfy
\begin{equation}
 (-\DD)^s Q_s + \lambda_s Q_s - |Q_s|^\alpha Q_s = 0
 \end{equation}
where $s \in [s_0, s_0+\delta)$ and $\delta > 0$ being sufficiently small. Here the function $\lambda_s$ is introduced to ensure that the conservation law\footnote{Equivalently, we could keep $\lambda_s \equiv 1$ at the expense of varying $\int |Q_s|^{\alpha+2}$. However, it turns out that using $\lambda_s$ is convenient when we derive a-priori bounds for $Q_s$.}
$$
\int |Q_{s}|^{\alpha+2} = \int |Q_0|^{\alpha+2}
$$
holds along the branch $(Q_s, \lambda_s)$. Furthermore, we show that positivity is preserved along the branch, i.\,e., we have $Q_s = Q_s(|x|) >0$ for all $s \in [s_0, s_0 +\delta)$ thanks to $Q_0 = Q_0(|x|) > 0$ initially.  Note that, although we start from a ground state solution for $s=s_0$, it {\em cannot} be inferred that $Q_s$ (up to a rescaling) is also a ground state solution; i.\,e., global minimizers for $J^{s,\alpha}(u)$. Therefore, the global continuation of the branch $(Q_s,\lambda_s)$ to $s=1$, say, is far from obvious. 

However, as an essential step in the uniqueness proof, we show that the branch $(Q_s, \lambda_s)$ can be {\em indeed} continued for all $s \in [s_0,1)$. This global continuation will be based on the nondegeneracy result of Theorem \ref{thm:nondeg} in combination with the a-priori bounds 
$$
1 \lesssim \int | (-\DD)^{\frac{s}{2}} Q_s |^2 \lesssim 1 ,  \quad 1 \lesssim \int | Q_s |^2 \lesssim 1 , \quad 1 \lesssim \lambda_s \lesssim 1. 
$$
Here it turns out that establishing the upper bound $\int  | Q_s |^2 \lesssim 1$ is the most delicate step and thus it requires a careful analysis of the problem. In addition to a-priori regularity bounds, the strict positivity and monotonicity of $Q_s = Q_s(|x|) >0$ also enters in a significant way, since it allows us to derive the uniform decay estimate $Q_s(|x|) \lesssim |x|^{-1}$ for $|x| \gtrsim 1$. The latter fact then guarantees relative compactness of $\{ Q_s \}$ in certain $L^p$-norms. 

Once we have established that $(Q_s, \lambda_s)$ can be extended to $s=1$, we conclude that $Q_s \to Q_*$ (in some suitable sense) and $\lambda_s \to \lambda_*$ as $s \to 1$, where $Q_* = Q_*(|x|) >0$ and $\lambda_* > 0$ satisfy
$$
-\DD Q_* + \lambda_* Q_* - Q_*^{\alpha+1} = 0.
$$
For this limiting equation, it is well-known (by standard ODE techniques) that uniqueness of even and positive solutions $Q_*=Q_*(|x|) > 0$ holds true. Furthermore, by Pohozaev type identities and the conservation law for $\int |Q_s|^{\alpha+2}$ and the fact that $Q_0$ is a ground state, we deduce that the limit $\lambda_* = \lambda_*(s_0, \alpha)$  only depends on $s_0$ and $\alpha$. Hence, we can conclude that two different branches $(Q_s, \lambda_s)$ and $(\widetilde{Q}_s, \widetilde{\lambda}_s)$ (both starting from a ground state with $s=s_0$) must converge to the same limit $(Q_*, \lambda_*)$. By using the known nondegeneracy for the linearization around $(Q_*,\lambda_*)$, we can infer that the branches $(Q_s, \lambda_s)$ and $(\widetilde{Q}_s, \widetilde{\lambda}_s)$ must intersect for some $s \in [s_0,1)$ in contradiction to the local uniqueness of branches. This fact establishes  uniqueness of ground states for all $0 < s_0 < 1$ and $0 < \alpha < \amax(s_0)$, as stated in Theorem \ref{thm:unique}.

Finally, we mention that the second part of Theorem \ref{thm:unique} follows from the fact that every optimizer for \eqref{ineq:GN} must be equal to its symmetric-decreasing rearrangement modulo scaling and translation. The proof of this will be mainly based on strict rearrangement inequalities for $(-\DD)^s$.  

\subsection*{Extension of Main Results to Higher Dimensions} 
With regard to possible extensions to higher dimensions, we remark that most of the arguments presented here can be easily generalized to $d \geq 2$ dimensions. However, as the only notable exception, the proof of the oscillation estimate for the second eigenfunction $L_+$ (see Theorem \ref{thm:courant} below) hinges on the fact that $L_+$ acts on functions in $d=1$ dimension. How to obtain a similar oscillation estimate for radial eigenfunctions of $L_+$ in $d \geq 2$ dimensions remains the chief open problem. If this could be solved, the analogous nondegeneracy result of Theorem \ref{thm:nondeg} would readily follow for $d \geq 2$ dimensions. Moreover, the uniqueness proof of Theorem \ref{thm:unique} would allow for an straightforward adaption to $d \geq 2$ dimensions, since we have uniqueness and nondegeneracy of positive radial solutions $Q= Q(|x|) > 0$ in $H^1(\R^d)$ for the limiting equation
$$
-\DD Q + Q - Q^{\alpha+1} = 0  \quad \mbox{in $\R^d$}, 
$$ 
where $0 < \alpha < \infty$ for $d=2$ and $0 < \alpha < \frac{4}{d-2}$ for $d \geq 3$; see, e.\,g., \cite{Kw89}.

\subsection*{Plan of the Paper}
We organize this paper as follows. In Section \ref{sec:nodal}, we establish (as a technical key fact) a variational principle for fractional Schr\"odinger operators $H = (-\DD)^s + V$ in terms of a {\em local energy functional}. As a main consequence, we obtain a sharp bound on the number of sign changes for any second eigenfunction of $H$. Then in Section \ref{sec:nondeg}, we prove Theorem \ref{thm:nondeg}. Here we will make essential use of the main result from Section \ref{sec:nodal}. Finally, in Section \ref{sec:unique}, we establish the uniqueness of ground states as stated in Theorem \ref{thm:unique}. The proof will be based on the nondegeneracy result of Theorem \ref{thm:nondeg}, combined with an elaborate global continuation argument. The Appendix contains several technical results and proofs needed in this paper. 

\subsection*{Notation}
Throughout this paper, we employ standard notation for $L^p$-spaces and Sobolev spaces $H^s(\R)$ of order $s \in \R$. We use $\langle f, g \rangle = \int \overline{f} g $ to denote the inner product on $L^2(\R)$. (In fact, we will mostly deal with real-valued functions and hence complex conjugation is redundant.) Furthermore, we make the usual abuse of notation by writing both $f=f(x)$ and $f=f(|x|)$ whenever $f : \R \to \R$ is an even function. The (open) positive real axis will be denoted by $\R_+ = (0,\infty)$. Also, we use the standard notation $$X \lesssim  Y$$ to denote $X \leq C Y$ for some constant $C >0$ that only depends on some fixed quantities. Sometimes we write $X \lesssim_{a,b,\ldots} Y$ to underline that $C$ depends on the fixed quantities $a, b, \ldots$ etc.

\subsection*{Acknowledgments} R.\,F.~acknowledges support from NSF grant PHY-0652854. E.\,L.~was supported by a Steno fellowship from the Danish science research council, and he also gratefully acknowledges partial support from NSF grant DMS-0702492.


\section{An oscillation estimate for $H = (-\DD)^s + V$}
\label{sec:nodal}

This section serves as a preliminary discussion for Section \ref{sec:nondeg}, where we prove the nondegeneracy result of Theorem \ref{thm:nondeg}. More precisely, the present section deals with ``fractional'' Schr\"odinger operators
\begin{equation*}
H = (-\DD)^s + V 
\end{equation*}
acting on $L^2(\R)$. As our key technical result in this section, we prove a sharp bound on the number of sign changes for the {\em second eigenfunction(s)} of the nonlocal operator $H$, which will be formulated in Theorem \ref{thm:courant} below. The proof will be based on a variational characterization of the eigenvalues for $H$ in terms of a {\em local energy functional} and associated nodal domain bound \`a la Courant; see Corollary \ref{varprinc} and Theorem \ref{nodal} below.

Let us first introduce a suitable class of potentials $V$ for the fractional Schr\"odinger operators discussed here. In many respects (e.\,g., perturbation theory and properties of eigenfunctions), the following {\em ``Kato class''} (denoted by $K_s$) is a natural choice.
 
\begin{definition} \label{def:kato}
Let $0 < s  < 1$. We say that the potential $V \in K_s$ if and only if $V : \R \to \R$ is  measurable and satisfies
$$
\lim_{E \to \infty} \left \| ((-\DD)^s +E)^{-1} |V| \right \|_{L^\infty \to L^\infty} = 0. 
$$ 
\end{definition}

\begin{remarks}
{\em
1.) If $V \in K_s$, then $H=(-\Delta)^s +V$ defines a unique self-adjoint operator on $L^2(\R)$ with form domain $H^s(\R)$, and the corresponding heat  kernel $e^{-tH}$ maps $L^2(\R)$ into $L^\infty(\R) \cap C^0(\R)$ for any $t > 0$. In particular, any $L^2$-eigenfunction of $H$ is continuous and bounded. See also \cite{CaMaSi90} for equivalent definitions of $K_s$ and further background material.

2.) If $V \in K_s$, then $V$ is relatively form-bounded (with relative bound less than 1) with respect to $(-\DD)^s$. That is, for every $0 < \eps < 1$, there is a constant $C_\eps > 0$ such that
$$
\langle \psi, |V| \psi \rangle \leq \eps \langle \psi, (-\DD)^s \psi \rangle + C_\eps \langle \psi, \psi \rangle ,
$$
for all $\psi \in H^s(\R)$.  In fact, the latter condition is also sufficient for $V$ to be in $K_s$, provided that $C_\eps$ depends on $\eps$ in some explicit way. 

3.) In terms of $L^p$-spaces, we can derive the following useful criterion for real-valued $V$ to be in $K_s$. In fact, we have the following.
\begin{itemize}
\item If $0 < s \leq 1/2$ and $V \in L^p(\R)$ for some $p > 1/2s$, then $V \in K_s$.
\item If $1/2 < s < 1$ and $V \in L^p(\R)$ for some $p \geq 1$, then $V \in K_s$.
\end{itemize}
See Lemma \ref{lem:katoclass} for further details on this sufficient condition.
}
\end{remarks}

Let us now assume that $V \in K_s$ holds. Suppose that $\psi$ is an $L^2$-eigenfunction of $H= (-\DD)^s +V$.  Then, by the previous remark, we have that $\psi$ is a continuous and bounded function on $\R$. Note also that we can always assume that $\psi$ is real-valued, since $H=(-\DD)^s + V$ is a real operator (i.\,e., it preserves real and imaginary parts). In particular, we can define what it means that $\psi(x)$ changes its sign $N$ times.

\begin{definition}
Let $\psi \in C^0(\R)$ be real-valued and let $N \geq 1$ be an integer. We say that $\psi(x)$ changes its sign $N$ times if there exist points $x_1 < \ldots < x_{N+1}$ such that $\psi(x_i) \neq 0$ for $i = 1, \ldots,N+1$ and $\mathrm{sign} (\psi(x_i)) = -\mathrm{sign} (\psi(x_{i+1}))$ for $i=1, \ldots, N$.
\end{definition} 

\begin{remark}{\em 
For $\psi \in C^0(\R)$, we can define the {\em nodal domains} of $\psi(x)$ as the connected components of the open set $\{ x \in \R : \psi(x) \neq 0 \}$. If $\psi(x)$ cannot vanish to second order, then clearly the maximal number of sign changes of $\psi(x)$ equals $K-1$, where $K$ is the number of nodal domains of $\psi$. But in what follows, we prefer to work with the weaker notion of sign changes of $\psi(x)$.}
\end{remark}

\noindent
We are now ready to state the following main result of this section.

\begin{thm}{\bf (An oscillation estimate for $H$).} \label{thm:courant}
Let $0 < s < 1$, $V \in K_s$, and consider $H=(-\DD)^s +V$ acting on $L^2(\R)$. Suppose that $\lambda_2 < \inf \sigma_{\mathrm{ess}}(H)$ is the second eigenvalue of $H$ and let $\psi_2 \in H^s(\R) \cap C^0(\R)$ be a corresponding real-valued eigenfunction. Then $\psi_2 = \psi_2(x)$ changes its at most twice on $\R$.

In particular, if $\psi_2 = \psi_2(|x|)$ is an even eigenfunction, then $\psi_2(|x|)$ changes its sign at exactly once on the positive axis $\{ x > 0 \}$.
\end{thm}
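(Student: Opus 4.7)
The plan is to realize $H=(-\DD)^s+V$ variationally as a local (degenerate elliptic) problem on the upper halfplane $\R^2_+$ via the Caffarelli-Silvestre extension, apply a Courant-type nodal-domain bound in that setting, and then translate the bulk bound into a sign-change count on the boundary by a planarity argument.

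For $\psi\in H^s(\R)$, let $u\in H^1(\R^2_+;y^{1-2s})$ denote the unique weak solution of $\mathrm{div}(y^{1-2s}\nabla u)=0$ in $\R^2_+$ with trace $u(\cdot,0)=\psi$. Then $(-\DD)^s\psi=-\kappa_s\lim_{y\to 0^+}y^{1-2s}\partial_y u$ for some $\kappa_s>0$, and a routine integration by parts identifies $\langle\psi,H\psi\rangle$ with a positive multiple of $A(u)$. The min-max principle applied to the Rayleigh quotient $\mathcal{R}(u):=A(u)/\int_\R |u(x,0)|^2\,dx$ on the natural weighted Sobolev space on $\R^2_+$ yields a variational characterization of the eigenvalues $\lambda_n<\inf\sigma_{\mathrm{ess}}(H)$, with the eigenfunctions of $H$ being boundary traces of the minimizers. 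A standard Courant-type argument then bounds the nodal-domain count of $u_n$ in $\R^2_+$ by $n$: if there were $n+1$ nodal domains $D_1,\ldots,D_{n+1}$, the truncations $u_n\cdot\1_{D_j}$ would lie in the weighted Sobolev space, be linearly independent, and each attain $\mathcal{R}=\lambda_n$, forcing $\lambda_{n+1}\leq\lambda_n$ and yielding a contradiction via the usual strict-inequality refinement. Applied to $n=2$, the extension $u_2$ of $\psi_2$ has at most two nodal domains in $\R^2_+$.

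The remaining step, and what I expect to be the main technical obstacle, is to pass from this bulk bound to a sign-change bound on the boundary trace. Suppose for contradiction that $\psi_2$ changes sign at least three times, so there exist $x_1<x_2<x_3<x_4$ with $\psi_2(x_1),\psi_2(x_3)>0$ and $\psi_2(x_2),\psi_2(x_4)<0$ (or the reverse). By boundary regularity of the extension problem, $u_2$ is continuous up to $\R$ near each $x_i$, so small halfdisks around $(x_i,0)$ in $\R^2_+$ inherit the sign of $\psi_2(x_i)$. If only two nodal domains $D_+,D_-\subset\R^2_+$ were present, then $D_+$ would contain an arc $\gamma_+$ joining the halfdisks above $x_1$ and $x_3$; closing $\gamma_+$ through the boundary segment $[x_1,x_3]\times\{0\}$ yields a Jordan curve in $\overline{\R^2_+}$ separating $(x_2,0)$ from $(x_4,0)$. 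Any arc $\gamma_-\subset D_-$ joining the corresponding halfdisks would then have to cross $\gamma_+$, contradicting $D_+\cap D_-=\emptyset$. Hence $\psi_2$ changes sign at most twice on $\R$.

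For the ``in particular'' statement, when $\psi_2=\psi_2(|x|)$ is even the ground state $\psi_1$ of $H$ is strictly positive (by positivity of the heat semigroup $e^{-tH}$ together with Perron-Frobenius), so orthogonality $\langle\psi_1,\psi_2\rangle=0$ forces at least one sign change of $\psi_2$; evenness then produces matched sign changes at $\pm a$ for some $a>0$, giving at least two sign changes on $\R$. Combined with the upper bound of two just established, there are exactly two sign changes on $\R$ and hence exactly one on $\{x>0\}$.
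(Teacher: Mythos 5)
Your strategy is exactly the route the paper takes: the Caffarelli--Silvestre extension, a variational characterization of the eigenvalues of $H$ through the weighted Dirichlet functional on $\R^2_+$ (Corollary \ref{varprinc}), a Courant-type bound on the nodal domains of the extension (Theorem \ref{nodal}), and a Jordan-curve argument together with Perron--Frobenius positivity of the ground state to convert the bulk bound into the boundary sign-change count. Your topological step and the ``in particular'' part are carried out correctly and coincide with the paper's argument via Lemma \ref{lem:jordan}.

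The one place where your argument, as written, has a gap is the Courant step. First, the claim that each truncation $u_n\1_{D_j}$ attains $\mathcal{R}=\lambda_n$ tacitly assumes every nodal domain has a nontrivial boundary trace; unlike in the classical Courant setting, the $L^2$-normalization here lives only on $\partial\R^2_+$, so a nodal domain whose closure meets $\{y=0\}$ in a null set yields a truncation with vanishing denominator. The paper sidesteps per-domain Rayleigh quotients by testing the weak equation against a single combination $\left(\E_a\psi_n\right)\sum_{j}\gamma_j\1_{\overline{\Omega}_j}$, with the $\gamma_j$ chosen so that the trace is normalized and orthogonal to the lower eigenfunctions. Second, ``forcing $\lambda_{n+1}\leq\lambda_n$'' is not by itself a contradiction, since eigenvalues may be degenerate; the contradiction must come from the equality case, and in this weighted setting that is precisely where the technical work lies. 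Equality in the variational principle forces the trial function to be the canonical extension $\E_a f$ of a combination of $\lambda_n$-eigenfunctions --- this rests on the sharp trace inequality of Proposition \ref{lem:trace}, with equality if and only if $u=\E_a f$ --- and one then needs unique continuation for $\mathrm{div}(y^{1-2s}\nabla u)=0$, which is degenerate at $y=0$; the paper applies standard unique continuation on the strips $\{\delta<y<1/\delta\}$, where the operator is uniformly elliptic, to conclude $u\equiv 0$ and contradict the vanishing of $u$ on the extra nodal domain. If you replace your appeal to ``the usual strict-inequality refinement'' by this equality-case-plus-unique-continuation argument and handle the zero-trace nodal domains, your proof is complete and agrees with the paper's.
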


\begin{remarks}{\em 
1.) The reader who is mainly interested in applying this technical result may fast forward to Section \ref{sec:nondeg} at first reading.

2.) By Perron-Frobenius arguments (see Appendix \ref{app:kato}) the first eigenfunction $\psi_1=\psi(x) > 0$ of $H$ is always strictly positive. Hence, by the self-adjointness of $H$, we easily see that $\psi_2$ changes its sign at least once in order to satisfy the orthogonality condition $\langle \psi_1, \psi_2 \rangle = 0$. }
\end{remarks}

The proof of Theorem \ref{thm:courant} will be given at the end of this section. But first we have to establish some auxiliary facts in the following subsections. In particular, we derive the key variational principle of eigenvalues of $H$ in terms of a {\em local energy functional}, which we formulate in Corollary \ref{varprinc} below.


\subsection{Extension to $\R^2_+$ and a Sharp Trace Inequality}

We recall the known fact that the fractional Laplacian $(-\DD)^s$ on $\R^d$ can be expressed as the Dirichlet-to-Neumann operator for a suitable \emph{local} problem on the upper halfspace $\R^{d+1}_+ = \{ (x,y) : x \in \R^d, \; y  > 0\}$. See the recent work by Caffarelli-Silvestre \cite{CaSi07} for this fact. We also refer to the work of Graham-Zworski \cite{GrZw03}, where this observation occurred in a geometric context; see \cite{ChGo10} for a comparison and extension of  \cite{CaSi07} and \cite{GrZw03}.

We consider $d=1$ space dimension in the sequel. Let $0 < s < 1$ be given and set $a = 1-2s$. For a measurable function $f : \R \to \R$, we (first formally) define its extension $\E_a f : \R^2_+ \to \R$ as
\begin{equation}
\label{eq:econv}
(\E_{a} f)(x,y) := \int_{\R} y^{-1} P_{a}((x-x')/y) f(x') \, dx' ,
\end{equation}
where the convolution kernel $P_a : \R \to \R$ is given by
\begin{equation}
P_a(x) := \frac{\Gamma(\frac{2-a}{2})}{\pi^{1/2} \Gamma(\frac{1-a}{2})}  \frac{1}{(1+ x^2)^{\frac{2-a}{2}}}\,.
\end{equation}
Under suitable assumptions on $f$ it is known (see, e.\,g., \cite{CaSi07}) that $w = \E_a f$ solves the boundary value problem
\begin{equation} \label{eq:divw}
\left \{ \begin{array}{ll} \mathrm{div}(y^a \nabla w) = 0 &  \quad \mbox{in $\R^2_+$}, \\
w = f& \quad \mbox{on $\partial \R^2_+ = \R \times \{0\}$} .
\end{array} \right .
\end{equation}
Here the boundary condition $w=f$ is understood in some suitable sense, which will be formulated below. If $f$ is sufficiently regular, then we also have that
\begin{equation} \label{eq:boundary}
 \lim_{y \to 0} y^a \partial_y w(\cdot,y) = -c_a (-\DD)^s f,
\end{equation}
where $c_a > 0$ is some explicit constant; see Proposition \ref{isom} below.

To give a precise meaning to the statements mentioned above, we first recall the definition of the homogeneous Sobolev spaces $\dot H^s(\R)$ as the completion of $C^\infty_0(\R)$ with respect to the quadratic form $\|(-\Delta)^{s/2} f\|^2$. It follows from Hardy's inequality that this completion is a space of functions when $0<s<1/2$. On the other hand, if $1/2\leq s\leq 1$, this completion is not a space of functions but rather a space of equivalence classes of functions differing by an additive constant. (To see this phenomenom for $s=1$, consider a smoothened version of the sequence $f_n(x)=(1-|x|/n)_+$. Similar examples can be constructed for any $1/2\leq s < 1$.) For simplicity, we shall write elements of $\dot H^s(\R)$ still as functions, but with the understanding that for $s\geq 1/2$ equalities are understood modulo constants.

Next, for $-1 < a < 1$ given, we introduce the weighted homogeneous Sobolev space $\dot{\H}^{1,a}(\R^2_+)$ as the completion of $C^\infty_0(\overline{\R^2_+})$ with respect to the quadratic form
\begin{equation}
\| u \|_{\dot{\H}^{1,a}(\R^2_+)}^2 := \iint_{\R^2_+} |\nabla u|^2 y^{a} \, dx \, dy .
\end{equation}
Similarly as before, this completion is a space of functions for $0<a<1$ and a space of equivalence classes modulo constants for $-1<a\leq 0$. (These facts are known, but they are also consequences of our analysis below.) We note that if $a=1-2s$, then $0<a<1$ if and only if $0<s<1/2$. Moreover, by scaling, one sees that $\int_\R y^{-1} P_a(x/y) \,dx$ is a constant independent of $y$ (indeed, it is $1$, as we shall see below). Hence if $f$ is an equivalence class modulo constants, then so is $\E_a f$. We have the following basic result.

\begin{prop}\label{isom}
Let $0 < s < 1$, $f \in \dot{H}^{s}(\R)$, and define $a  =1 -2s$. Then $\E_a f \in \dot{\H}^{1,a}(\R^{2}_+)$ and we have that
\begin{equation}\label{eq:extension}
\iint_{\R^2_+} |\nabla \E_a f|^2 y^a \,dx\,dy
= c_a \| (-\DD)^{\frac{s}{2}} f \|^2_2,
\end{equation}
where
\begin{equation}
 \label{eq:calpha}
c_a := 2^a \frac{\Gamma((1+a)/2)}{\Gamma((1-a)/2)} \,.
\end{equation}
Moreover, the function $w = \E_a f$ is a weak solution to the partial differential equation
\begin{equation}
 \label{eq:eq}
\mathrm{div}(y^a \nabla w) = 0 \quad \mbox{in $\R^2_+$} \, .
\end{equation}
Finally, we have $w(\cdot,\epsilon) \to f$ in $\dot H^s(\R)$ and $-c_a^{-1} \epsilon^a \frac{\partial w}{\partial y}(\cdot,\epsilon) \to (-\Delta)^s f$ in $\dot H^{-s}(\R)$, both as $\epsilon\to 0$.
\end{prop}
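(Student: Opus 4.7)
The entire statement reduces to a Fourier-side computation in the tangential variable $x$. For $f \in C_0^\infty(\R)$ (a dense subclass of $\dot H^s(\R)$), I would take the Fourier transform of \eqref{eq:econv} in $x$ to obtain
\begin{equation*}
\widehat{\E_a f}(\xi, y) = g(y|\xi|)\, \hat f(\xi), \qquad g(t) := \hat{P}_a(t) = \frac{2^{1-s}}{\Gamma(s)}\, t^s K_s(t),
\end{equation*}
using the classical Fourier transform of $(1+x^2)^{-(1+2s)/2}$ in terms of the modified Bessel function $K_s$. Since $\mathrm{div}(y^a \nabla w) = 0$ transforms, for each fixed $\xi$, into the ODE $g''(t) + (a/t)\, g'(t) - g(t) = 0$ after substituting $t = y|\xi|$, and $g$ is the unique bounded solution normalized by $g(0) = 1$, the PDE \eqref{eq:eq} is satisfied classically on $\R_+^2$; the weak formulation for general $f \in \dot H^s$ then follows by density and the energy bound below.

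\textbf{Energy identity.} By Plancherel in $x$ and the change of variable $t = y|\xi|$,
\begin{equation*}
\iint_{\R_+^2} |\nabla w|^2\, y^a\, dx\, dy = \Bigl(\int_\R |\hat f(\xi)|^2\, |\xi|^{2s}\, d\xi\Bigr)\cdot \int_0^\infty \bigl(g(t)^2 + g'(t)^2\bigr) t^a\, dt,
\end{equation*}
where $1-a = 2s$ has been used. To evaluate the inner integral, multiply the ODE by $g\, t^a$ and integrate by parts on $[\epsilon, \infty)$; the exponential decay of $g$ and $g'$ at infinity (inherited from $K_s$) kills the upper boundary term, leaving
\begin{equation*}
\int_0^\infty \bigl(g^2 + (g')^2\bigr) t^a\, dt = -\lim_{t \to 0^+} g(t)\, g'(t)\, t^a.
\end{equation*}
The small-$t$ expansion $g(t) = 1 - \frac{2^{-2s}\Gamma(1-s)}{s\, \Gamma(s)}\, t^{2s} + \ldots$, derived from the standard series for $K_s$ together with the reflection formula $\Gamma(s)\Gamma(1-s) = \pi/\sin(\pi s)$, gives $g'(t)\, t^a \to -c_a$ as $t \to 0^+$, which evaluates the integral to exactly $c_a$ and yields \eqref{eq:extension}.

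\textbf{Boundary traces.} For the Dirichlet trace,
\begin{equation*}
\|w(\cdot, \epsilon) - f\|_{\dot H^s}^2 = \int_\R |g(\epsilon |\xi|) - 1|^2\, |\xi|^{2s}\, |\hat f(\xi)|^2\, d\xi \to 0
\end{equation*}
by dominated convergence, using $0 \leq g \leq 1$ (from monotonicity of $t \mapsto t^s K_s(t)$) and $g(0) = 1$. For the conormal derivative, write
\begin{equation*}
-c_a^{-1}\, \epsilon^a\, \partial_y \widehat{w}(\xi, \epsilon) = \phi(\epsilon|\xi|) \cdot |\xi|^{2s}\, \hat f(\xi), \qquad \phi(t) := -c_a^{-1}\, t^a g'(t),
\end{equation*}
and observe that $\phi$ is continuous and bounded on $[0, \infty)$ with $\phi(0) = 1$ (from the limit computed above) and $\phi(\infty) = 0$ (from the exponential decay of $g'$). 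Dominated convergence in $\dot H^{-s}$ then produces the stated convergence to $(-\Delta)^s f$. A final density argument extends \eqref{eq:extension} and the weak PDE from $C_0^\infty(\R)$ to all of $\dot H^s(\R)$; this also shows $\E_a f \in \dot{\H}^{1,a}(\R_+^2)$.

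\textbf{Main obstacle.} The only nontrivial step is pinning down the constant $c_a$: this requires correctly extracting the coefficient of $t^{2s}$ in the expansion of $g(t) = \frac{2^{1-s}}{\Gamma(s)} t^s K_s(t)$ and matching it with $2^a\, \Gamma((1+a)/2)/\Gamma((1-a)/2)$ via Euler's reflection formula and $\Gamma(1+s) = s\Gamma(s)$. Everything else is either standard Plancherel plus dominated convergence, or a direct consequence of $g$ being the unique bounded solution of the modified Bessel ODE.
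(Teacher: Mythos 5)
Your proof is correct and follows essentially the same route as the paper: pass to the Fourier side in $x$, identify the radial profile $m_a(r)=\frac{2^{1-s}}{\Gamma(s)}r^sK_s(r)$ (your $g$), use the modified Bessel ODE to verify the degenerate-elliptic PDE, and read off the boundary behaviour from the small-$r$ asymptotics $m_a(0)=1$, $r^a m_a'(r)\to -c_a$. The one place you deviate is the energy identity: the paper first establishes the two boundary trace limits in \eqref{eq:bv}, then obtains \eqref{eq:extension} by Green's identity in the half-plane, $\|(-\Delta)^{s/2}f\|^2 = -c_a^{-1}\lim_{\eps\to 0}\eps^a\int \overline{\partial_y w}\,w = c_a^{-1}\lim_{\eps\to 0}\iint_{\{y>\eps\}}|\nabla w|^2 y^a$, never explicitly computing a one-dimensional integral. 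You instead apply Plancherel and the substitution $t=y|\xi|$ to factor out $\int_0^\infty (g^2+(g')^2)t^a\,dt$, then evaluate this integral by multiplying the Bessel ODE by $g\,t^a$ and integrating by parts. Both work; yours is somewhat more explicit and self-contained (it does not route the energy identity through the boundary traces), while the paper's is slightly slicker in that it reuses \eqref{eq:bv} and so avoids tracking the exponential decay of $g,g'$ separately. Your constant check ($g(t)=1-\frac{2^{-2s}\Gamma(1-s)}{s\Gamma(s)}t^{2s}+\ldots$, giving $t^ag'(t)\to -2^{1-2s}\Gamma(1-s)/\Gamma(s)=-c_a$) is accurate, and the monotonicity claim $0\leq g\leq 1$ is legitimate (it follows from $\frac{d}{dt}(t^sK_s(t))=-t^sK_{s-1}(t)\leq 0$), though only boundedness is actually needed for dominated convergence.
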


\begin{proof}
We begin by writing
\begin{align*}
\|(-\Delta)^{\frac{s}{2}} f \|^2_2 = \int \overline{(-\Delta)^s f(x)} f(x)\,dx ,
\end{align*}
where the right side should be understood as the duality pairing between $\dot H^{-s}$ and $\dot H^s$. Our goal now is to express both functions (which are strictly speaking distributions) on the right side as boundary values of functions defined on the upper half-plane $\R^2_+$. We put $w=\E_a f$ and claim that
\begin{equation}
 \label{eq:bv}
w(\cdot,\epsilon) \to f \quad\text{in}\ \dot H^s(\R)
\quad\text{and}\quad
-c_a^{-1} \epsilon^a \frac{\partial w}{\partial y}(\cdot,\epsilon) \to (-\Delta)^s f \quad\text{in}\ \dot H^{-s}(\R)
\end{equation}
both as $\epsilon\to 0$.

These properties are easily seen in Fourier space. Indeed, using the computation \cite[11.4.44]{AbSt64} of the Fourier transform of $P_a$, we see that
\begin{equation}\label{eq:e}
(\E_a f)(x,y) = (2\pi)^{-1/2} \int \hat f(\xi) m_a(|\xi| y) e^{i\xi\cdot x}\,d\xi \,,
\end{equation}
where
$$
m_a(r) := \frac2{\Gamma((1-a)/2)} \left(\frac r2\right)^{(1-a)/2} K_{(1-a)/2}(r)
$$
and $K_{(1-a)/2}$ is a Bessel function of the third kind. From standard properties of these functions (see, e.g., \cite{AbSt64} again) we know that $m_a(0)=1$ and $0< m_a(r) \leq A_a$ for all $r\geq 0$. This means that
$$
m_a(|\xi| \epsilon) \to 1
\quad\text{as}\ \epsilon\to 0 \quad\text{and}\quad
0< m_a(|\xi| \epsilon) \leq A_a \,,
$$
and hence
$$
\int_{\R} |\xi|^{2s} \left| m_a(|\xi| \epsilon) - 1 \right|^2 |\hat f(\xi)|^2 \,d\xi \to 0 \quad \mbox{as} \quad \eps \to 0,
$$
by dominated convergence. This proves the first relation in \eqref{eq:bv}. In order to prove the second one, we note that 
$$
\frac{\partial w}{\partial y}(x,\epsilon) = (2\pi)^{-1/2} \int \hat f(\xi) |\xi| m_a'(|\xi| \epsilon) e^{i\xi\cdot x}\,d\xi.
$$
and that, again by properties of Bessel functions, $\lim_{r\to 0} r^{a}m_a'(r) = -c_a$ and $0< -r^a m_a(r) \leq B_a$ for all $r\geq 0$. This means that
\begin{equation*}
 \label{eq:ptwconv}
-c_a^{-1} \epsilon^a |\xi| m_a'(|\xi| \epsilon) \to |\xi|^{2s}
\quad\text{as}\ \epsilon\to 0,
\quad
0<c_a^{-1} \epsilon^a |\xi| m_a'(|\xi| \epsilon) \leq B_a |\xi|^{2s} \,,
\end{equation*}
which, again by dominated convergence, implies that
$$
\int_{\R} \left| c_a^{-1} \epsilon^{a} |\xi| m_a'(|\xi| \epsilon) + |\xi|^{2s} \right|^2 |\hat f(\xi)|^2 \frac{d\xi}{|\xi|^{2s}} \to 0  \quad \mbox{as} \quad \eps \to 0,
$$
and thus establishing the second relation in \eqref{eq:bv}.

Next, we prove that $w=\E_a f$ satisfies the partial differential equation \eqref{eq:eq}. This can either be shown directly by differentiating \eqref{eq:econv}, or using \eqref{eq:e} and a partial Fourier transform with respect to $x$. Indeed, the Bessel equation satisfied by $K_{(1-a)/2}$ is equivalent to $(r^a m_a')' = r^a m_a$, which is the same as \eqref{eq:eq} after Fourier transform and scaling.

With \eqref{eq:bv} and \eqref{eq:eq} at hand, it is now easy to show that \eqref{eq:extension} holds. Indeed, 
\begin{align*}
\|(-\Delta)^{\frac{s}{2}} f \|^2_2
& = \int \overline{(-\Delta)^{s} f(x)} f(x)\,dx  = -c_a^{-1} \lim_{\epsilon\to 0} \epsilon^a 
\int \overline{\frac{\partial w}{\partial y}(x,\epsilon)} w(x,\epsilon)\,dx \\
& = c_a^{-1} \lim_{\epsilon\to 0} 
\iint_{\{y>\epsilon\}} |\nabla w(x,y)|^2 y^a \,dx\,dy.
\end{align*}
This proves that $\E_a f$ belongs to $\mathcal H^{1,a}(\R^{2}_+)$ and satisfies \eqref{eq:extension}. The proof of Proposition \ref{isom} is now complete. \end{proof}

For $u\in C_0^\infty(\overline{\R_+^{2}})$, we denote by $T u(x) := u(x,0)$ its {\em trace.} As we shall see, the operator $T$ can be extended by continuity to  ${\dot{\H}}^{1,a}(\R^2_+)$, thanks to the next proposition which also yields a sharp trace inequality. In particular, this auxiliary result identifies the space of functions on $\R$ that arise as traces of functions in $\dot{\H}^{1,a}(\R^{2}_+)$ as the homogeneous Sobolev space $\dot H^{s}(\R)$ when $s= \frac{1-a}{2}$.

\begin{prop} \label{lem:trace}
Let $0 < s <1$ and $a = 1-2s$. Then there is a unique linear bounded operator $T : \dot{\H}^{1,a}(\R^2_+) \to \dot{H}^{s}(\R)$ such that $Tu(x) = u(x,0)$ for $u \in C^\infty_0(\overline{\R^{2}_+})$. Moreover, for any $u \in \dot{\H}^{1,a}(\R^2_+)$, the following inequality holds
\begin{equation}\label{eq:restriction}
\iint_{\R^2_+} |\nabla u|^2 y^a \,dx\,dy
\geq c_a \| (-\DD)^{\frac{s}{2}} T u\|^2_2 ,
\end{equation}
with the constant $c_a$ from \eqref{eq:calpha}. Here equality is attained if and only if $u=\E_a f$ for some $f\in \dot H^{s}(\R)$.
\end{prop}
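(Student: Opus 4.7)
The plan is to establish the trace inequality first on the dense subspace $C_0^\infty(\overline{\R_+^2}) \subset \dot{\H}^{1,a}(\R_+^2)$ and then extend both the operator $T$ and the inequality by continuity. The conceptual content is that, among all functions $u \in \dot{\H}^{1,a}(\R_+^2)$ with a prescribed trace $f = Tu$, the Caffarelli-Silvestre extension $\E_a f$ is the unique minimizer of the Dirichlet energy $\iint |\nabla u|^2 y^a\,dx\,dy$; together with Proposition \ref{isom}, this minimization property immediately delivers both the inequality and its equality case.

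Fix $u \in C_0^\infty(\overline{\R_+^2})$, set $f := u(\cdot,0)$, and decompose $u = \E_a f + v$ with $v := u - \E_a f$ satisfying $v(\cdot,0) \equiv 0$. The heart of the argument is the orthogonality
\begin{equation*}
\iint_{\R_+^2} \nabla(\E_a f) \cdot \nabla v\; y^a\,dx\,dy = 0,
\end{equation*}
which, combined with \eqref{eq:extension}, yields
\begin{equation*}
\iint_{\R_+^2} |\nabla u|^2 y^a\,dx\,dy = c_a \|(-\DD)^{s/2} f\|_2^2 + \iint_{\R_+^2} |\nabla v|^2 y^a\,dx\,dy \;\geq\; c_a \|(-\DD)^{s/2} f\|_2^2,
\end{equation*}
with equality if and only if $v \equiv 0$, i.e.\ $u = \E_a f$. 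To establish the orthogonality, I would integrate by parts in $y$: the bulk contribution produced by $\mathrm{div}(y^a \nabla \E_a f)$ vanishes by \eqref{eq:eq}, and the boundary contribution at $y = 0$ is to be interpreted as the $\dot H^s \times \dot H^{-s}$ duality pairing between $v(\cdot,\eps)$ (which tends to $0$ in $\dot H^s$, since both $u(\cdot,\eps)$ and $\E_a f(\cdot,\eps)$ converge to $f$) and $\eps^a \partial_y \E_a f(\cdot,\eps)$ (which tends to $-c_a (-\DD)^s f$ in $\dot H^{-s}$), both limits already furnished by Proposition \ref{isom}.

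Once the inequality is secured on $C_0^\infty(\overline{\R_+^2})$, it reads $\|Tu\|_{\dot H^s}^2 \leq c_a^{-1} \|u\|_{\dot{\H}^{1,a}}^2$, so $T$ is uniformly continuous on a dense subspace and extends uniquely to a bounded linear map $\dot{\H}^{1,a}(\R_+^2) \to \dot H^s(\R)$; inequality \eqref{eq:restriction} itself passes to the completion by continuity of both sides. Some care is needed when $-1 < a \leq 0$ (i.e.\ $s \geq 1/2$), where both spaces consist of equivalence classes modulo constants, but $T$ descends to the quotient because adding a constant to $u$ adds the same constant to $u(\cdot,0)$. The equality case on the completion then follows from the extended orthogonality: equality in \eqref{eq:restriction} forces $\iint_{\R_+^2} |\nabla(u - \E_a(Tu))|^2 y^a \,dx\,dy = 0$, hence $u = \E_a(Tu)$ as an element of $\dot{\H}^{1,a}$.

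I expect the main obstacle to lie in the integration-by-parts step: the weight $y^a$ degenerates at $y = 0$ while $\partial_y \E_a f$ blows up like $y^{-a}$, so the boundary integral is \emph{not} absolutely convergent as an ordinary integral and must be read through the duality in Proposition \ref{isom}. A safe way to make this rigorous is to cut off a strip $\{0 < y < \eps\}$, do the computation in the region $\{y > \eps\}$ where everything is smooth and decay at spatial infinity of $\E_a f$ (explicit from the Poisson kernel \eqref{eq:econv}) controls the lateral boundary, and then pass to the limit $\eps \to 0$ using the convergence statements in \eqref{eq:bv}, which are precisely the tools tailored to absorb the apparent singularity.
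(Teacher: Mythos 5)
Your proof is correct, and it takes a recognizably different route from the paper's, though both rest on the same analytic foundations (Proposition~\ref{isom}, the limits~\eqref{eq:bv}, the PDE~\eqref{eq:eq}, and integration by parts in the strip $\{y>\epsilon\}$). The paper argues by duality: for $u\in C_0^\infty(\overline{\R^2_+})$ and arbitrary $g\in\dot H^{-s}(\R)$, it sets $f=(-\Delta)^{-s}g$, $w=\E_a f$, integrates by parts to write $\int \overline{g}\,u(\cdot,0)\,dx$ as $c_a^{-1}\lim_{\epsilon\to 0}\iint_{\{y>\epsilon\}}\overline{\nabla w}\cdot\nabla u\,y^a$, applies Cauchy--Schwarz, and takes the supremum over $g$ to conclude $\|Tu\|_{\dot H^s}\leq c_a^{-1/2}\|u\|_{\dot{\H}^{1,a}}$; equality in Cauchy--Schwarz forces $\nabla u$ to be a constant multiple of $\nabla w$, hence $u$ is the extension of its trace. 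You instead invoke the Dirichlet principle: decompose $u=\E_a(Tu)+v$ with $v(\cdot,0)=0$, prove the cross term vanishes by exactly the same integration-by-parts-and-limit mechanism, and read off the Pythagorean identity $\|u\|_{\dot{\H}^{1,a}}^2=c_a\|Tu\|_{\dot H^s}^2+\|v\|_{\dot{\H}^{1,a}}^2$. The two proofs are essentially dual formulations of one another --- the optimizing $g$ in the paper's proof corresponds to your extension term --- but yours makes the equality case more transparent (it yields $v=0$ directly), while the paper's duality formulation is slightly more economical because it never needs to form the difference $u-\E_a(Tu)$ or verify that this difference lies in $\dot{\H}^{1,a}$ with zero trace. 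Both approaches then handle $a\leq 0$ by working modulo constants and both extend from $C_0^\infty(\overline{\R^2_+})$ by density. Your argument is sound; the one point worth spelling out carefully (as you flag) is that when you pass from the dense subspace to the completion, you need the orthogonality relation --- not just the inequality --- to extend continuously in order to settle the equality case for general $u\in\dot{\H}^{1,a}(\R^2_+)$.
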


\begin{remark}
{\em In \cite{Go09} inequality \eqref{eq:restriction} was derived by different arguments in the range $1/2 < s < 1$.}
\end{remark}

\begin{proof}
We use a similar argument as in the proof of Proposition \ref{isom}. Let $u\in C_0^\infty(\overline{\R_+^2})$ and $g\in \dot H^{-s}(\R)$ be given. Note that $f:=(-\Delta)^{-s} g\in \dot H^s(\R)$. By the same arguments as in the proof of Proposition \ref{isom}, the function $w:=\E_a f$ satisfies \eqref{eq:eq} and \eqref{eq:bv}. Hence we conclude
\begin{align*}
\int \overline{g(x)} u(x,0)\,dx
& = \int \overline{(-\Delta)^s f(x)} u(x,0)\,dx \\ & = -c_a^{-1} \lim_{\epsilon\to 0} \epsilon^a 
\int \overline{\frac{\partial w}{\partial y}(x,\epsilon)} u(x,\epsilon)\,dx \\
& = c_a^{-1} \lim_{\epsilon\to 0} 
\iint_{\{y>\epsilon\}} \overline{\nabla w(x,y)} \cdot \nabla u(x,y) y^a \,dx\,dy \,.
\end{align*}
Next, by the Cauchy-Schwarz inequality,
\begin{align}\label{eq:csu}
\left|\int \overline{g(x)} u(x,0)\,dx \right|
& \leq c_a^{-1} \left( \iint |\nabla w(x,y)|^2 y^a \,dx\,dy \right)^{1/2} \\
& \qquad 
\left( \iint |\nabla u(x,y)|^2 y^a \,dx\,dy \right)^{1/2} \nonumber \,.
\end{align}
We also note that, by Proposition \ref{isom},
\begin{align*}
\limsup_{\epsilon\to 0} \iint_{\{y>\epsilon\}} |\nabla w(x,y)|^2 y^a \,dx\,dy
 = c_a \| (-\Delta)^{\frac{s}{2}} f \|^2_2
 = c_a \|(-\Delta)^{-\frac{s}{2}} g \|^2_2 \,.
\end{align*}
Thus we have shown that
\begin{align*}
\left| \int \overline{g(x)} u(x,0)\,dx \right|
& \leq c_a^{-1/2} \|(-\Delta)^{-\frac{s}{2}} g\|_2
\left( \iint |\nabla u(x,y)|^2 y^a \,dx\,dy \right)^{1/2} \,,
\end{align*}
which, by duality, is the same as \eqref{eq:restriction} for $u\in C_0^\infty(\overline{\R^2_+})$. This allows us to extend the operator $T$ by continuity from $C_0^\infty(\overline{\R_+^{2}})$ to $\dot{\H}^{1,a}(\R^{2}_+)$, preserving the above inequality, whereas the uniqueness of $T$ follows from the density of $C_0^\infty(\overline{\R_+^{2}})$.

Moreover, the above argument is valid for any $u\in \dot{\H}^{1,a}(\R^{2}_+)$ and equality in \eqref{eq:csu} is attained if and only if $\nabla u$ is a constant multiple of $\nabla w$. Hence $u$ is a weak solution of equation \eqref{eq:eq}. By the unique solvability of this equation, $u$ is given as the $\E_a$-extension of its trace.
\end{proof}

For the rest of this section, we will adapt the following convention. 

\begin{convention}
For $u \in \dot{\H}^{1,a}(\R^2_+)$, we also write $u(x,0)$ to denote its trace $Tu(x)$.
\end{convention}

We conclude our preliminary discussion by introducing the `inhomogeneous' Sobolev space 
\begin{equation}
\H^{1,a}(\R^2_+) := \{ u \in \dot{\H}^{1,a}(\R^2_+) : u(x,0) \in L^2(\R) \},
\end{equation}
endowed with the norm $\| u \|_{\H^{1,a}(\R^2_+)} := \| u \|_{\dot{\H}^{1,a}(\R^2_+)} + \| T u \|_{L^2(\R)}$. Note that $\H^{1,a}(\R^2_+)$ is in fact a space of functions (even for $-1<a\leq 0$). This space will be of use in the next subsection. 

\subsection{Variational Characterization of Eigenvalues}

Using the results of the previous subsection, we now derive a variational principle for the first $n \geq 1$ eigenvalues of a fractional Schr\"odinger operator $H=(-\Delta)^s +V$ in terms of a {\em local energy functional}.  Apart from requiring that $V$ be in the class $K_s$,  we make the convenient assumption that the essential spectrum of $H = (-\DD)^s + V$ satisfies
\begin{equation*}
\boxed{\sigma_{\mathrm{ess}}(H) = [0,\infty) } 
\end{equation*}
This can be imposed without loss of generality, by replacing $V$ with $V + c$ where $c \in \R$ is some suitable constant.

\medskip
We are now ready to formulate our key variational principle for the eigenvalues of $H$ below the essential spectrum.
 
\begin{corollary}\label{varprinc}
Let $0<s<1$ and $V \in K_s$. Suppose that $n \geq 1$ is an integer and assume that $H=(-\Delta)^s +V$ has at least $n$ eigenvalues 
 $$\lambda_1\leq \ldots\leq\lambda_n < 0 .$$
Furthermore, let $M$ be an $(n-1)$-dimensional subspace of $L^2(\R)$ spanned by eigenfunctions corresponding to the eigenvalues $\lambda_j$ with $j =1,\ldots,n-1$. Then we have
\begin{align*}
 \lambda_n = \inf\left\{ c_a^{-1} \iint_{\R^2_+} |\nabla u|^2 y^a \,dx\,dy + \int_{\R} V(x) |u(x,0)|^2 \,dx : \ u\in \mathcal H^{1,a}(\R^{2}_+),  \right. \\
 \left. \int |u(x,0)|^2 \,dx =1 , \, u(\cdot,0) \perp M \right\} \,,
\end{align*}
where $a = 1-2s$ and $c_a > 0$ is the constant from \eqref{eq:calpha}. Moreover, the infimum is attained if and only if $u=\E_a f$, where $\| f\|_2^2 =1$ and $f\in M^\bot$ is a linear combination of eigenfunctions corresponding to the eigenvalue $\lambda_n$.
\end{corollary}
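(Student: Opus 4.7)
The proof will essentially be a three-line argument combining the two propositions of the previous subsection with the classical Rayleigh--Ritz / min-max principle. My plan is the following.

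Since $V \in K_s$, the operator $H = (-\Delta)^s + V$ is self-adjoint on $L^2(\R)$ with form domain $H^s(\R)$. The standard min-max principle then characterizes the $n$-th eigenvalue below the essential spectrum by
$$
 \lambda_n = \inf \left\{ \|(-\Delta)^{s/2} f\|_2^2 + \int_\R V |f|^2 \, dx : f \in H^s(\R),\ \|f\|_2 = 1,\ f \perp M \right\},
$$
and the infimum is attained precisely on the unit sphere of $M^\perp$ intersected with the $\lambda_n$-eigenspace of $H$. This will be my reference quantity, and the task reduces to matching it with the local functional.

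For the lower bound, I would pick any admissible $u \in \mathcal{H}^{1,a}(\R^2_+)$ with trace $f := u(\cdot,0)$ satisfying $\|f\|_2 = 1$ and $f \perp M$. Proposition \ref{lem:trace} gives $f \in \dot H^s(\R)$, hence $f \in H^s(\R)$, together with the sharp inequality $c_a^{-1} \iint |\nabla u|^2 y^a \, dx\,dy \geq \|(-\Delta)^{s/2} f\|_2^2$. Adding $\int V|f|^2$ to both sides and invoking the min-max bound yields
$$
 c_a^{-1} \iint_{\R^2_+} |\nabla u|^2 y^a\,dx\,dy + \int_\R V |f|^2\,dx \geq \langle f, H f\rangle \geq \lambda_n,
$$
which proves the $\geq$ half. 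For the matching upper bound, I take $f$ to be any $\lambda_n$-eigenfunction of $H$ with $f \perp M$ and $\|f\|_2 = 1$, and set $u = \E_a f$. Then $u \in \mathcal{H}^{1,a}(\R^2_+)$ (its trace is $f \in L^2$), and Proposition \ref{isom} gives equality $c_a^{-1}\iint |\nabla \E_a f|^2 y^a\,dx\,dy = \|(-\Delta)^{s/2} f\|_2^2$, so the local functional evaluates to exactly $\langle f, Hf\rangle = \lambda_n$.

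Finally, the characterization of minimizers will follow by tracking when equality holds in the two inequalities above. Equality in the trace inequality of Proposition \ref{lem:trace} forces $u = \E_a f$ for $f = Tu$ (this is exactly the rigidity clause of that proposition). Equality $\langle f, Hf\rangle = \lambda_n$ under $\|f\|_2 = 1$ and $f \perp M$ forces $f$ to lie in the $\lambda_n$-eigenspace of $H$, by the standard equality statement in the min-max principle. I do not expect any serious obstacle; all the heavy lifting — namely the identification of $(-\Delta)^s$ with a Dirichlet-to-Neumann map and the sharp trace inequality with its equality case — has already been carried out in Propositions \ref{isom} and \ref{lem:trace}.
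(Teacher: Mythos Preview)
Your proposal is correct and follows essentially the same approach as the paper's proof: both use the sharp trace inequality of Proposition~\ref{lem:trace} (together with its equality case) to reduce the local variational problem on $\R^2_+$ to the standard Rayleigh--Ritz characterization of $\lambda_n$ for $H$ on $H^s(\R)$. Your write-up is simply more explicit about separating the upper and lower bounds and about tracking the equality cases, but the argument is identical in substance.
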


\begin{proof}
By Lemma \ref{lem:trace}, the infimum on the right-hand side is bounded from below by
$$
\inf\left\{ \| (-\DD)^{\frac{s}{2}} f \|_2^2 + \int V |f|^2 \, dx :  f\in H^s(\R),\; \|f\|_2^2=1, \; f \perp M \right\} ,
$$
and equality is attained if and only if $u=\E_a f$. The assertion now follows from the usual variational characterization for the eigenvalues of $H=(-\DD)^s +V$.
\end{proof}


\subsection{Nodal Domain Bound on $\R^2_+$}

Let $V \in K_s$. Recall that we can always assume that any $L^2$-eigenfunction $\psi$ of $H$ is real-valued, since $H=(-\DD)^s + V$ is a real operator. Furthermore, by the  remark following Definition \ref{def:kato}, any such eigenfunction $\psi$ of $H$ is bounded and continuous. Likewise, the extension $\E_a \psi$ belongs to $C^0(\overline{\R^2_+})$ as well. Consider the set $N = \{ (x,y) \in \R^2_+ : (\E_a \psi)(x,y) = 0 \}$ which is a closed in $\R^2_+$. We define the {\em nodal domains} of $\E_a \psi$ as the connected components of the open set $\R^2_+ \setminus N$ in $\R^{2}_+$. We have the following bound on the number of nodal domains.
\begin{thm}\label{nodal}
Let $0<s<1$, $V \in K_s$, and define $a = 1-2s$. Suppose that $n \geq 1$ is an integer and assume that $H=(-\Delta)^s +V$ has at least $n$ eigenvalues 
 $$
 \lambda_1 \leq \ldots \leq \lambda_n < 0. $$
If $\psi_n \in H^s(\R) \cap C^0(\R)$ is a real eigenfunction of $H$ with eigenvalue $\lambda_n$, then its extension $\E_a \psi_n$ has at most $n$ nodal domains in $\R^2_+$.
\end{thm}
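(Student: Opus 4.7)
The proof plan follows the classical Courant nodal domain argument, adapted to the extension setup and driven by the variational principle of Corollary~\ref{varprinc}. Suppose for contradiction that $w := \E_a \psi_n$ has at least $n+1$ distinct nodal domains $\Omega_1, \ldots, \Omega_{n+1}$ in $\R^2_+$. For each $j$, I would first define the truncation $u_j := w \cdot \1_{\Omega_j}$. Since $w \in C^0(\overline{\R^2_+})$ vanishes on $\partial \Omega_j$, the weak gradient of $u_j$ is simply $\nabla w \cdot \1_{\Omega_j}$ and hence $u_j \in \H^{1,a}(\R^2_+)$. Moreover, a weak maximum principle applied to the degenerate elliptic equation $\mathrm{div}(y^a \nabla w) = 0$ rules out any nodal domain $\Omega_j$ fully contained in $\{y > 0\}$ (otherwise $w$ would vanish on the closed boundary of such a domain and satisfy the equation inside, forcing $w \equiv 0$ there). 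Thus every $\Omega_j$ meets $\R \times \{0\}$ in a set of positive one-dimensional measure, so the traces $u_j(\cdot, 0)$ are non-trivial and, by their pairwise disjoint supports, linearly independent in $L^2(\R)$.

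Next, using the interior equation $\mathrm{div}(y^a \nabla w) = 0$ together with the boundary identity $\lim_{y \to 0} -y^a \partial_y w = c_a(-\DD)^s \psi_n = c_a(\lambda_n - V)\psi_n$ (which follows from Proposition~\ref{isom} and $H\psi_n = \lambda_n \psi_n$), I would perform a localized integration by parts on $\Omega_j$ to obtain the key energy identity
$$
c_a^{-1} \iint_{\R^2_+} |\nabla u_j|^2 \, y^a \, dx \, dy + \int_{\R} V \, |u_j(x, 0)|^2 \, dx \;=\; \lambda_n \int_{\R} |u_j(x, 0)|^2 \, dx.
$$
Now consider linear combinations $u := \sum_{j=1}^{n+1} c_j u_j$. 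The $n-1$ orthogonality conditions $\langle u(\cdot, 0), \psi_k \rangle = 0$ for $k = 1, \ldots, n-1$ impose $n-1$ linear constraints on the $n+1$ unknowns $c_j$, so the solution space has dimension at least $2$; inside this subspace I can select a nonzero coefficient vector for which at least one coordinate $c_{j_0} = 0$. Because the $u_j$ have pairwise disjoint supports in $\overline{\R^2_+}$, the energies add:
\begin{align*}
c_a^{-1}\! \iint_{\R^2_+} \! |\nabla u|^2 y^a + \int_{\R} V |u(\cdot, 0)|^2 &= \sum_j |c_j|^2 \Bigl[ c_a^{-1}\! \iint_{\R^2_+} \! |\nabla u_j|^2 y^a + \int_{\R} V |u_j(\cdot, 0)|^2 \Bigr] \\
&= \lambda_n \sum_j |c_j|^2 \|u_j(\cdot, 0)\|_2^2 \;=\; \lambda_n \|u(\cdot, 0)\|_2^2.
\end{align*}

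Normalizing so that $\|u(\cdot, 0)\|_2 = 1$, the function $u$ becomes an admissible competitor in Corollary~\ref{varprinc} that attains the infimum $\lambda_n$. The equality clause of that corollary then forces $u = \E_a f$ for some eigenfunction $f$ of $H$ with eigenvalue $\lambda_n$. But by construction, $u$ vanishes identically on the open set $\Omega_{j_0} \subset \R^2_+$ (since $c_{j_0} = 0$). Since $\E_a f$ is a classical solution of the second-order elliptic equation $\DD u + (a/y)\, \partial_y u = 0$ with smooth coefficients in the interior $\{y > 0\}$, standard unique continuation (Aronszajn-type) forces $\E_a f \equiv 0$ on $\R^2_+$, whence $f \equiv 0$, contradicting $\|f\|_2 = 1$.

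The main technical obstacle is the rigorous justification of the integration by parts producing the energy identity for $u_j$: the nodal set of $w$ is a priori only closed in $\R^2_+$, the weight $y^a$ degenerates at the boundary, and the normal derivative $-y^a \partial_y w$ exists only in the weighted trace sense of Proposition~\ref{isom}. I would handle this via a cutoff-and-limit procedure, integrating by parts first on $\Omega_j \cap \{y > \eps\}$ (where $w$ is smooth and the operator is uniformly elliptic), noting that the interior boundary contributions from $\partial \Omega_j \cap \{y > \eps\}$ vanish because $w|_{\partial \Omega_j} = 0$, and then passing $\eps \to 0$ using the boundary convergence in $\dot H^{-s}$ established in Proposition~\ref{isom}.
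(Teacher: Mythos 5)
Your proposal is essentially the paper's own argument: truncate $\E_a\psi_n$ to its nodal domains (these truncations lie in $\H^{1,a}(\R^2_+)$), obtain the energy identity from the weak formulation of the eigenvalue equation on $\R^2_+$ (the paper derives it exactly as in the proof of Proposition \ref{lem:trace}, which is the clean way to do your $\eps$-cutoff integration by parts), choose coefficients meeting the $n-1$ orthogonality constraints so that equality is attained in Corollary \ref{varprinc}, and conclude via $u=\E_a f$, vanishing on an unused nodal domain, and interior unique continuation for $\mathrm{div}(y^a\nabla u)=0$. The only deviations are cosmetic (you combine $n+1$ truncations and force one coefficient to vanish, while the paper simply uses the first $n$; your maximum-principle aside about nodal domains touching the boundary is not needed for, and not part of, the paper's proof), so the approach matches.
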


\begin{proof}
We argue by contradiction. Assume $\E_a \psi_n$ has nodal domains $\Omega_1,\ldots,\Omega_m$ where $m\geq n+1$. We consider the sets $K_j:=\{x\in\R: (x,0)\in \overline\Omega_j\}$ for $j=1,\ldots,m$, where $\overline \Omega_j$ is the closure of $\Omega_j$ in $\overline{\R_+^2}$. Since $\E_a \psi_n$ is continuous up to the boundary and $\psi_n\not\equiv 0$, we may assume that $K_1\neq\emptyset$. Furthermore, let $M$ be an $n-1$ dimensional subspace of $L^2(\R)$ spanned by eigenfunctions corresponding to the eigenvalues $\lambda_j$ where $j=1,\ldots,n-1$. Next, we define the function
$$
u = \left( \E_a\psi_n \right) \sum_{j=1}^{n} \gamma_j \1_{\overline{\Omega}_j} .
$$
Note that we can choose the constants $\gamma_j \in \R$, with $j=1,\ldots,n$, in such a way that $u(\cdot, 0) \perp  M$ and $\|u(\cdot,0)\|_2=1$. Using standard facts about Sobolev spaces one can show that $u \in \H^{1,a}(\R^2_+)$ and $\nabla u= \left( \nabla\E_a\psi_n \right) \sum_{j=1}^{n} \gamma_j \1_{\overline{\Omega}_j}$. 

By the same argument as in the proof of Proposition \ref{lem:trace}, the function $\E_a\psi_n$ satisfies
\begin{equation*}
c_a^{-1} \iint_{\R^2_+} \overline{\nabla v}\cdot (\nabla \E_a\psi_n) y^a \,dx\,dy + \int_\R V(x) \overline{v(x,0)} \psi_n(x) \,dx = \lambda_n  \int_\R \overline{v(x,0)} \psi_n(x) \,dx
\end{equation*}
for any $v\in \H^{1,a}(\R^2_+)$. Since $u$ belongs to $\H^{1,a}(\R^2_+)$, we can apply this to $v=u$ and obtain
\begin{align*}
& c_a^{-1} \iint_{\R^2_+} |\nabla u|^2 y^a \,dx\,dy + \int_{\R} V(x) |u(x,0)|^2 \,dx \\
& \quad = \lambda_n \sum_{j=1}^n |\gamma_j|^2 \int_{K_j} |\psi_n|^2 \,dx
= \lambda_n \| u(\cdot,0)\|^2_2 = \lambda_n \,.
\end{align*}
Thus we conclude that equality holds in the variational principle in Corollary~\ref{varprinc}. Hence $u=\E_a f$ where $f\in M^\bot$ is a linear combination of eigenfunctions corresponding to the eigenvalue $\lambda_n$. In particular, the non-trivial function $u$ satisfies equation \eqref{eq:eq}. Note that we have $u\equiv 0$ on the open non-empty set $\Omega_{n+1} \subset \R^2_+$. However, we can deduce by unique continuation of solutions for the elliptic equation \eqref{eq:eq} that $u \equiv 0$ on the upper halfplane $\R^2_+$. Indeed, consider the open connected set $D_{\delta} = \{ (x,y) \in \R^2_{+} : \delta  < y < 1/\delta \}$ where $0 <  \delta < 1$ is a fixed constant. Clearly, the differential operator $L$ on $D_\delta$ with $Lu = \mathrm{div} \, (y^a \nabla u)$ has smooth coefficients, and moreover $L$ is uniformly elliptic on $D_{\delta}$ (with bounds depending on $\delta$). By choosing $\delta_0 > 0$ now sufficiently small such that $\Omega_{n+1} \cap D_{\delta_0} \neq \emptyset$, we deduce by standard unique continuation for $L u=0$ that $u \equiv 0$ on the connected set $D_{\delta_0}$. We can repeat this argument for any set $D_{\delta} \subset \R^2_+$ with $0 < \delta \leq \delta_0$ to conclude that $u \equiv 0$ on $\R^2_+$ itself. But this is a contradiction. The proof of Theorem \ref{nodal} is now complete. \end{proof}

\subsection{Proof of Theorem \ref{thm:courant}}

We argue by contradiction. Suppose that $\psi_2 : \R \to \R$ changes its sign at least three times on $\R$. Thus, after replacing $\psi_2$ by $-\psi_2$ if necessary, there exist points $x_1 < x_2 < x_3 < x_4$ on the real line such that 
$$\mbox{$\psi(x_i) > 0$ for $i=1,3$}, \quad \mbox{$\psi(x_i) < 0$ for $i=2,4$.}$$

Next, we consider the extension $\E_a \psi_2$ on $\R^2_+$. Since $(\E_a \psi_2)(x_1, 0) > 0$ and $(\E_a \psi_2)(x_2, 0) < 0$ and by continuity of $\E_a \psi_2$ up to the boundary $\partial \R_+^2$, the function $\E_a \psi_2$ has at least two nodal domains in $\R_+^2$. But, in view of Theorem \ref{nodal}, we conclude that $\E_a \psi_2$ has exactly two nodal domains in $\R^2_+$, which we denote by $\Omega_+$ and $\Omega_-$ in the following.

Now, by continuity of $\E_{a}\psi_2(x,y)$ again, we deduce that
$$
\mbox{$(x_i,\eps) \in \Omega_+$ for $i=1,3$} \quad \mbox{and} \quad \mbox{$(x_i, \eps) \in \Omega_-$ for $i=2,4$,}
$$
for all $0 < \eps  \leq \eps_0$, where $\eps_0 > 0$ is some sufficiently small constant. Note that the connected open sets $\Omega_{\pm} \subset \R^2_+$ must be arcwise connected. Thus we conclude that there exist two simple continuous curves $\gamma_+, \gamma_- \in C^0([0,1]; \overline{\R^2_+})$ with the following properties.
\begin{itemize}
\item $\gamma_+(0) = (x_1,0), \gamma_+(1) = (x_3,0)$ and $\gamma_+(t) \in \Omega_+$ for $t \in (0,1)$.
\item $\gamma_-(0) = (x_2,0), \gamma_-(1) = (x_4,0)$ and $\gamma_-(t) \in \Omega_-$ for $t \in (0,1)$.
\end{itemize}
By Lemma \ref{lem:jordan} (based on a basic topological arguments) we deduce that $\gamma_+$ and $\gamma_-$ must intersect in the upper halfplane $\R^2_+$. But this contradicts $\Omega_+ \cap \Omega_- = \emptyset$.  Hence the function $\psi_2 : \R \to \R$ changes its sign at most twice on $\R$.

Finally, if $\psi_2 = \psi_2(|x|)$ is even, then clearly $\psi_2$ can change its sign on $\{ x > 0 \}$ at most once, since otherwise $\psi_2$ would change it sign at least four times on $\R$, contradicting the result just derived. By the remark following Theorem \ref{thm:courant}, we deduce that $\psi_2$ must change its sign at least once on $\{x > 0 \}$.  This completes the proof of Theorem \ref{thm:courant}.  \hfill $\blacksquare$


\section{Nondegeneracy of Ground States}

\label{sec:nondeg}

This section is devoted to the proof of Theorem \ref{thm:nondeg}. That is, we show that (local) nonnegative minimizers $Q(x) \geq 0$ for the functional $J^{s,\alpha}(u)$ defined \eqref{eq:Jdef} have a nondegenerate linearization. In fact, we shall prove a slightly more general result formulated as Lemma \ref{lem:kernel} below.  

Let $0 < s < 1$ and $0 < \alpha < \amax(s)$ be fixed throughout this section. Suppose that $Q \in H^s(\R)$ with $Q \not \equiv 0$ is a nonnegative solution $Q =Q(x) \geq 0$ of
\begin{equation} \label{eq:phiLp}
(-\DD)^s Q + \lambda Q - Q^{\alpha+1}  = 0 ,
\end{equation}
with some positive constant $\lambda > 0$. Note that, by Lemma \ref{lem:symm} which is based on the method of moving planes, we have that $Q(x) = \widetilde{Q}(|x-x_0|) > 0$ for some $x_0 \in \R$, where $\widetilde{Q} = \widetilde{Q}(|x|) > 0$ is an even and positive function strictly decreasing in $|x|$. Moreover, a simple rescaling argument shows that we could assume $\lambda = 1$ without loss of generality. But for the sake of later purpose, we will keep $\lambda > 0$ explicit here. 

Associated with $Q \in H^s(\R)$, we define the linearized operator
\begin{equation} \label{eq:defLL}
L_+ = (-\DD)^s + \lambda - (\alpha+1) Q^\alpha
\end{equation}
acting on $L^2(\R)$. We record the following (partly immediate) facts about $L_+$.
\begin{itemize}
\item $Q^\alpha \in K_s$, i.\,e., the potential $V=Q^\alpha$ belongs to the `Kato-class' with respect to $(-\DD)^s$. This follows from the remark following Definition \ref{def:kato} and Sobolev inequalities. In particular, any $L^2$-eigenfunction of $L_+$ is continuous and bounded.
\item $L_+$ is a self-adjoint operator on $L^2(\R)$ with quadratic form domain $H^{s}(\R)$ and operator domain $H^{2s}(\R)$.
\item The essential spectrum is $\sigma_{\mathrm{ess}}(L_+) = [\lambda, \infty)$.
\item The Morse index of $L_+$ satisfies $\cN_-(L_+) \geq 1$. Recall that $\cN_-(L_+)$ is defined as the number of strictly negative eigenvalues, i.\,e.,
$$
\cN_- (L_+) = \# \{ e < 0 : \mbox{$e$ is eigenvalue of $L_+$ acting on $L^2(\R)$} \},
$$
where multiplicities of eigenvalues are taken into account. To see that indeed $\cN_-(L_+)\geq 1$, we just use $\langle Q, L_+ Q \rangle = -\alpha \| Q \|_{\alpha+2}^{\alpha+2} < 0$ by \eqref{eq:phiLp}. Thus, by the min-max principle, the operator $L_+$ has at least one negative eigenvalue.

\item We always have that $L_+ Q'=0$ and thus $\mathrm{span}\, \{ Q' \} \subseteq \mathrm{ker} \, L_+$. This follows from differentiating \eqref{eq:phiLp} with respect to $x$.
\item The lowest eigenvalue $e_0 = \inf \sigma \, (L_+)$ is simple and the corresponding eigenfunction $\psi_0=\psi_0(x) > 0$ strictly positive; see Lemma \ref{lem:perron}. 
\end{itemize}

\noindent

To formulate the main result of this section, we now suppose that $Q = Q(|x|)$ is an even function. We introduce the {\em Morse index of $L_+$ in the sector of even functions} by defining
$$
\boxed{
\cN_{-,\mathrm{even}} (L_+) := \# \{ e < 0 : \mbox{$e$ is eigenvalue of $L_+$ restricted on $L^2_{\mathrm{even}}(\R)$} \} }
$$
where multiplicities of eigenvalues are taken into account. Note that $\langle Q, L_+ Q \rangle < 0$ with $Q=Q(|x|)$ even. Hence we deduce the general lower bound $\cN_{-, \mathrm{even}}(L_+) \geq 1$ from the min-max principle.

The key nondegeneracy result of this section is now as follows.

\begin{lemma} \label{lem:kernel}
Let $Q \in H^s(\R)$ be an even and positive solution of \eqref{eq:phiLp} with some $\lambda > 0$. Consider its associated linearized operator $L_+$ acting on $L^2(\R)$ and assume its Morse index in the even sector satisfies $\cN_{-,\mathrm{even}} (L_+) = 1$. Then we have
$$\mathrm{ker} \, L_+ = \mathrm{span} \, \{ Q ' \}.$$
\end{lemma}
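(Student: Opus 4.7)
The plan is to use that $L_+$ is reduced by the orthogonal decomposition $L^2(\R)=L^2_{\mathrm{even}}(\R)\oplus L^2_{\mathrm{odd}}(\R)$ and to treat the two sectors separately. Since $Q'$ is odd, the two claims to be proved are: (a) $\ker L_+\cap L^2_{\mathrm{even}}(\R)=\{0\}$; and (b) $\ker L_+\cap L^2_{\mathrm{odd}}(\R)=\mathrm{span}\{Q'\}$.

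For (b), I would argue by Perron--Frobenius after identifying $L^2_{\mathrm{odd}}(\R)$ with $L^2(0,\infty)$ via odd extension. In these coordinates the free semigroup $e^{-t(-\DD)^s}$ acts by the kernel $k_t(x,y)=p_s(x-y,t)-p_s(x+y,t)$, where $p_s(\cdot,t)$ is the symmetric, strictly radially decreasing heat kernel of $(-\DD)^s$ on $\R$. Since $|x-y|<x+y$ whenever $x,y>0$, the kernel $k_t$ is strictly positive, so the restricted free semigroup is positivity improving; adding the bounded perturbation $V=\lambda-(\alpha+1)Q^\alpha$ via a Trotter argument preserves this. Standard Perron--Frobenius then gives that the lowest eigenvalue of $L_+|_{\mathrm{odd}}$ is simple with a strictly positive ground state. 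Since $-Q'\geq 0$ on $(0,\infty)$ is a nonzero odd eigenfunction of $L_+$ at eigenvalue $0$, it cannot be orthogonal to any strictly positive function, so it must itself be (a positive multiple of) the ground state. This makes $0$ a simple eigenvalue of $L_+|_{\mathrm{odd}}$ and proves (b).

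For (a), I would argue by contradiction: suppose $v\in\ker L_+\cap L^2_{\mathrm{even}}$ with $v\not\equiv 0$. Combining $\cN_{-,\mathrm{even}}(L_+)=1$ with step (b), the second eigenvalue of $L_+$ on $L^2(\R)$ is $\lambda_2=0<\lambda=\inf\sigma_{\mathrm{ess}}(L_+)$, so Theorem \ref{thm:courant} applies to $v$ and forces it to change sign exactly once on $\{x>0\}$, at some point $r>0$. Two Pohozaev--type orthogonalities now come into play. First, $L_+Q=-\alpha Q^{\alpha+1}$ gives $\langle v,Q^{\alpha+1}\rangle=\tfrac{-1}{\alpha}\langle v,L_+Q\rangle=0$. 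Second, differentiating the scaling family $Q_\mu(x)=\mu^{1/\alpha}Q(\mu^{1/(2s)}x)$, which solves $(-\DD)^sQ_\mu+\lambda\mu\,Q_\mu=Q_\mu^{\alpha+1}$, at $\mu=1$ yields $L_+W=-\lambda Q$ with the even function $W=\tfrac{Q}{\alpha}+\tfrac{xQ'}{2s}$, and hence $\langle v,Q\rangle=0$.

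These two orthogonalities are then contradicted by a barrier function: set $g(x):=Q(x)^{\alpha+1}-Q(r)^\alpha Q(x)=Q(x)(Q(x)^\alpha-Q(r)^\alpha)$. By strict radial monotonicity of $Q$, one has $g>0$ on $|x|<r$ and $g<0$ on $|x|>r$, so $g$ and $v$ either agree or disagree in sign on both components, making $gv$ of one strict sign almost everywhere on $\R$. This contradicts $\int gv=\langle v,Q^{\alpha+1}\rangle-Q(r)^\alpha\langle v,Q\rangle=0$, completing (a). The main obstacle is the even-sector step: the sign-change bound of Theorem \ref{thm:courant} is by itself insufficient, and one must pair it with the two independent scaling/linearization identities together with the strict monotonicity of $Q$ in order to construct the barrier $g$. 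The odd-sector argument, by contrast, is routine once the strict positivity of the half-line kernel $k_t$ is noted.
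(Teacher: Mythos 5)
Your argument is correct and coincides in essence with the paper's proof: the odd sector is handled by the same half-line Perron--Frobenius argument (the paper's Lemma \ref{lem:perron2}), and the even sector by the same combination of the oscillation bound of Theorem \ref{thm:courant} with the identities $L_+Q=-\alpha Q^{\alpha+1}$ and $L_+\bigl(\tfrac{2s}{\alpha}Q+xQ'\bigr)=-2s\lambda Q$ together with the barrier $Q^{\alpha+1}-Q(r_*)^{\alpha}Q$, following Chang--Gustafson--Nakanishi--Tsai. The only deviations are cosmetic (normalization of the scaling generator, keeping $\lambda$ general rather than rescaling to $\lambda=1$, and applying Theorem \ref{thm:courant} to the shifted operator), and like the paper you rely on the strict monotonicity of $Q$ supplied by Lemma \ref{lem:symm}.
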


\begin{proof}
By rescaling $Q(x) \mapsto \lambda^{\frac{1}{\alpha}} Q(\lambda^\frac{1}{2s}x)$ (and likewise any element in  $\mathrm{ker} \, L_+$ transforms accordingly), we can assume that $\lambda = 1$ holds in \eqref{eq:phiLp}. 

Next, we consider the orthogonal decomposition $L^2(\R) = L^2_{\mathrm{even}}(\R) \oplus L^2_{\mathrm{odd}}(\R)$. Since $Q = Q(|x|)$ is an even function, we note that $L_+$ leaves the subspaces $L^2_{\mathrm{even}}(\R)$ and $L^2_{\mathrm{odd}}(\R)$ invariant. We treat these subspaces separately as follows.

Recall that $Q' \in L^2_{\mathrm{odd}}(\R)$ satisfies $L_+ Q' = 0$. Moreover, by Lemma \ref{lem:symm}, we have that $Q'(x) < 0$ for $x > 0$.  In view of Lemma \ref{lem:perron2} applied to $L_+$, we conclude that $Q'$ is (up to a sign) the unique ground state eigenfunction of $L_+$ restricted to $L^2_{\mathrm{odd}}(\R)$. Hence we see that $ \mathrm{ker} \, L_+ |_{L^2_\mathrm{odd}}=\mathrm{span} \, \{ Q ' \} .$

It remains to show that 
$$\mathrm{ker} \, L_+ |_{L^2_\mathrm{even}} = \{ 0\}.$$ 
To prove this claim, we argue by contradiction. Suppose there exists $v \in L^2_{\mathrm{even}}(\R)$ with $v \not \equiv 0$ such that $L_+ v = 0$. Note that $v$ is continuous and bounded due to the remarks above. Also, since $L_+$ is a real operator, we can assume that $v$ is real-valued. Next, by assumption, we have $\cN_{-, \mathrm{even}}  (L_+) = 1$, and hence $v$ must be an eigenfunction of $L_+$ corresponding to its second eigenvalue. Let $\psi_1$ be the first eigenfunction to $L_+$. By Lemma \ref{lem:perron2}, we have that $\psi_1(x) > 0$ is strictly positive. By the orthogonality $v \perp \psi_1$, we deduce that $v$ must change its sign at least once on $\R$. Moreover since $v$ is even, this implies that $v$ changes its sign at least twice on $\R$. But, by applying Theorem \ref{thm:courant} to $H= (-\DD)^s - (\alpha+1) Q^\alpha$, we deduce that $v$ changes its sign exactly twice on $\R$. Since $v = v(|x|)$ is even, this implies that there exists $r_* > 0$ such that the following holds (after multiplying $v$ by $-1$ if necessary):
\begin{equation} \label{eq:vnodal}
\mbox{$v(|x|) \geq 0$ for $|x| \leq r_*$}, \quad \mbox{$v(|x|) \leq 0$ for $|x| > r_*$}, 
\end{equation}
where $v \not \equiv 0$ on both sets $\{ |x| \leq r_* \}$ and $\{ |x| > r_*\}$. Note that we have the same estimate on the number of sign changes of $v=v(|x|)$ on the halfline $(0,\infty)$, as if Sturm-Liouville oscillation theory for ODE were applicable. Therefore we can now proceed along the lines of \cite{ChGuNaTs07}, where a simple nondegeneracy proof for NLS ground states was given based on a result from Sturm-Liouville theory. Adapting this argument to our setting, we notice that a calculation yields
\begin{equation}
L_+ Q = -\alpha Q^{\alpha+1} \quad \mbox{and} \quad L_+ R = -2sQ,
\label{eq:Gust}
\end{equation}
where
\begin{equation}
R := \frac{d}{d \beta} \Big |_{\beta=1} \beta^{\frac{2s}{\alpha}} Q(\beta \cdot)  = \frac{2s}{\alpha} Q + x Q'.
\end{equation}
Note that $R \in L^2(\R)$ due to the decay estimate for $Q$ stated in Proposition \ref{prop:Q}. By bootstrapping the equation satisfies by $L_+$, we further deduce that $R \in H^{2s+1}(\R)$ and, in particular, we see that $R$ is in the domain of $L_+$. Since $L_+$ is self-adjoint and $v \in \mathrm{ker} \, L_+$, we obtain from \eqref{eq:Gust} that 
$$\langle Q^{\alpha+1}, v \rangle = \langle Q,v \rangle = 0.$$
Next, we consider the even function $f \in \mathrm{ran} \, L_+$ given by
$$
f := Q^{\alpha+1} - \mu Q = Q (Q^{\alpha} - \mu) ,
$$
where $\mu \in \R$ is some parameter. Note that $\langle v,f \rangle = 0$ for all $\mu \in \R$. Now we choose $\mu = ( Q(r_*))^\alpha$ with $r_* >0$ from \eqref{eq:vnodal}. Since $Q=Q(|x|) > 0$ is positive and strictly decreasing in $|x|$, we deduce that
\begin{equation} \label{eq:fprop}
\mbox{$f(|x|) >0$ for $|x| < r_*$},  \quad \mbox{$f(|x|) < 0$ for $|x| > r_*$}.
\end{equation}
Combining now \eqref{eq:fprop} and \eqref{eq:vnodal}, we see that $v f \geq 0$ with $v f\not \equiv 0$. Hence $\langle v,f \rangle > 0$. But this violates the orthogonality condition $\langle v,f\rangle =0$. Therefore, the operator $L_+$ does not have a zero eigenfunction in $L^2_{\mathrm{even}}(\R)$. The proof of Lemma \ref{lem:kernel} is now complete. \end{proof}

\subsection*{Proof of Theorem \ref{thm:nondeg}}
Suppose that $Q=Q(x) > 0$ is a positive solution to \eqref{eq:phiLp} with $\lambda=1$. By Lemma \ref{lem:symm} and translational invariance, we can assume that $Q=Q(|x|) > 0$ is even. 

Let $L_+ = (-\DD)^s + 1 -(\alpha+1) Q^\alpha$ be the associated linearized operator. To apply Lemma \ref{lem:kernel}, it suffices to show that $\cN_{-, \mathrm{even}} (L_+) = 1$ holds. Indeed, we recall that, by assumption in Theorem \ref{thm:nondeg}, the function $Q$ is a local minimizer of $J^{s,\alpha}(u)$. Therefore, we have the second order condition
 \begin{equation}
 \label{eq:stable}
\frac{d^2}{d \varepsilon^2}  \Big |_{\varepsilon =0} J^{s,\alpha}(Q + \varepsilon \eta ) \geq 0 \quad \mbox{for all $\eta \in C^\infty_0(\R)$.}
\end{equation}
We claim that this implies the upper bound $\cN_{-,\mathrm{even}}(L_+) \leq \cN_{-} (L_+) \leq 1$. To estimate the Morse index, we can adapt an argument for ground states of classical nonlinear Schr\"odinger equations (see \cite{ChGuNaTs07,We85}) to  our setting as follows.

By Lemma \ref{lem:pohoidentities} below (with $\lambda_s=1$ and $s=s_0$), we obtain the following ``Pohozaev identities'' of the form
$$
\frac{\int | (-\DD)^\frac{s}{2} Q|^2}{2a} =  \frac{\int |Q|^2}{ 2b } = \frac{ \int |Q|^{\alpha+2}}{\alpha+2} =: k,
$$
where $a = \frac{\alpha}{4s} > 0$, $b = \frac{\alpha}{4s}(2s-1) +1 > 0$, and $k > 0$ are positive constants. Then an elementary (but tedious) calculation shows  that inequality \eqref{eq:stable} is equivalent to
\begin{equation*}
k \langle \eta, L_+ \eta \rangle \geq \frac{1}{a} | \langle \eta, (-\DD)^s Q \rangle | ^2 + \frac{1}{ b}  \left| \langle \eta, Q\rangle \right|^2 -  \left| \langle \eta, Q^{\alpha+1}\rangle \right|^2.
\end{equation*}
Therefore $\langle \eta, L_+ \eta\rangle \geq 0$ if $\eta \perp Q^{\alpha+1}$. Hence, by the min-max principle, we obtain that $\cN_-  (L_+) \leq 1$ and hence $\cN_{-,\mathrm{even}}(L_+) \leq \cN_-  (L_+) \leq 1$.

On the other hand, we recall that we always have  that $\cN_{-,\mathrm{even}} (L_+) \geq 1$, as remarked in the beginning of this section. Thus we conclude that $\cN_{-,\mathrm{even}}  (L_+) = 1$ holds, whence it follows that $\mathrm{ker} \, L_+ = \mathrm{span} \, \{ Q' \}$  thanks to Lemma \ref{lem:kernel}. The proof of Theorem \ref{thm:nondeg} is now complete. \hfill $\blacksquare$


\section{Uniqueness of Ground States}

\label{sec:unique}

In this section we prove Theorem \ref{thm:unique}. Our strategy is based on the nondegeneracy result from Section \ref{sec:nondeg} and an implicit function argument, combined with a global continuation argument. For the reader's orientation, we first give a brief outline of this section as follows. 

In Subsection \ref{subsec:localbranch}, we fix $0 < s_0 < 1$ and $0 < \alpha < \amax(s_0)$. By an implicit function argument, we construct (in some suitable Banach space of even functions) a locally unique branch $(Q_s, \lambda_s)$ parameterized by $s$ close to $s_0$ and satisfying
$$
(-\DD)^s Q_s + \lambda_s Q_s - |Q_s|^{\alpha} Q_s = 0.
$$
Here the starting point of the branch $(Q_0,\lambda_0)=(Q_{s=s_0}, \lambda_{s=s_0})$ is assumed to satisfy some spectral condition; see Proposition \ref{prop:local}.

Then, in Subsection \ref{subsec:global}, we show (as a main result of this section) that the local branch $(Q_s, \lambda_s)$ can be indeed globaly continued to $s=1$, provided that $(Q_0, \lambda_0)$ satisfies some explicit conditions, such as positivity $Q_0 = Q_0(|x|) > 0$; see Proposition \ref{prop:global}. The crucial and delicate point that allows us to extend to $s=1$ is based on suitable a-priori bounds on regularity and spatial decay for $(Q_s,\lambda_s)$ of the form
$$
 1 \lesssim \int | (-\DD)^{\frac{s}{2}} Q_s|^2 \lesssim 1, \quad 1 \lesssim  \int |Q_s|^2 \lesssim 1, \quad 1 \lesssim \lambda_s \lesssim 1,
$$
in combination with a uniform pointwise decay estimate $Q_s(|x|) \lesssim |x|^{-1}$ for $|x| \gtrsim 1$. The derivation of all these bounds will cover most of this section and it requires a careful study of the nonlinear problem.

Finally, with help of Propositions \ref{prop:local} and \ref{prop:global}, we are able to prove Theorem \ref{thm:unique} in Subsection \ref{subsec:unique} below. That is, we show that the branch $(Q_s, \lambda_s)$ starting from a ground state $(Q_0, \lambda_0=1)$ exists and is globally unique; in particular, the assumption of having another branch starting from a different ground state $(\widetilde{Q}_0, \widetilde{\lambda}_0=1)$ leads to a contradiction. This will follow from the global uniqueness and nondegeneracy for the limiting problem when $s=1$, i.\,e.,
$$
-\Delta Q_* + \lambda_* Q_* - Q_*^{\alpha+1} = 0.
$$






\subsection{Construction of a Local Branch}

\label{subsec:localbranch}

We start with some preliminaries. Let $0 < s < 1$ and $0 < \alpha < \amax(s)$ be given. We consider solutions $(Q,\lambda)$ with $Q \in L^2(\R) \cap L^{\alpha+2}(\R)$ and $\lambda \in \Rplus$ satisfying 
\begin{equation} \label{eq:phis}
(-\DD)^s Q + \lambda Q - |Q|^{\alpha} Q = 0.
\end{equation}
In fact, by a bootstrap argument, we see that $Q \in H^{2s+1}(\R)$ holds. Nevertheless, it turns out to be convenient to work in the space $L^2(\R) \cap L^{\alpha+2}(\R)$, which is independent of $s$. Since we are interested in real-valued and even solutions only, it is convenient to define the (real) Banach space
\begin{equation} \label{def:X}
\boxed{ 
X^{\alpha} := \big \{ f \in L^2(\R) \cap L^{\alpha+2}(\R) : \mbox{$f$ is even and real-valued} \big \} }
\end{equation}
which we equip with the norm $$\| f \|_{X^{\alpha}} := \| f \|_2 + \| f \|_{\alpha+2}.$$  Recall that we make the standard abuse of notation by writing both $f(x)$ and $f(|x|)$ whenever $f$ is an even function on $\R$. 

As a next step, we will construct a local branch of solutions $(Q_s, \lambda_s) \in X^\alpha \times \Rplus$ of \eqref{eq:phis}, which is parametrized by $s$ in some small interval. To this end, we introduce the following assumption.

\begin{assumption} \label{ass:branch}
Let $0 < s < 1$, $0 < \alpha < \amax(s)$. Suppose that $(Q, \lambda) \in X^\alpha \times \Rplus$ satisfies equation \eqref{eq:phis}. We assume that the linearized operator 
$$L_+ = (-\DD)^{s} + \lambda - (\alpha+1) |Q|^{\alpha}$$ 
acting on $L^2(\R)$ has a bounded inverse $L_+^{-1}$ on $L^2_{\mathrm{even}}(\R)$.
\end{assumption}

\begin{remarks}
{\em 1.) We emphasize that we do not require $Q \in X^\alpha$ to be positive here.

2.) Since $Q \in X^\alpha$ is even and hence $Q \perp Q'$ in $L^2(\R)$, the bounded inverse $L_+^{-1}$ exists on $L^2_{\mathrm{even}}(\R)$, provided that $\ker L_+ = \mathrm{span} \{Q'\}$ holds for $L_+$ acting on $L^2(\R)$.

3.) By Sobolev inequalities, the invertibility of $L_+^{-1}$ on $L^2_{\mathrm{even}}(\R)$ implies that $L_+^{-1}$ exists on $X^\alpha$ as well.}
\end{remarks}

As a next step, we establish existence and local uniqueness of a branch $(Q_s, \lambda_s)$ for \eqref{eq:phis} around a solution $(Q_0, \lambda_0)$ that satisfies Assumption \ref{ass:branch}.
 
\begin{prop} \label{prop:local}
Let $0 < s_0 < 1$ and $0 < \alpha < \amax(s_0)$ be given. Suppose that $(Q_0, \lambda_0) \in X^\alpha \times \Rplus$ satisfies Assumption \ref{ass:branch} with $s=s_0$ and $\lambda=\lambda_0$. Then, for some $\delta > 0$, there exists a map $(Q, \lambda) \in C^1 \big ( I; X^\alpha \times \Rplus)$ defined on the interval $I = [s_0, s_0 +\delta)$ such that the following holds, where we denote $(Q_s, \lambda_s) = (Q(s), \lambda(s))$ in the sequel.
\begin{enumerate}
\item[(i)] $(Q_s, \lambda_s)$ solves equation \eqref{eq:phis} with $\lambda=\lambda_s$ for all $s \in I$.
\item[(ii)] There exists $\eps > 0$ such that $(Q_s , \lambda_s)$ is the unique solution of \eqref{eq:phis} for $s \in I$ in the neighborhood $\{ (Q,\lambda) \in X^\alpha \times \Rplus : \|  Q - Q_0 \|_X + |\lambda - \lambda_o | < \eps \}$. In particular, we have that $(Q_{s_0}, \lambda_{s_0}) = (Q_0, \lambda_0)$ holds.
\item[(iii)]  For all $s \in I$, we have
$$
\int | Q_s |^{\alpha+2} = \int | Q_{0} |^{\alpha+2}.
$$
\end{enumerate}

\end{prop}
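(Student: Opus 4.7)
The plan is to apply the implicit function theorem (IFT) in Banach spaces, with the normalization constraint (iii) built into the map from the start. This is essential because the bare equation \eqref{eq:phis} has a one-parameter scaling symmetry $Q \mapsto aQ(b\,\cdot)$, and without a scaling-breaking condition the linearization cannot be invertible. Fixing $\int|Q|^{\alpha+2}$ precisely removes that direction.

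\textbf{Set-up.} I would consider, near $(s_0,Q_0,\lambda_0)$, the map
\begin{equation*}
F(s,Q,\lambda) := \Bigl( Q - ((-\DD)^{s}+\lambda)^{-1}\bigl(|Q|^{\alpha}Q\bigr),\; \int_\R |Q|^{\alpha+2} - \int_\R |Q_0|^{\alpha+2} \Bigr)
\end{equation*}
with arguments in $[s_0,1)\times X^{\alpha}\times\Rplus$ (possibly intersected with a high-regularity subspace to keep the spaces $s$-independent, see the obstacle below) and values in $X^{\alpha}\times\R$. One has $F(s_0,Q_0,\lambda_0)=0$ by hypothesis. That $F$ is $C^{1}$ follows from three standard facts: the resolvent $R_{s,\lambda}:=((-\DD)^{s}+\lambda)^{-1}$ has Fourier symbol $(|\xi|^{2s}+\lambda)^{-1}$, which depends smoothly on $(s,\lambda)$ by dominated convergence on the symbol's derivatives; the nonlinearity $Q\mapsto |Q|^{\alpha}Q$ is locally $C^{1}$ on $X^{\alpha}$ thanks to the subcriticality $\alpha<\amax(s_0)$; and the smoothing of $R_{s,\lambda}$ brings the output back into $X^{\alpha}$.

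\textbf{Invertibility of the linearization.} Using $R_{s_0,\lambda_0}(|Q_0|^{\alpha}Q_0)=Q_0$ (which is \eqref{eq:phis} at $(Q_0,\lambda_0)$ rewritten), a short calculation gives
\begin{equation*}
D_{(Q,\lambda)}F(s_0,Q_0,\lambda_0)[\eta,\mu]=\bigl(R_{s_0,\lambda_0}[L_{+}\eta+\mu Q_0],\;(\alpha+2)\langle|Q_0|^{\alpha}Q_0,\eta\rangle\bigr),
\end{equation*}
where $L_{+}=(-\DD)^{s_0}+\lambda_0-(\alpha+1)|Q_0|^{\alpha}$. Since $R_{s_0,\lambda_0}$ is injective, vanishing of the first component is equivalent to $L_{+}\eta=-\mu Q_0$; by Assumption~\ref{ass:branch} and the evenness of $Q_0$, this gives $\eta=-\mu L_{+}^{-1}Q_0$. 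Subtracting \eqref{eq:phis} from $(\alpha+1)$ times itself yields the crucial identity
\begin{equation*}
L_{+}Q_0=-\alpha\,|Q_0|^{\alpha}Q_0,
\end{equation*}
hence $L_{+}^{-1}(|Q_0|^{\alpha}Q_0)=-\tfrac{1}{\alpha}Q_0$, and by self-adjointness
\begin{equation*}
\langle|Q_0|^{\alpha}Q_0,L_{+}^{-1}Q_0\rangle=\langle L_{+}^{-1}(|Q_0|^{\alpha}Q_0),Q_0\rangle=-\tfrac{1}{\alpha}\|Q_0\|_{2}^{2}\neq 0.
\end{equation*}
Substituting into the scalar equation forces $\mu=0$, hence $\eta=0$, giving injectivity. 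Surjectivity is read off the same nonzero scalar in reverse: given target data, the scalar equation for $\mu$ has nonzero coefficient, after which $\eta$ is recovered by one application of $L_{+}^{-1}$.

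\textbf{Conclusion and main obstacle.} The IFT then delivers a $C^{1}$ branch $(Q_s,\lambda_s)$ on some $[s_0,s_0+\delta)$, giving (i) and the local uniqueness (ii); property (iii) is exactly the vanishing of the second component of $F$. The one genuinely technical point I expect to require care is verifying $C^{1}$ dependence of $R_{s,\lambda}$ on $s$ \emph{as an operator between $s$-independent Banach spaces}: morally this is immediate from the multiplier $(|\xi|^{2s}+\lambda)^{-1}$, but to make $(-\DD)^{s}$ map into a fixed target for all $s\in[s_0,s_0+\delta)$ one likely has to replace $X^{\alpha}$ by its intersection with a fixed high-regularity space (e.g.\ $H^{2s_0+2\delta}_{\mathrm{even}}$), which is legitimate since solutions of \eqref{eq:phis} lie in $H^{2s+1}$ by Proposition~\ref{prop:Q}(iii). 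By contrast the spectral-algebraic core of the argument is forced by the single identity $L_{+}Q_0=-\alpha|Q_0|^{\alpha}Q_0$, exactly as in the standard nondegeneracy computation for NLS.
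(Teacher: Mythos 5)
Your proposal is correct and follows essentially the same route as the paper: the implicit function theorem applied to the very same map $F(Q,\lambda,s)=\bigl(Q-((-\DD)^s+\lambda)^{-1}(|Q|^\alpha Q),\ \|Q\|_{\alpha+2}^{\alpha+2}-\|Q_0\|_{\alpha+2}^{\alpha+2}\bigr)$, with invertibility of the linearization reduced via Assumption \ref{ass:branch} to the nonvanishing of $\langle |Q_0|^\alpha Q_0, L_+^{-1}Q_0\rangle=-\tfrac1\alpha\|Q_0\|_2^2$ through the identity $L_+Q_0=-\alpha|Q_0|^\alpha Q_0$. The one point you hedge on, $C^1$ dependence on $s$, is handled in the paper (Lemma \ref{lem:F_C1}) directly in $X^\alpha$ by symbol bounds on $\partial_s(|\xi|^{2s}+\lambda)^{-1}$, since in the resolvent formulation $(-\DD)^s$ never acts on $Q$ itself; so no high-regularity subspace is needed, and avoiding it is preferable because restricting the space would weaken the uniqueness neighborhood in part (ii).
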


\begin{remarks}{\em 
1.) Introducing the function $\lambda_s=\lambda(s)$ ensures that the above ``conservation law'' for $\int |Q_s|^{\alpha+2}$ holds. The use of this fact will become evident further below when we derive a-priori bounds.

2.) Note that $Q_s \in H^{2s+1}(\R)$ by standard regularity arguments; see Section \ref{app:regularity} below. But since the parameter $s$ changes, it is convenient to make use of the $s$-independent space $X^\alpha$ defined above.}
\end{remarks}

\begin{proof}
We use an implicit function argument as follows. First, we observe that \eqref{eq:phis} can be written as 
$$
Q =  \frac{1}{(-\DD)^s + \lambda} |Q|^{\alpha} Q .
$$
For some small constant $\delta > 0$ chosen below, we consider the mapping
\begin{equation} \label{def:F}
F : X^\alpha \times \Rplus \times [s_0, s_0 + \delta ) \to X^\alpha \times \R,
\end{equation}
which we define as
\begin{equation}
F(Q, \lambda, s) := \left [\begin{array}{l} \displaystyle Q - \frac{1}{(-\DD)^s + \lambda} |Q|^\alpha Q \\[1ex] \| Q \|_{{\alpha+2}}^{\alpha+2} - \| Q_{s_0} \|_{{\alpha+2}}^{\alpha+2} \end{array} \right ] .
\end{equation}
As shown in Lemma \ref{lem:F_C1}, the map $F$ is well-defined and $C^1$. Also, by construction, we have that $F(Q_{s_0}, \lambda_0, s_0) = 0$. To invoke an implicit function argument, we have to show the invertibility of the Fr\'echet deriviative of $F$ with respect $(Q,\lambda)$ at $(Q_0,\lambda_0,s_0)$, which we establish next.

First, we note that the Fr\'echet derivative of $F$ with respect to $(Q, \lambda)$ is given by
\begin{equation}
\partial_{(Q,\lambda)} F  = \left [ \begin{array}{cc}\displaystyle 1 - \frac{1}{(-\DD)^s+\lambda} (\alpha+1) |Q|^\alpha & \displaystyle  \frac{1}{( (-\DD)^s + \lambda )^2} |Q|^{\alpha} Q \\
(\alpha+2) \langle |Q|^\alpha Q, \cdot \rangle  & 0 \end{array} \right ],
\end{equation}
Here $\langle f, \cdot \rangle$ denotes the map $g \mapsto \langle f, g \rangle$. See also Lemma \ref{lem:F_C1} and its proof.

Now, we claim that the inverse $(\partial_{(Q,\lambda)} F)^{-1}$ exists at $(Q_{s_0}, \lambda_0, s_0)$. Hence we have to show that, for every $f \in X^\alpha$ and $\beta \in \R$ given, there is a unique solution $(\eta,\gamma) \in X^\alpha \times \R$ of the system
\begin{equation} \label{eq:system1}
(1+K) \eta + \gamma g = f ,
\end{equation}
\begin{equation} \label{eq:system2}
(\alpha+2) \langle |Q_{s_0}|^{\alpha} Q_{s_0}, \eta \rangle = \beta ,
\end{equation}
where we set
\begin{equation}
K :=  -\frac{1}{(-\DD)^{s_0} + \lambda_0} (\alpha+1) | Q_{s_0}| ^{\alpha}, \quad g :=  \frac{1}{((-\DD)^{s_0} + \lambda_0)^2} |Q_{s_0}|^{\alpha} Q_{s_0} .
\end{equation} 

Next, we note that $K$ is a compact operator on $L^2_{\mathrm{even}}(\R)$. Moreover, we see that $-1 \not \in \sigma(K)$ due to Assumption \ref{ass:branch}. Indeed, assume on the contrary that $-1$ is in the spectrum $\sigma(K)$ for $K$ acting on $L^2_{\mathrm{even}}(\R)$. Then the self-adjoint operator 
\begin{equation} \label{def:Limpl}
L_+ = (-\DD)^{s_0} + \lambda_0 -  (\alpha+1) |Q_{s_0}|^{\alpha}
\end{equation}
would have an even eigenfunction $v \in L^2_{\mathrm{even}}(\R)$ such that $L_+ v = 0$, which contradicts Assumption \ref{ass:branch}.

Thus the operator $(1+K)$ is invertible on $L^2_{\mathrm{even}}(\R)$. Moreover, since $K : X^\alpha \to X^\alpha$ holds (see proof of Lemma \ref{lem:F_C1} for details), we deduce that $(1+K)^{-1}$ exists on the space $X^\alpha$ as well. Hence we can solve \eqref{eq:system1} uniquely for $\eta \in X^\alpha$ to find that
\begin{equation}
\eta = (1+K)^{-1} f - \gamma (1+K)^{-1} g . 
\end{equation}
Plugging this into \eqref{eq:system2} yields
\begin{equation} \label{eq:gamma}
(\alpha+2) \langle | Q_{s_0} |^{\alpha} Q_{s_0}, (1+K)^{-1} g \rangle \gamma = - \beta + (\alpha+2) \langle |Q_{s_0}|^{\alpha} Q_{s_0}, (1+K)^{-1} f \rangle .
\end{equation}
To deduce unique solvability for $\gamma \in \R$, it remains to show that the coefficient in front of $\gamma$ does not vanish. To see this, we observe the identity
\begin{equation}
(1+K)^{-1} = L_+^{-1} \circ \big ((-\Delta)^{s_0} + \lambda_0 \big ) ,
\end{equation}
with $L_+$ given by \eqref{def:Limpl}. Using this identity together with $L_+ Q_{s_0} = -\alpha |Q_{s_0}|^{\alpha} Q_{s_0}$ and equation \eqref{eq:phis} satisfied by $Q_{s_0}$, we now easily deduce that
\begin{equation}
\big \langle |Q_{s_0}|^{\alpha} Q_{s_0}, (1+K)^{-1} g \big \rangle = -\frac{1}{\alpha} \int | Q_{s_0}Ê|^2 \neq 0,
\end{equation}
This completes the proof that $\partial_{(Q,\lambda)} F$ is invertible at $(Q_{s_0}, \lambda_0, s_0)$. By applying the implicit function theorem to the map $F$ at $(Q_{s_0}, \lambda_0, s_0)$, we derive the assertions (i)--(iii) provided that $\delta > 0$ is sufficiently small.
 
The proof of Proposition \ref{prop:local} is now complete.\end{proof}

\subsection{A-priori Bounds and Global Continuation} 

\label{subsec:global}

Let $0 < s_0 < 1$ and $0 < \alpha < \amax(s_0)$ be given. Throughout this subsection, we suppose that $(Q_s, \lambda_s) \in C^1(I; X^\alpha \times \Rplus)$ is a local branch defined for $I = [s_0, s_0 +\delta)$, as provided by Proposition \ref{prop:local}. 

Now, we consider the corresponding {\em maximal extension} of the branch $(Q_s, \lambda_s)$ for $s \in [s_0, s_*)$, where $s_*$ is given by
\begin{align*} \label{def:smax}
s_* := \sup \Big \{ s_0 < \tilde{s} < 1 & : \mbox{$(Q_s,\lambda_s) \in C^1([s_0,\tilde{s}); X^\alpha \times \Rplus)$ given by Proposition \ref{prop:local}} \nonumber \\
& \quad  \mbox{and $(Q_s,\lambda_s)$ satisfies Assumption \ref{ass:branch} for $s \in [s_0,\tilde{s})$} \Big \} .  
\end{align*}
Clearly, we have $s_* \leq 1$ and our goal will be to show that $s_* = 1$ holds under some suitable assumption on $(Q_{s_0}, \lambda_0)$. 

Since we need to derive suitable a-priori bounds for the maximal branch $(Q_s, \lambda_s)$, we introduce the convenient notation 
$$
\boxed{ a \lesssim b }
$$
whenever $a \leq Cb$ holds, where $C>0$ is some constant that only depends on the fixed quantities $s_0, \alpha$ and $(Q_0, \lambda_0)$. As usual, the constant $C > 0$ is allowed to change from inequality to inequality.

As an initial step to derive a-priori bounds, we start with the following {\em ``Pohozaev identities''} satisfied by $Q_s$.

\begin{lemma} \label{lem:pohoidentities}
For all $s \in [s_0, s_*)$, the following identities hold:
$$
\frac{\lambda_s}{2} \int |Q_s|^2 =  \frac{a_s}{\alpha+2} \int |Q_s|^{\alpha+2} , \quad 
 \frac{1}{2} \int |(-\DD)^\frac{s}{2} Q_s |^2 =   \frac{b_s}{\alpha+2} \int |Q_s|^{\alpha+2} ,
$$
where $a_s = \frac{\alpha}{4s}(2s-1)+1$ and $b_s = \frac{\alpha}{4s}$.
\end{lemma}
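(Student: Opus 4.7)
The plan is to combine the usual $L^2$ test identity with a Pohozaev identity obtained from scaling, and then solve the resulting $2\times 2$ linear system.

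\emph{Step 1 (Testing against $Q_s$).} Because $Q_s \in H^{2s+1}(\R)$ (by the bootstrap argument alluded to before Proposition~\ref{prop:local}, analogous to Proposition~\ref{prop:Q}(iii)), we may pair equation \eqref{eq:phis} with $Q_s$ in $L^2(\R)$. This is perfectly justified since $(-\DD)^s Q_s \in L^2$, $Q_s \in L^2$, and $|Q_s|^{\alpha+2} \in L^1$ by Sobolev embedding. Using $\langle (-\DD)^s Q_s, Q_s\rangle = \int |(-\DD)^{s/2} Q_s|^2$, this yields
\begin{equation}\label{eq:plan1}
\int |(-\DD)^{s/2} Q_s|^2 + \lambda_s \int |Q_s|^2 = \int |Q_s|^{\alpha+2}.
\end{equation}

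\emph{Step 2 (Scaling identity).} Consider the action
$$
S(u) = \tfrac12 \int |(-\DD)^{s/2} u|^2 + \tfrac{\lambda_s}{2} \int |u|^2 - \tfrac{1}{\alpha+2} \int |u|^{\alpha+2},
$$
whose Euler-Lagrange equation is precisely \eqref{eq:phis}. Define $Q_s^\mu(x) := Q_s(\mu x)$ for $\mu > 0$. A direct change of variables gives
$$
S(Q_s^\mu) = \tfrac{\mu^{2s-1}}{2} \int |(-\DD)^{s/2} Q_s|^2 + \tfrac{\lambda_s \mu^{-1}}{2} \int |Q_s|^2 - \tfrac{\mu^{-1}}{\alpha+2} \int |Q_s|^{\alpha+2}.
$$
Since $Q_s$ solves the Euler-Lagrange equation, the map $\mu \mapsto S(Q_s^\mu)$ must have vanishing derivative at $\mu = 1$; indeed, the formal chain rule gives $\tfrac{d}{d\mu}|_{\mu=1} S(Q_s^\mu) = \langle S'(Q_s), xQ_s'\rangle = 0$, which is rigorous since $S'(Q_s)=0$ in the distributional sense and $xQ_s' \in L^2 \cap L^{\alpha+2}$ by the decay estimate of Proposition~\ref{prop:Q}(iii). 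Differentiating the above explicit formula in $\mu$ at $\mu = 1$ yields
\begin{equation}\label{eq:plan2}
(2s-1)\int |(-\DD)^{s/2} Q_s|^2 - \lambda_s \int |Q_s|^2 + \tfrac{2}{\alpha+2} \int |Q_s|^{\alpha+2} = 0.
\end{equation}

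\emph{Step 3 (Solving the linear system).} Treating \eqref{eq:plan1} and \eqref{eq:plan2} as two linear equations in the unknowns $X = \int |(-\DD)^{s/2} Q_s|^2$ and $Y = \lambda_s \int |Q_s|^2$, with right-hand sides proportional to $K := \int |Q_s|^{\alpha+2}$, elementary algebra gives
$$
Y = \tfrac{\alpha(2s-1) + 4s}{2s(\alpha+2)} K = \tfrac{2a_s}{\alpha+2} K,
\qquad X = \tfrac{\alpha}{s(\alpha+2)} K = \tfrac{2b_s}{\alpha+2} K,
$$
which are precisely the two identities claimed.

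\emph{Where the only subtlety lies.} Both steps are essentially routine; the sole point needing care is the justification of the scaling derivative in Step 2, since one must check that $Q_s^\mu$ depends on $\mu$ in a way smooth enough for the chain rule to apply, and that the boundary term at $\mu \to 1$ vanishes when differentiating under the integral. This is handled by the $H^{2s+1}$ regularity of $Q_s$ together with the decay of $Q_s$ and $xQ_s'$, and it would be safest to record a brief density/regularization argument (smoothly cut off $Q_s$, differentiate, then pass to the limit using dominated convergence) to avoid any discussion of possible distributional issues with the fractional Laplacian under dilations.
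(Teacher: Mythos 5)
Your proof is correct and arrives at exactly the same two linear equations the paper uses. The only methodological difference is in how you derive the scaling identity: you package it as the Derrick-type statement $\tfrac{d}{d\mu}\big|_{\mu=1}S(Q_s^\mu)=0$ and exploit the explicit $\mu$-dependence of the action, while the paper pairs equation \eqref{eq:phis} directly against $xQ'_s$ and invokes the operator commutator $-[\nabla\cdot x,(-\DD)^s]=2s(-\DD)^s$. These are two standard and equivalent ways to obtain the Pohozaev identity. If anything, the direct pairing is marginally cleaner to justify here, since it sidesteps the chain-rule-in-Banach-space issue you flag at the end: one only needs that equation \eqref{eq:phis} holds as an $H^{-s}$ identity and that $xQ'_s\in H^s(\R)$ (which follows from Proposition~\ref{prop:Q}(iii) and Lemma~\ref{lem:Qregular}), and then the pairing is a single well-defined computation rather than a derivative of a parametrized family.

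One small arithmetic slip worth fixing: in Step 3 your intermediate expression for $X$ reads $\tfrac{\alpha}{s(\alpha+2)}K$, but it should be $\tfrac{\alpha}{2s(\alpha+2)}K$; the final boxed form $\tfrac{2b_s}{\alpha+2}K$ is correct, so this is only a typo in the displayed intermediate.
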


\begin{proof}
By integrating \eqref{eq:phis} against $Q_s \in H^s(\R)$, we obtain
\begin{equation} \label{eq:app_poho1}
\int | (-\DD)^\frac{s}{2} Q_s|^2 + \lambda_s \int |Q_s|^2 = \int |Q_s|^{\alpha+2}. 
\end{equation}
Furthermore, we integrate \eqref{eq:phis} against $x Q'_s$. Integrating by parts yields the identities $\langle x Q'_s, |Q_s|^\alpha Q_s \rangle = -\frac{1}{\alpha+2} \int |Q_s|^{\alpha+2}$ and $\langle x Q_s', (-\DD)^s Q_s \rangle = \frac{2s-1}{2} \langle Q_s, (-\DD)^s Q_s \rangle$, where the second one follows from $- [\nabla \cdot x, (-\DD)^s] = 2s (-\DD)^s$. Hence, we deduce
\begin{equation} \label{eq:app_poho2}
\frac{2s-1}{2} \int |(-\DD)^\frac{s}{2} Q_s |^2 - \frac{\lambda_s}{2}  \int |Q_s|^2 = -\frac{1}{\alpha+2} \int |Q_s|^{\alpha+2} .
\end{equation}
(Note that the calculations here involving $x Q'_s$ are well-defined, thanks to the regularity and decay estimates from Proposition \ref{prop:Q}.) By combining equations \eqref{eq:app_poho1} and \eqref{eq:app_poho2}, we readily deduce Lemma \ref{lem:pohoidentities}.\end{proof}

Next, we derive the following straightforward a-priori bounds.

\begin{lemma} \label{lem:pohobounds}
For all $s \in [s_0, s_*)$, we have the following bounds
$$
1 \lesssim  \int | (-\DD)^{\frac{s}{2}} Q_s |^2 \lesssim 1,   \quad
1 \lesssim \lambda_s \int |Q_s|^2  \lesssim  1, \quad
1 \lesssim \int | Q_s |^2.
$$
\end{lemma}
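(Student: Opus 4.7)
My plan is to derive the three bounds sequentially, using Lemma~\ref{lem:pohoidentities} together with the conservation law $\int |Q_s|^{\alpha+2} = \int |Q_{s_0}|^{\alpha+2} =: k_0$ from Proposition~\ref{prop:local}(iii), and an anchoring argument for the Gagliardo-Nirenberg inequality to handle the $s$-dependence uniformly.

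I would start by checking that the coefficients $b_s = \alpha/(4s)$ and $a_s = \alpha(2s-1)/(4s) + 1$ appearing in Lemma~\ref{lem:pohoidentities} stay both bounded above and strictly away from zero for $s \in [s_0, 1)$. For $b_s$ this is automatic. For $a_s$, one uses that $s \mapsto a_s$ is increasing, together with the observation that the subcriticality hypothesis $\alpha < \amax(s_0)$ is a direct reformulation of $a_{s_0} > 0$. Inserting the conservation law into the Pohozaev identities then immediately gives
$$
\int |(-\DD)^{s/2} Q_s|^2 = \frac{2 b_s}{\alpha+2}\, k_0 \sim 1, \qquad \lambda_s \int |Q_s|^2 = \frac{2 a_s}{\alpha+2}\, k_0 \sim 1,
$$
which yields the first two bounds.

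The delicate task is the lower bound $\|Q_s\|_2 \gtrsim 1$. The idea is to invoke the Gagliardo-Nirenberg inequality \emph{at the fixed anchor scale} $s_0$ rather than at the running scale $s$, since at $s_0$ it holds with an $s$-independent constant $C_0$:
$$
\|Q_s\|_{\alpha+2}^{\alpha+2} \leq C_0 \, \|(-\DD)^{s_0/2} Q_s\|_2^{\alpha/(2 s_0)} \, \|Q_s\|_2^{2 a_{s_0}},
$$
and then to recover the missing $H^{s_0}$-control from the already-known $H^s$-control via H\"older's inequality in Fourier space,
$$
\|(-\DD)^{s_0/2} Q_s\|_2 \leq \|(-\DD)^{s/2} Q_s\|_2^{s_0/s}\, \|Q_s\|_2^{1-s_0/s},
$$
valid for $0 < s_0 \leq s$. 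Substituting $\|(-\DD)^{s/2} Q_s\|_2 \lesssim 1$ and $\|Q_s\|_{\alpha+2}^{\alpha+2} = k_0$, the exponents collapse (after an elementary algebraic manipulation, in which the $\alpha/(2s_0)$ terms cancel) to give the clean relation
$$
k_0 \lesssim \|Q_s\|_2^{2 a_s}.
$$
Since $2 a_s$ is uniformly bounded above and below away from zero on $[s_0, 1)$, a case distinction on whether $\|Q_s\|_2 \leq 1$ or $\|Q_s\|_2 > 1$ then forces $\|Q_s\|_2 \gtrsim 1$.

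The main obstacle is precisely this last uniformity issue: the direct route of applying Gagliardo-Nirenberg at the running scale $s$ fails, because the sharp constant $C_{\alpha, s}$ depends on $s$ and need not stay bounded as $s$ varies (in particular near the critical threshold of the inequality). The detour via the anchor scale $s_0$ combined with Fourier-space interpolation is what bypasses any $s$-dependence of constants and makes the bound quantitative in the regime we need.
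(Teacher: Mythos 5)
Your proof is correct and follows essentially the same route as the paper: the first two bounds come from the Pohozaev identities of Lemma \ref{lem:pohoidentities} together with the conservation of $\int |Q_s|^{\alpha+2}$ and the uniform bounds $1 \lesssim a_s, b_s \lesssim 1$, and the lower bound on $\int |Q_s|^2$ comes from the Gagliardo--Nirenberg inequality made uniform in $s$. The paper applies \eqref{ineq:GN} at the running scale $s$ and quotes Lemma \ref{lem:GNuniform} for the uniform bound $C_{\alpha,s} \leq K_\alpha$; since that lemma is itself proved by anchoring at the critical Sobolev scale $\frac{\alpha}{2(\alpha+2)}$ and interpolating $\| (-\DD)^{\sigma/2} f \|_2$ between $\| (-\DD)^{s/2} f \|_2$ and $\| f \|_2$, your anchoring at $s_0$ combined with the Fourier-space H\"older step is the same mechanism, merely carried out inline (and your exponent computation collapsing to $2a_s$, together with the case distinction, is sound).
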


\begin{proof}
Using Lemma \ref{lem:pohoidentities}, we obtain the desired a-priori bounds for $\int |(-\DD)^{\frac{s}{2}} Q_s|^2$ and $\lambda_s \int |Q_s|^2$, since we have  $\int |Q_s|^{\alpha+2} = \mbox{const}. \neq 0$ along the branch $(Q_s, \lambda_s)$ and clearly $1 \lesssim a_s, b_s \lesssim 1$ holds for $s \in [s_0,s_*)$.

To derive the lower bound $\int |  Q_s |^2 \gtrsim 1$, we recall the interpolation estimate \eqref{ineq:GN}, which yields that
\begin{equation*}
\left ( \int | Q_s |^2 \right )^{\frac{\alpha}{4s}(2s-1) + 1} \geq \frac{1}{C_{s, \alpha}} \frac{ \int |Q_s|^{\alpha+2}  }{ \left ( \int | (-\DD)^{\frac{s}{2}} Q_s |^2 \right )^{\frac{\alpha}{4s}} } \geq \frac{1}{C} \, ,
\end{equation*}
with some  constant $C > 0$ independent of $s$. Here we used again that $\int |Q_s|^{\alpha+2} = \mbox{const}.$ and $\int | (-\Delta)^{\frac{s}{2}} Q_s | ^2 \lesssim 1$ from above, as well as the fact that the optimal interpolations constants satisfy $C_{s, \alpha} \leq K$ by Lemma \ref{lem:GNuniform} with some constant $K > 0$ uniformly in $s \geq s_0 > \frac{\alpha}{2 (\alpha+2)}$. Here the last strict inequality is due to $\alpha < \amax(s_0)$. \end{proof}

We now derive an a-priori {\em upper bound} for $\int | Q_s |^2$ along the branch $(\lambda_s, Q_s)$. In fact, this result will be one of the key steps in order to extend the branch all the way to $s_*=1$. The proof of the following fact requires substantially more insight into the problem and it will also make use of some auxiliary results, which we derive further below.

\begin{lemma} \label{lem:gron}
For all $s \in [s_0, s_*)$, we have the following a-priori upper bound
$$
\int | Q_s|^2 \lesssim 1 .
$$ 
\end{lemma}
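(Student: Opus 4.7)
The plan is to reduce the $L^2$-bound to a uniform pointwise decay estimate $Q_s(|x|) \lesssim (1+|x|)^{-1}$ on $[s_0, s_*)$, from which $\int Q_s^2 \lesssim \int (1+|x|)^{-2}\,dx \lesssim 1$ follows by integration. As a preliminary step, I propagate the structure of the initial data along the branch: by continuity of $s \mapsto Q_s$ in $X^\alpha$, lifted to continuity in $C^0(\R)$ via the $H^{2s+1}$-regularity and Sobolev embedding (both uniform on compact subintervals of $[s_0, s_*)$), the set of $s$ for which $Q_s > 0$ is both open and closed, hence coincides with $[s_0, s_*)$ as $Q_{s_0} > 0$. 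At each such $s$, the moving-plane argument (Lemma \ref{lem:symm}) yields that $Q_s = Q_s(|x|)$ is even and strictly decreasing in $|x|$. As a first consequence of monotonicity and the conservation law, one has the crude decay $Q_s(|x|)^{\alpha+2}\cdot 2|x| \leq \int Q_{s_0}^{\alpha+2}$, that is, $Q_s(|x|) \lesssim |x|^{-1/(\alpha+2)}$ for $|x| \geq 1$, uniformly in $s$.

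The next step is to bootstrap this crude decay via the integral formulation
\begin{equation*}
Q_s(x) = \bigl(G_{s,\lambda_s} * Q_s^{\alpha+1}\bigr)(x),
\end{equation*}
where $G_{s,\lambda}$ is the Green's function of $(-\Delta)^s + \lambda$ on $\R$, known to satisfy the pointwise bound $G_{s,\lambda}(x) \lesssim (1+|x|)^{-1-2s}$ with $L^1$-norm $1/\lambda$. Splitting the convolution into the regions $|y| \leq |x|/2$ and $|y| > |x|/2$ and using the uniform bounds from Lemma \ref{lem:pohobounds} and a uniform $L^\infty$-bound on $Q_s$ (derived from $H^{2s+1}$-regularity together with the uniform Gagliardo–Nirenberg estimate of Lemma \ref{lem:GNuniform}), one shows that if $Q_s(|y|) \lesssim |y|^{-\gamma}$ then $Q_s(|x|) \lesssim |x|^{-1-2s} + \lambda_s^{-1}|x|^{-(\alpha+1)\gamma}$ at infinity. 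Since $\alpha > 0$, iterating this self-improvement starting from $\gamma_0 = 1/(\alpha+2)$ reaches $\gamma \geq 1+2s$ in finitely many steps, yielding $Q_s(|x|) \lesssim (1+|x|)^{-1-2s_0}$ at the end.

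The main obstacle is that the constants in the Green's function estimate and in the iteration above deteriorate as $\lambda_s \to 0$, which is precisely the scenario one would face if the desired $L^2$-bound were to fail (by the Pohozaev identity $\lambda_s \int Q_s^2 \asymp 1$ of Lemma \ref{lem:pohoidentities}, any blow-up of $\int Q_s^2$ forces $\lambda_s \to 0$, and conversely). To close the argument, I set up a continuity bootstrap: the set
\begin{equation*}
\mathcal{S} = \bigl\{ s \in [s_0, s_*) : \lambda_{s'} \geq c_0 \text{ for all } s' \in [s_0, s]\bigr\}
\end{equation*}
is non-empty (by continuity of $\lambda$ and $\lambda_{s_0} > 0$) and open in $[s_0,s_*)$. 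On each such subinterval the iteration of Step~2 runs with uniform constants and delivers the pointwise bound $Q_s(|x|) \lesssim (1+|x|)^{-1}$, hence $\int Q_s^2 \leq C$ with $C$ independent of $s$; the Pohozaev identity then forces $\lambda_s \geq C^{-1}\cdot \mathrm{const}$, which can be arranged strictly larger than $c_0$ by choosing $c_0$ small enough initially. This closes the bootstrap, so $\mathcal{S} = [s_0, s_*)$, and the uniform $L^2$-bound follows.
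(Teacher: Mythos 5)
Your overall strategy (prove a uniform pointwise decay $Q_s(|x|)\lesssim |x|^{-1}$ first, then integrate) inverts the logical order of the paper, and the continuity bootstrap you use to handle the degeneration $\lambda_s\to 0$ does not close. The only $s$-uniform kernel bound available is $G_{s,\lambda}(x)\lesssim \frac{1}{\lambda |x|}$ (Lemma \ref{lem:resolvent_p}\,(iv)); the bound $(1+|x|)^{-1-2s}$ with constants uniform in $s$ is exactly what the paper says it cannot extract from \cite{BlGe60} (see the remark after Lemma \ref{lem:feller}). Consequently every convolution step in your iteration, as well as the tail estimate, carries a factor $\lambda_s^{-1}\leq c_0^{-1}$, and your uniform $L^\infty$ bound is itself problematic: uniform $H^{2s+1}$ bounds along the branch are not available a priori (the regularity constants involve $\|Q_s\|_2$ and $\lambda_s^{-1}$, i.e.\ the very quantities at stake), and for $s_0\leq 1/2$ the conserved quantities $\|(-\DD)^{s/2}Q_s\|_2$ and $\|Q_s\|_{\alpha+2}$ do not control $\|Q_s\|_\infty$ by scaling. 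Hence the constant in your conclusion $\int Q_s^2\leq C$ on the set $\mathcal S$ is $C=C(c_0)$ with at least one (in fact several) powers of $c_0^{-1}$. To reopen the bootstrap you need, via Lemma \ref{lem:pohoidentities}, that $\lambda_s\gtrsim C(c_0)^{-1}>c_0$, i.e.\ $c_0\,C(c_0)$ small; but decreasing $c_0$ makes $C(c_0)$ larger at least proportionally, so the assertion ``can be arranged by choosing $c_0$ small enough'' treats $C$ as independent of $c_0$ and is precisely the circularity you set out to avoid. A further (secondary) gap: strict positivity of a function vanishing at infinity is not an open condition in $C^0$, so your open/closed argument for $Q_s>0$ does not work as stated; the paper's Lemma \ref{lem:Qposi} needs a Perron--Frobenius/norm-resolvent argument (and, as written there, it and the decay Lemma \ref{lem:decay} are proved \emph{after} and using the $L^2$ bound).

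For comparison, the paper avoids pointwise decay altogether at this stage: it differentiates the equation in $s$, uses the invertibility of $L_+$ on $L^2_{\mathrm{even}}$ together with the scaling identity $L_+R=-2s\lambda_s Q_s$ with $R=\frac{2s}{\alpha}Q_s+xQ_s'$ and the differentiated Pohozaev identities, and controls the extra term $\langle Q_s,(-\DD)^s\log(-\DD)Q_s\rangle\lesssim 1$ (Lemma \ref{lem:log}) using only the conserved $L^{\alpha+2}$ norm; this yields the differential inequality $\frac{d}{ds}\int|Q_s|^2\lesssim\int|Q_s|^2$, and the bound follows by integration (Gronwall). If you want to salvage your route, you would need decay constants that degrade strictly slower than $\lambda_s^{-1}$, which the available uniform kernel estimates do not provide.
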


\begin{proof}
We will derive the following the differential inequality
\begin{equation} \label{ineq:gron}
\frac{d}{ds} \int | Q_s |^2  \lesssim \int | Q_s |^2.
\end{equation}
Once this estimate is established, the desired a-priori bound follows from integrating this differential inequality. 

To show \eqref{ineq:gron}, we argue as follows. First, we note that
\begin{equation*}
\frac{d}{ds} \int | Q_s |^2 = 2 \langle Q_s, \frac{d Q_s}{ds} \rangle.
\end{equation*}
Next, by differentiating the equation satisfied by $Q_s$ with respect to $s$, we see that
\begin{equation*}
L_+ \frac{d Q_s}{ds} = - (-\DD)^s \log ( -\DD) Q_s - \frac{d \lambda_s}{ds} Q_s,
\end{equation*}
with $L_+ = (-\DD)^s + \lambda_s - (\alpha+1) |Q_s|^{\alpha}$. Recall that $\lambda_s$ is a differentiable function of $s$. Also, by bootstrap regularity arguments, we have that $Q_s \in H^{2s+1}(\R)$ and hence $(-\DD)^s \log (-\DD) Q_s \in L^2_{\mathrm{even}}(\R)$. Since $L_+$ is invertible on $L_{\mathrm{even}}^2(\R)$ and self-adjoint, we can combine the previous equations to obtain
\begin{equation}
\frac{d}{ds} \int |Q_s |^2 = I + II,
\end{equation}
where
\begin{equation} \label{eq:defII}
I = -2 \langle L_+^{-1} Q_s, (-\DD)^s \log(-\DD) Q_s \rangle , \quad II = - 2 \frac{d \lambda_s}{ds} \langle Q_s, L_+^{-1} Q_s \rangle .
\end{equation}
We start by estimating the term $I$ from above. Here a calculation shows that\footnote{Note that $R \in L^2(\R)$ by the decay estimate in Proposition \ref{prop:Q}. Moreover, we easily deduce that $R \in H^{2s+1}(\R)$ by analogous bootstrap arguments as done for $Q$.}
\begin{equation} \label{def:R}
L_+ R = -2s\lambda_s Q_s, \quad \mbox{with} \; R := \frac{d}{d \beta} \Big |_{\beta=1} \beta^{\frac{2s}{\alpha}} Q_s(\beta x) = \frac{2s}{\alpha} Q_s + x Q_s'.
\end{equation}
Therefore we conclude that
\begin{align*}
I & = \frac{1}{s \lambda_s} \langle R, (-\DD)^s \log(-\DD) Q_s \rangle = \frac{1}{s \lambda_s} \left \langle \frac{d}{d \beta} \Big |_{\beta=1} \beta^{\frac{2s}{\alpha}} Q_s (\beta \cdot), (-\DD)^s \log(-\DD) Q_s \right \rangle   \\
& = \frac{1}{2s \lambda_s}  \frac{d}{d \beta} \Big |_{\beta = 1}  \left \langle \beta^{\frac{2s}{\alpha}} Q_s(\beta \cdot), (-\DD)^s \log (-\DD) \beta^{\frac{2s}{\alpha}} Q_s(\beta \cdot) \right  \rangle \\
& = \frac{1}{2s \lambda_s} \frac{d}{d \beta} \Big |_{\beta = 1}  \left ( \beta^{\frac{4s}{\alpha} + 2s -1} \int |\widehat{Q}_s(\xi)|^2 |\xi|^{2s} \log (\beta^2 |\xi|^2 ) \, d \xi \right ) \\\
& = \frac{1}{2s\lambda_s} \left ( \left ( \frac{4s}{\alpha} + 2s -1 \right ) \big \langle Q_s, (-\DD)^s \log(-\DD) Q_s \big \rangle + 2 \big \langle Q_s, (-\DD)^s Q_s \big \rangle \right ) ,
\end{align*}
In the third step above, we used the self-adjointness of $(-\DD)^s \log(-\DD)$; whereas the fourth step follows from Plancherel's identity and change of variables. Note that all the manipulations here are well-defined, thanks to the regularity of $Q_s \in H^{2s+1}(\R)$.

Next, we apply Lemma \ref{lem:log} (derived below) which shows that the a-priori upper bound $\langle Q_s, (-\DD)^s \log (-\DD) Q_s \rangle \lesssim 1$ holds. Moreover, we notice that $\frac{4s}{\alpha} + 2s -1 \geq \frac{4s_0}{\alpha} + 2s_0 -1 \gtrsim 1$ due to the condition that $\alpha < \amax(s_0)$. In summary, we deduce that
\begin{equation*}
I \lesssim \frac{1}{\lambda_s} \lesssim \int |Q_s |^2 
\end{equation*}
for $s_0 \leq s < s_*$, where the last inequality clearly follows from Lemma \ref{lem:pohobounds}.

It remains to derive an upper bound for $II$ defined in \eqref{eq:defII} above. To this end, we recall the definition of $R$ in \eqref{def:R} which shows that
\begin{align} \label{eq:IIp}
\langle Q_s, L_+^{-1} Q_s \rangle & = -  \frac{1}{2 s \lambda_s} \langle R, Q_s \rangle = -\frac{1}{4s \lambda_s} \frac{d}{d\beta} \Big |_{\beta =1} \beta^{\frac{4s}{\alpha}} \langle Q_s(\beta \cdot), Q_s(\beta \cdot) \rangle \\ &  = \frac{1}{4s \lambda_s} \left (1- \frac{4s}{\alpha} \right ) \int |Q_s|^2. \nonumber
\end{align}
Next, if we differentiate the ``Pohozaev identities'' in Lemma \ref{lem:pohoidentities} with respect to $s$, we obtain
\begin{equation} \label{eq:llpp}
\frac{d \lambda_s}{d s} \int | Q_s |^2+ \lambda_s \frac{d}{ds} \int | Q_s |^2 = \frac{1}{2s^2} \frac{\alpha}{\alpha+2} \int | Q_s |^{\alpha+2} ,
\end{equation}
using that $\frac{d}{ds} \int |Q_s|^{\alpha+2} = 0$ holds. By combining \eqref{eq:IIp} and \eqref{eq:llpp}, we deduce that
\begin{align*}
II  & =  -2 \frac{d \lambda_s}{d s} \langle Q_s, L_+^{-1} Q_s \rangle \\
& = \big (\frac{1}{2s} - \frac{2}{\alpha} \big ) \Big \{ \frac{d}{ds} \int | Q_s |^2 -   \frac{1}{2s^2 \lambda_s} \frac{\alpha}{ \alpha+2} \int | Q_s |^{\alpha+2} \Big \} \\
& \leq \big (\frac{1}{2s} - \frac{2}{\alpha} \big ) \frac{d}{ds} \int | Q_s |^2 + \frac{C}{\lambda_s} ,
\end{align*}
for some constant $C>0$ independent of $s$. In the last step, we used again that $\int | Q_s|^{\alpha+2} = \mbox{const.}$ holds. Next, we recall that $\lambda_s^{-1} \lesssim  \int | Q_s |^2$ and $I \lesssim \int | Q_s |^2$. Hence we obtain
\begin{equation*}
\frac{d}{ds} \int | Q_s |^2 = I + II \leq \left ( \frac{1}{2s} - \frac{2}{\alpha} \right ) \frac{d}{ds} \int | Q_s |^2 +C \int | Q_s |^2,
\end{equation*}
where $C > 0$ is some constant independent of $s$. Noticing again that $1-( \frac{1}{2s} - \frac{2}{\alpha} ) \geq 1-( \frac{1}{2s_0} - \frac{2}{\alpha}) \gtrsim 1$ because of the condition $\alpha < \amax(s_0)$, we conclude that \eqref{ineq:gron} holds. The proof of Lemma \ref{lem:gron} is now complete. \end{proof}

Next, we establish an a-priori upper bound on the quantity $\langle Q_s, (-\DD)^s \log (-\DD) Q_s \rangle$, which was needed in the previous proof.

\begin{lemma} \label{lem:log}
For all $s \in [s_0, s_* )$, we have
$$
\langle Q_s, (-\DD)^s \log (-\DD) Q_s \rangle \lesssim 1.
$$
\end{lemma}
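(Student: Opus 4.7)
The plan is to view the quantity as a derivative and exploit convexity. Set
\[
f(t) \;:=\; \|(-\DD)^{t/2} Q_s\|_2^2 \;=\; \int_{\R} |\xi|^{2t}\, |\hat{Q}_s(\xi)|^2\, d\xi.
\]
Then $\langle Q_s, (-\DD)^s \log(-\DD) Q_s\rangle = f'(s)$. For each $\xi\in\R$ the map $t \mapsto |\xi|^{2t} = e^{2t\log|\xi|}$ is convex in $t$, so $f$ is convex on $(0,\infty)$. The slope estimate for convex functions yields, for any $t>s$,
\[
f'(s) \;\leq\; \frac{f(t)-f(s)}{t-s}.
\]
Taking $t=2s$ and using $f(s)\lesssim 1$ from Lemma \ref{lem:pohobounds} together with $s\geq s_0$, the lemma reduces to the bound $f(2s)=\|(-\DD)^s Q_s\|_2^2 \lesssim 1$.

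Next I would use the equation \eqref{eq:phis} in the form $(-\DD)^s Q_s = Q_s^{\alpha+1}-\lambda_s Q_s$ to get
\[
\|(-\DD)^s Q_s\|_2 \;\leq\; \|Q_s\|_{2(\alpha+1)}^{\alpha+1} + \lambda_s\|Q_s\|_2.
\]
The second term is $\lesssim \sqrt{\lambda_s}\sqrt{\lambda_s\|Q_s\|_2^2}\lesssim 1$, since Lemma \ref{lem:pohobounds} gives both $\lambda_s\|Q_s\|_2^2 \lesssim 1$ and, combined with $\|Q_s\|_2^2\gtrsim 1$, the derived bound $\lambda_s \lesssim 1$. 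So everything reduces to a uniform bound $\|Q_s\|_{2(\alpha+1)} \lesssim 1$.

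To establish this I would bootstrap on the fixed-point identity $Q_s = ((-\DD)^s+\lambda_s)^{-1}(Q_s^{\alpha+1})$. A scaling argument shows that the Fourier multiplier $|\xi|^{2s}/(|\xi|^{2s}+\lambda)$ satisfies the Mihlin--H\"ormander condition \emph{uniformly in} $\lambda > 0$; hence $((-\DD)^s+\lambda_s)^{-1}: L^p\to\dot{W}^{2s,p}$ is bounded with $\lambda_s$-independent constants, and the Sobolev embedding $\dot{W}^{2s,p}\hookrightarrow L^q$ (with $1/q=1/p-2s$, or $L^\infty$) then yields, starting from $Q_s\in L^{\alpha+2}$ (with bounded norm $\|Q_s\|_{\alpha+2}^{\alpha+2}=C_0$), a strict improvement of integrability at every step. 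That per-step improvement is precisely the subcriticality $\alpha<\amax(s)$, i.e.\ $s>\alpha/(2(\alpha+2))$. After finitely many iterations one therefore reaches $\|Q_s\|_\infty \lesssim 1$, and the resulting bound
\[
\|Q_s\|_{2(\alpha+1)}^{2(\alpha+1)}\;=\;\int|Q_s|^{\alpha+2}|Q_s|^{\alpha}
\;\leq\; \|Q_s\|_\infty^{\alpha}\,\|Q_s\|_{\alpha+2}^{\alpha+2} \;\lesssim\; 1
\]
closes the argument.

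The main obstacle is making this bootstrap \emph{uniformly} controlled in $s\in[s_0,s_*)$, because at this stage of the paper we still lack a uniform lower bound on $\lambda_s$, and a naive resolvent estimate would produce constants involving $\lambda_s^{-1}$. This is resolved by the $\lambda$-uniform Mihlin estimate above; the number of required iteration steps is uniformly bounded since $s\geq s_0 > \alpha/(2(\alpha+2))$ keeps us a definite distance from the critical threshold, and the Sobolev constants depend continuously on $s \in [s_0,1)$.
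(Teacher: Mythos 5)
Your overall architecture is sound and your two careful points are handled correctly: you avoid the circular use of $\|Q_s\|_2\lesssim 1$ (which is only proved later, in Lemma \ref{lem:gron}, using the present lemma) by writing $\lambda_s\|Q_s\|_2=\sqrt{\lambda_s}\sqrt{\lambda_s\|Q_s\|_2^2}\lesssim 1$, and your observation that the multiplier $|\xi|^{2s}/(|\xi|^{2s}+\lambda)$ is bounded (indeed Mihlin) uniformly in $\lambda$ correctly circumvents the missing lower bound on $\lambda_s$. But your route is genuinely different from, and much heavier than, the paper's. The convexity step $|\xi|^{2s}\log(|\xi|^2)\le s^{-1}\bigl(|\xi|^{4s}-|\xi|^{2s}\bigr)$ is the same pointwise mechanism the paper uses, except that the paper stops at a \emph{lower} order: it chooses $t=s-\frac{\alpha}{4(\alpha+2)}<s$, notes that $4t-2s=2s-\frac{\alpha}{\alpha+2}\ge 2s_0-\frac{\alpha}{\alpha+2}>0$ is all the logarithm needs (via $\log(|\xi|^2)\lesssim|\xi|^{4t-2s}$ for $|\xi|\ge1$), and then bounds $\|(-\DD)^tQ_s\|_2$ in one stroke: since $|\xi|^{2t}/(|\xi|^{2s}+\lambda_s)\le|\xi|^{2(t-s)}$ for every $\lambda_s>0$, the equation gives $\|(-\DD)^tQ_s\|_2\le\|(-\DD)^{t-s}(|Q_s|^\alpha Q_s)\|_2$, and the weak Young inequality with the Riesz kernel $|x|^{-(\alpha+4)/(2(\alpha+2))}$ bounds this by $\|Q_s\|_{\alpha+2}^{\alpha+1}$, which is conserved along the branch. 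So the subcriticality $\alpha<\amax(s_0)$ is spent on the exponent bookkeeping, and no $L^\infty$ bound, no Mihlin theorem, and no bootstrap are needed.

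By insisting on the order-$2s$ quantity $\|(-\DD)^sQ_s\|_2$ you are forced into the uniform bound $\|Q_s\|_{2(\alpha+1)}\lesssim1$, hence into the bootstrap, and there one step fails as literally written: the homogeneous embedding ``$\dot W^{2s,p}\hookrightarrow L^\infty$'' in the supercritical regime $2s>1/p$ is false (a homogeneous seminorm of positive order never controls $L^\infty$), and for $s\ge1/2$ the Riesz-potential representation behind $1/q=1/p-2s$ is not available either. Both defects are repairable---at the final step interpolate the seminorm $\|(-\DD)^sQ_s\|_p$ against the conserved $\|Q_s\|_{\alpha+2}$ via a fractional Gagliardo--Nirenberg inequality, and keep each iteration a fixed distance from the critical exponent so that the number of steps and all constants depend only on $(s_0,\alpha)$---but they require additional harmonic-analysis input and uniformity checks that the paper's choice of $t<s$ renders unnecessary.
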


\begin{proof}
From the identity $Q_s = ((-\DD)^s + \lambda_s)^{-1} |Q_s|^{\alpha} Q_s$ we deduce that
\begin{equation} \label{ineq:young1}
\| (-\DD)^t Q_s \|_2 = \left \| \frac{(-\DD)^t}{(-\DD)^s + \lambda_s} |Q_s|^{\alpha} Q_sÊ\right \|_2 \leq \| (-\DD)^{t-s} ( |Q_s|^{\alpha} Q_s) \|_2 ,
\end{equation}
for any $t \geq 0$. In particular, we can choose 
\begin{equation*}
t := s - \frac{\alpha}{4 (\alpha+2)} ,
\end{equation*}
which implies that $s > t > s-s_0/2 \geq s_0/2$ thanks to the condition $\alpha < \amax(s_0)$.
 
By our choice of $t$, the operator $(-\DD)^{t-s}$ on $\R$ is given by convolution with the singular integral kernel $|x|^{-(\alpha+4)/(2 (\alpha+2))}$ up to a multiplicative constant $C$ depending only on $\alpha$.  Hence, by the weak Young inequality, we deduce from \eqref{ineq:young1} the following bound
\begin{equation*} \label{ineq:tbound}
\| (-\DD)^{t} Q_s \|_2 \lesssim \| |x|^{-\frac{\alpha+4}{2(\alpha+2)} } \ast ( |Q_s|^{\alpha} Q_s ) \|_2 \lesssim  \| |Q_s|^{\alpha+1} \|_{\frac{\alpha+2}{\alpha+1}} \lesssim \| Q_s \|_{\alpha+2}^{\alpha+1} \lesssim 1,
\end{equation*}
using that $\int | Q_s |^{\alpha+2} = \mbox{const}.$ holds. But the last estimate implies that 
\begin{align*} 
\langle Q_s, (-\DD)^s \log (-\DD) Q_s \rangle &  =  \int |\xi|^{2s} \log(|\xi|^2) |\widehat{Q}_s(\xi)|^2 \, d\xi \\ & \lesssim \int |\xi|^{4t} |\widehat{Q}_s(\xi)|^2 \, d\xi \lesssim \| (-\DD)^t Q_s \|_2^2  \lesssim 1.
\end{align*}
Here we used Plancherel's identity together with the inequality
\begin{equation} \label{ineq:log2}
 \log(|\xi|^{2}) \leq C |\xi|^{4t-2s} ,
\end{equation}
where the constant $C > 0$ only depends on $\alpha$ and $s_0$. Indeed, this inequality can be simply derived as follows. Note that
$$
4t-2s = 2s - \frac{\alpha}{\alpha+2} \geq  2s_0 - \frac{\alpha}{\alpha+2} \geq \delta,
$$
for some constant $\delta > 0$ depending only on $\alpha$ and $s_0$. (To see this, simply use the strict inequality $\frac{\alpha}{\alpha+2} < 2s_0$ due to the condition on $\alpha$.)  Since $\log (z^2) \leq 2\delta^{-1} z^{\delta}$ for $z \geq 1$ and $\delta \leq 4t-2s$, we deduce that \eqref{ineq:log2} holds for $|\xi| \geq 1$. The inequality \eqref{ineq:log2} is obviously true when $|\xi| < 1$, since the left-hand side is negative in this case. This completes the proof of Lemma \ref{lem:log}. \end{proof}

As a next step, we wish to analyze the sequences $\{ Q_{s_n} \}_{n=0}^\infty$ where $s_n \to s_*$. In particular, our goal is to derive {\em strong convergence} of $\{ Q_s \}_{n=1}^\infty$ (along the subsequences) with respect to the norm $\| \cdot \|_{X^\alpha} = \| \cdot \|_2 + \| \cdot \|_{\alpha+2}$. Recall from Lemmas \ref{lem:pohobounds} and \ref{lem:gron} the a-priori bound 
$$
\int |(-\DD)^{\frac{s}{2}} Q_s|^2 + \int |Q_s|^2 \lesssim 1, \quad \mbox{for $s \in [s_0,s_*)$}.
$$ 
Suppose now that $s_n \to s_*$. To turn the uniform bound $\| Q_{s_n} \|_{H^s} \lesssim 1$ into strong convergence of $\{ Q_{s_n} \}_{n=1}^\infty$  in some $L^p$-norm, we need a further ingredient. Indeed, since we consider $d=1$ space dimension, we recall the well-known fact that the even-symmetry of the functions $\{ Q_{s_n} \}_{n=1}^\infty$ (unlike for radial symmetry in $d \geq 2$ dimensions) is {\em not sufficient} to gain relative compactness of $\{Q_{s_n}\}_{n=1}^\infty$ in some $L^p$-norm. To deal with this, we will now impose that $Q_{s_0}=Q_{s_0}(|x|)> 0$ is  a positive function. Then the following result shows that $Q_s( |x| ) > 0$ along the branch. This fact will, in turn, lead to monotonicity of the functions $Q_s(|x|)$ in $|x|$. From this property and the a-priori bound on $\lambda_s \sim 1$, we finally derive a {\em uniform decay} estimate of the form $Q_s(|x|) \lesssim |x|^{-1}$ for $|x|$ outside a fixed compact set, which will enable us to gain relative compactness in $L^2(\R) \cap L^{\alpha+2}(\R)$. 

First, we establish that positivity of $Q_s(|x|) > 0$ holds along the maximal branch, provided that $Q_{s_0}(|x|) > 0$ is assumed initially.

\begin{lemma} \label{lem:Qposi}
Suppose that $Q_{s_0}=Q_{s_0}(|x|) > 0$ is positive. Then $Q_{s} = Q_{s}(|x|) > 0$ for $x \in \R$ and $s \in [s_0,s_*)$. 
\end{lemma}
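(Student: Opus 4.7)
The strategy is a continuation argument on the connected parameter interval $[s_0, s_*)$. Setting
\[
S := \{s \in [s_0, s_*) : Q_s(x) > 0 \text{ for all } x \in \R\},
\]
we have $s_0 \in S$ by hypothesis, and it suffices to prove that $S$ is both relatively closed and relatively open in $[s_0, s_*)$.

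The workhorse, and what I expect to be the main conceptual obstacle, is a \emph{quantitative} dichotomy for the negative part $Q_s^- := \max(-Q_s, 0)$ that is uniform along the branch. Testing \eqref{eq:phis} against $Q_s^-$ and using the disjoint-support identities $\langle Q_s^-, Q_s\rangle = -\|Q_s^-\|_2^2$ and $\langle Q_s^-, |Q_s|^\alpha Q_s\rangle = -\|Q_s^-\|_{\alpha+2}^{\alpha+2}$, together with the Kato-type bilinear inequality
\[
\langle Q_s^-, (-\DD)^s Q_s\rangle \;\leq\; -\|(-\DD)^{s/2} Q_s^-\|_2^2
\]
(which follows from the quadratic-form representation of $(-\DD)^s$ once one notes that $\langle Q_s^-, (-\DD)^s Q_s^+\rangle \leq 0$, thanks to the disjoint supports of $Q_s^\pm \geq 0$), will yield
\[
\|Q_s^-\|_{\alpha+2}^{\alpha+2} \;\geq\; \lambda_s \|Q_s^-\|_2^2 + \|(-\DD)^{s/2} Q_s^-\|_2^2 \;\geq\; \lambda_s \|Q_s^-\|_2^2.
\]
Combined with the elementary bound $\|Q_s^-\|_{\alpha+2}^{\alpha+2} \leq \|Q_s^-\|_\infty^\alpha \|Q_s^-\|_2^2$, this produces the dichotomy: either $Q_s^- \equiv 0$, or $\|Q_s^-\|_\infty \geq \lambda_s^{1/\alpha}$. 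Since Lemmas \ref{lem:pohobounds} and \ref{lem:gron} furnish $\lambda_s \gtrsim 1$ uniformly on $[s_0, s_*)$, this is a uniform quantitative gap.

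Closedness of $S$ follows easily: if $s_n \to s$ in $[s_0, s_*)$ with $Q_{s_n} > 0$, the $X^\alpha$-continuity of the branch from Proposition \ref{prop:local} gives $L^2$-convergence, hence pointwise a.e.\ convergence along a subsequence, so $Q_s \geq 0$ a.e., and then $Q_s \geq 0$ everywhere by continuity of $Q_s$. The strict positivity $Q_s > 0$ then follows from rewriting \eqref{eq:phis} as the integral equation $Q_s = G_{s, \lambda_s} \ast Q_s^{\alpha+1}$, where $G_{s, \lambda_s}$ is the (strictly positive) resolvent kernel of $(-\DD)^s + \lambda_s$ on $\R$, together with the a-priori lower bound $\|Q_s\|_2 \gtrsim 1$ of Lemma \ref{lem:pohobounds} that ensures $Q_s \not\equiv 0$.

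Openness is where the dichotomy is indispensable. Given $s \in S$, so that $Q_s^- \equiv 0$, I plan to upgrade the branch's $X^\alpha$-continuity to $L^\infty$-continuity by combining a standard bootstrap of $Q_s = G_{s,\lambda_s}\ast |Q_s|^\alpha Q_s$---which, starting from the uniform controls $\|Q_s\|_{H^s}\lesssim 1$ and $\lambda_s\gtrsim 1$, yields a uniform bound in some $H^r(\R)$ with $r > 1/2$---with interpolation against the $L^2$-convergence $Q_{s'} \to Q_s$, followed by the Sobolev embedding $H^{r'} \hookrightarrow L^\infty(\R)$ for $r' \in (1/2, r)$. This forces $\|Q_{s'}^-\|_\infty \leq \|Q_{s'} - Q_s\|_\infty \to 0$ as $s' \to s$. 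Since $\lambda_{s'}^{1/\alpha}$ stays bounded away from zero, the dichotomy rules out its non-trivial branch for $s'$ sufficiently close to $s$, forcing $Q_{s'}^- \equiv 0$, i.\,e.\ $Q_{s'} \geq 0$; the integral-representation argument from the closedness step then promotes this to $Q_{s'} > 0$, i.\,e.\ $s' \in S$.
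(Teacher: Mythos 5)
Your argument is correct, but the openness step takes a genuinely different route from the paper's. The paper treats positivity spectrally: writing $A_s=(-\DD)^s+\lambda_s-|Q_s|^\alpha$ it observes $A_sQ_s=0$, proves $A_s\to A_{\tilde s}$ in the norm resolvent sense (via $L^p$-convergence of the potentials, obtained by interpolating the uniform $H^{s_0}$ bound against the $L^2$-continuity of the branch), and then invokes Perron--Frobenius (Lemma \ref{lem:perron}) together with the isolation and simplicity of the lowest eigenvalue to conclude that $Q_s$ stays the positive ground state of $A_s$ for $s$ near $\tilde s$; its closedness step is exactly your pointwise-limit-plus-positive-resolvent-kernel argument. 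Your substitute for the spectral step --- testing \eqref{eq:phis} against $Q_s^-$ and using $\langle Q_s^-,(-\DD)^s Q_s^+\rangle\leq 0$ to obtain the uniform dichotomy ``$Q_s^-\equiv 0$ or $\|Q_s^-\|_\infty\geq\lambda_s^{1/\alpha}\gtrsim 1$'' --- is valid ($Q_s^-\in H^s(\R)$, and the bilinear singular-integral form of $(-\DD)^s$ gives the sign), and it bypasses Perron--Frobenius and norm-resolvent convergence entirely. The price is that you need $L^\infty$-continuity of $s\mapsto Q_s$, which Proposition \ref{prop:local} does not give directly; this uniform $H^r$ bound with $r>1/2$ is the one ingredient you should spell out, e.g.\ by iterating $Q_s=((-\DD)^s+\lambda_s)^{-1}(|Q_s|^\alpha Q_s)$ with the $s$-uniform resolvent bounds of Lemma \ref{lem:resolvent_p} to get $\|Q_s\|_\infty\lesssim 1$, then bootstrapping in the Sobolev scale as in Lemma \ref{lem:Qregular} (whose constants depend only on $s_0$, $\alpha$, $\lambda_s\sim 1$ and the conserved $L^{\alpha+2}$-norm) to reach a uniform $H^1$ bound, and finally interpolating against $\|Q_{s'}-Q_s\|_2\to 0$. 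By comparison, the paper only needs $L^p$-convergence of $|Q_s|^\alpha$ for some $p>1/(2s_0)$, which is cheaper when $s_0\leq 1/2$; what your approach buys is a quantitative, purely variational positivity mechanism that avoids the spectral machinery altogether.
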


\begin{proof}
We divide the proof into two steps as follows.

\subsubsection*{Step 1} First, we show that positivity of $Q_{s}(|x|) > 0$ is an ``open'' property along the branch $(Q_s, \lambda_s)$. That is, if we assume that $Q_{\tilde{s}} = Q_{\tilde{s}}(|x|) > 0$ for some $\tilde{s} \in [s_0, s_*)$ then 
$$
\mbox{$Q_{s}= Q_s(|x|) > 0$ for $s \in [s_0,s_*)$ with $|s - \tilde{s} | < \eps$,}
$$
where $\eps > 0$ is sufficiently small. To prove this claim, we consider the family of self-adjoint operators
$$
A_{s} = (-\DD)^{s} + \lambda_s - V_s, \quad \mbox{with $V_s = |Q_s|^\alpha$,}
$$
acting on $L^2(\R)$. Clearly, we have 
\begin{equation} \label{eq:AsQs}
A_s Q_s = 0
\end{equation}
In particular, we see that $Q_s$ is an eigenfunction of $A_s$ with eigenvalue 0. Furthermore, by Lemma \ref{lem:perron}, the lowest eigenvalue of $A_s$ is nondegenerate and its corresponding eigenfunction is strictly positive. In particular, the function $Q_{\tilde{s}}(|x|) > 0$ is the ground state eigenfunction of $A_{\tilde{s}}$ and hence 0 is the lowest eigenvalue of $A_{\tilde{s}}$. Thus, in view of \eqref{eq:AsQs} and Lemma \ref{lem:perron}, it suffices to show that $0$ is the lowest eigenvalue of $A_s$  (for $s$ close to $\tilde{s}$) and finally rule out that $Q_{s} < 0$ holds. 

To deduce that $0$ is the lowest eigenvalue of $A_s$ when $s$ is close to $\tilde{s}'$, we use a spectral convergence argument. Indeed, we claim that $A_s \to A_{\tilde{s}}$ in the norm resolvent sense as $s\to \tilde{s'}$, by which we mean that
\begin{equation} \label{eq:Asconv}
 \left \| \frac{1}{A_s + z} - \frac{1}{A_{\tilde{s}} +z} \right \|_{L^2 \to L^2} \to 0
\end{equation} 
as $s \to \tilde{s}$, where $z \in \mathbb{C}$ with $\mathrm{Im} \, z \neq 0$. In fact, by straightforward estimates, we find that \eqref{eq:Asconv} will follow from $\lambda_s \to \lambda_{\tilde{s}}$ and provided we can show that $V_s = |Q_s|^\alpha$ satisfies
\begin{equation} \label{conv:Vs}
\mbox{$\| V_s - V_{\tilde{s}} \|_p \to 0$ as $s \to \tilde{s}$},
\end{equation} 
for some $p \geq 1$ such that $p > \frac{1}{2s_0} \geq \frac{1}{2s}$. 

To see that we can always find such $p$, we argue as follows.  Recall that $V_s = |Q_s|^\alpha$ and $Q_s \to Q_{\tilde{s}}$ in $L^2(\R)$. Moreover, by Lemmas \ref{lem:pohobounds} and \ref{lem:gron}, we have $\| Q_s \|_{H^s} \lesssim 1$ and thus $\| Q_{s} \|_{H^{s_0}} \lesssim 1$ since $s \geq s_0$. Hence, by interpolation, we find that 
$$
\mbox{$\| Q_s - Q_{\tilde{s}} \|_{H^{\sigma_0}} \to 0$ as $s \to \tilde{s}$},
$$ 
for any $0 \leq \sigma_0 < s_0$. In particular if $s_0 > 1/2$, then $\| Q_s - Q_{\tilde{s}} \|_\infty \to 0$ by Sobolev inequalities. Hence we can choose $p=+\infty$ in \eqref{conv:Vs} and we conclude that \eqref{eq:Asconv} holds whenever $s_0 > 1/2$. Assume now that $s_0 \leq 1/2$. In this case, by Sobolev inequalities and H\"older's inequality, we deduce that $V_s = |Q_s|^\alpha$ satisfies \eqref{conv:Vs} for $p = \frac{1}{\alpha} \frac{2}{1-2\sigma_0}$ with any $0 \leq \sigma_0 < s_0$. But since $\frac{1}{\alpha} \frac{2}{1-2 s_0} > \frac{1}{2s_0}$ due to the condition $\alpha < \amax(s_0)$, we can choose $\sigma_0 < s_0$ sufficiently close to $s_0$ such that $p = \frac{1}{\alpha} \frac{2}{1-2 \sigma_0} > \frac{1}{2 s_0}$ as well. This shows that \eqref{eq:Asconv} also holds when $s_0 \leq 1/2$. 
    
Since we have derived that $A_s \to A_{\tilde{s}}$ in the norm resolvent sense, we can now complete the proof by standard spectral arguments: Let $\lambda_1(A_s)$ denote the lowest eigenvalue of $A_s$. By Lemma \ref{lem:perron}, the eigenvalue $\lambda_1(A_s)$ is nondegenerate and its corresponding eigenfunction $\psi_{1,s}(x) > 0$ is strictly positive. Since $Q_{\tilde{s}}(x) > 0$ satisfies $A_{\tilde{s}} Q_{\tilde{s}} = 0$, we deduce that $\lambda_1(A_{\tilde{s}}) = 0$ and $Q_{\tilde{s}}(x) = \psi_{1,\tilde{s}}(x)$ holds. Since $A_s \to A_{\tilde{s}}$ in the norm resolvent sense, we conclude that $\lambda_1(A_s) \to \lambda_1(A_{\tilde{s}})$ as $s \to \tilde{s}$ and that $\lambda_1(A_s)$ is simple for $s$ close to $\tilde{s}$. (The last statement also follows from Lemma \ref{lem:perron}.) Moreover note that $\lambda_1(A_{\tilde{s}})$ is isolated. Hence we can find $c > 0$ sufficiently small such that the interval $I_c = (-c, c)$ satisfies $\sigma  (A_{\tilde{s}}) \cap I_c = \{ \lambda_1(A_{\tilde{s}}) \}$. Thus, by the above convergence properties, we deduce that 
$$ \sigma(A_s) \cap I_c = \{ \lambda_1(A_s) \} $$
whenever $|s - \tilde{s} | <\eps$, where $\eps > 0$ is sufficiently small. On the other hand, we recall that $A_{s} Q_{s} = 0$, which shows that $\lambda_1(A_s) = 0$ holds for $|s-\tilde{s}| <\eps$. By Lemma \ref{lem:perron} again, we deduce that $Q_{s}(x) = \sigma(s) \psi_{1,s}(x)$, where $\psi_{1,s}(x) > 0$ is the unique ground state eigenfunction of $A_{s}$ and $\sigma(s) \in \{+1, -1 \}$ is some sign depending on $s$. However, we have that $Q_s(x) \to Q_{\tilde{s}}(x) > 0$ pointwise a.\,e.~as $s \to \tilde{s}$, which implies that $\sigma(s) = +1$ for all $s$ close to $s_0$. Therefore we conclude that $Q_{s}(x) = \psi_{1,s}(x) > 0$ for all $|s-\tilde{s}| < \eps$, provided that $\eps > 0$ is small and $Q_{\tilde{s}}(x) > 0$ holds. 

\subsubsection*{Step 2} Next, we prove that positivity of $Q_s$ along the branch is a ``closed'' property. That is, if $Q_{s}(|x|) > 0$ for all $s \in [s_0, \tilde{s})$ with some $\tilde{s} < s_*$, then $Q_{\tilde{s}}(|x|) > 0$ as well. Indeed, let $\tilde{s} \in (s_0, s_*)$ be given and suppose that $\{ s_n \}_{n=1}^\infty \subset [s_0,\tilde{s})$ is a sequence with $s_n \to \tilde{s}$. Moreover, we assume that $Q_{s_n}(|x|) > 0$ for all $n \in \mathbb{N}$. Note that $Q_{s_n} \to Q_{\tilde{s}}$ strongly in $H^{\sigma_0}(\R)$ for any $0 \leq \sigma_0 < s_0$, as shown in Step 1 above. In particular, we have that $Q_{s_n} \to Q_{\tilde{s}}$ pointwise a.\,e.~in $\R$, which implies  $Q_{\tilde{s}}(|x|) \geq 0$. Also, notice that $Q_{\tilde{s}} \not \equiv 0$ due to $\| Q_{\tilde{s}} \|_{\alpha+2} = \| Q_{s_0} \|_{\alpha+2} \neq 0$. Hence $Q_{\tilde{s}}$ is a nonnegative and nontrivial solution of
$$
Q_{\tilde{s}} = \frac{1}{(-\DD)^{\tilde{s}} + \lambda_{\tilde{s}}} |Q_s|^\alpha Q_s.
$$
From this we deduce that positivity $Q_{\tilde{s}}(|x|) > 0$ holds by using  Lemma \ref{lem:resolvent_p}, which establishes the positivity of the integral kernel for the resolvent $((-\DD)^s + \lambda)^{-1}$ with $0 < s < 1$ and $\lambda > 0$.

By combining the results of Steps 1 and 2 above, we complete the proof of Lemma~\ref{lem:Qposi}. \end{proof}

Next, we derive a uniform spatial decay estimate along the maximal branch $(Q_s, \lambda_s)$, provided that $Q_{s_0}(|x|) > 0$ holds initially.

\begin{lemma} \label{lem:decay}
Suppose that $Q_{s_0}(|x|) > 0$ holds. Then we have the uniform decay estimate
$$
0 < Q_{s}(|x|) \lesssim \frac{1}{|x|} 
$$
for $|x| \geq R_0$ and $s \in [s_0, s_*)$. Here $R_0 > 0$ is some constant independent of $s$.
\end{lemma}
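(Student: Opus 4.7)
The plan is to reduce the uniform pointwise bound to a uniform $L^1(\R)$-bound on $Q_s$, and to obtain the latter from the resolvent representation combined with an interpolation-absorption argument. First, by Lemma \ref{lem:Qposi} the strict positivity $Q_s(|x|)>0$ is preserved along the branch; since $Q_s\in X^\alpha$ is even, the moving plane argument used in the proof of Proposition \ref{prop:Q}(ii) (see Lemma \ref{lem:symm}) applies and shows that $Q_s(|x|)$ is \emph{strictly decreasing} in $|x|$ for every $s\in[s_0,s_*)$. This monotonicity immediately yields the elementary estimate
\[
2|x|\, Q_s(|x|)\;\leq\;\int_{-|x|}^{|x|} Q_s(y)\,dy\;\leq\;\|Q_s\|_{L^1(\R)}
\qquad\text{for all }x\neq 0,
\]
so it is enough to prove $\|Q_s\|_{L^1(\R)}\lesssim 1$ uniformly in $s\in[s_0,s_*)$.

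For the $L^1$-bound I will use the fixed-point form of the equation,
\[
Q_s \;=\; G_{s,\lambda_s}\ast Q_s^{\alpha+1},
\]
where $G_{s,\lambda_s}\geq 0$ (cf.\ Lemma \ref{lem:resolvent_p}) is the integral kernel of $((-\DD)^s+\lambda_s)^{-1}$. A Fourier-side computation together with positivity gives the key identity
\[
\|G_{s,\lambda_s}\|_{L^1(\R)}\;=\;\widehat{G_{s,\lambda_s}}(0)\;=\;\frac{1}{\lambda_s},
\]
which is uniformly bounded since $\lambda_s\gtrsim 1$ by Lemma \ref{lem:pohobounds}. Young's convolution inequality then yields
\[
\|Q_s\|_{L^1}\;\leq\;\|G_{s,\lambda_s}\|_{L^1}\,\|Q_s^{\alpha+1}\|_{L^1}\;=\;\frac{1}{\lambda_s}\,\|Q_s\|_{L^{\alpha+1}}^{\alpha+1}.
\]

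To close the estimate, I will interpolate $L^{\alpha+1}$ between $L^1$ and $L^{\alpha+2}$:
\[
\|Q_s\|_{L^{\alpha+1}}\;\leq\;\|Q_s\|_{L^1}^{\theta}\,\|Q_s\|_{L^{\alpha+2}}^{1-\theta},
\qquad\theta=\frac{1}{(\alpha+1)^2}\,.
\]
Substituting into the Young estimate, the exponent of $\|Q_s\|_{L^1}$ on the right becomes $\theta(\alpha+1)=1/(\alpha+1)<1$, so one may absorb to obtain
\[
\|Q_s\|_{L^1}^{\alpha/(\alpha+1)}\;\lesssim\;\frac{1}{\lambda_s}\,\|Q_s\|_{L^{\alpha+2}}^{\alpha(\alpha+2)/(\alpha+1)}\;\lesssim\;1,
\]
where the last inequality uses $\lambda_s\gtrsim 1$ (Lemma \ref{lem:pohobounds}) and the conservation law $\|Q_s\|_{L^{\alpha+2}}=\|Q_{s_0}\|_{L^{\alpha+2}}$ from Proposition \ref{prop:local}(iii). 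Combined with the monotonicity estimate above, this gives $Q_s(|x|)\lesssim |x|^{-1}$ for $|x|\geq R_0$ and any fixed $R_0>0$.

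The main subtlety is that the absorption step requires $\|Q_s\|_{L^1}$ to be a priori finite; this is provided, with $s$-dependent constants, by the decay bound of Proposition \ref{prop:Q}(iii), after which the above argument upgrades it to the desired uniform quantitative bound. The main obstacle I expect is essentially bookkeeping: every constant in the chain must be kept independent of $s\in[s_0,s_*)$, which is precisely why one relies on the explicit conservation law for $\|Q_s\|_{\alpha+2}$ and on the already-established uniform lower bound $\lambda_s\gtrsim 1$, rather than on any norm whose uniformity has not yet been controlled.
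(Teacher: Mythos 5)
Your argument is correct, but it takes a genuinely different route from the paper. The paper first uses monotonicity together with the uniform $L^2$ bound to get the weak decay $Q_s(x)\lesssim |x|^{-1/2}$, deduces from $\lambda_s\geq 2\mu$ that the positive part of the source $f_s=Q_s(Q_s^{\alpha}-\lambda_s+\mu)$ is supported in a fixed ball $\{|x|\leq R_0/2\}$ independent of $s$, and then concludes via the uniform pointwise kernel bound $G_{s,\mu}(x)\lesssim |x|^{-1}$ of Lemma \ref{lem:resolvent_p}~(iv). You instead upgrade the monotonicity estimate by proving the stronger statement $\|Q_s\|_{L^1}\lesssim 1$ uniformly, using $\|G_{s,\lambda_s}\|_{L^1}=c/\lambda_s$, Young's inequality, and an interpolation--absorption step closed by the conserved $L^{\alpha+2}$ norm; this buys a cleaner pointwise bound valid for all $|x|>0$ and avoids the compact-support argument altogether, at the price of needing the a priori (possibly $s$-dependent) finiteness of $\|Q_s\|_{L^1}$ to justify the absorption, which you correctly supply from the decay estimate of Proposition \ref{prop:Q}~(iii) applied (after rescaling in $\lambda_s$) to each fixed $s$. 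Two small points to tighten: the lower bound $\lambda_s\gtrsim 1$ is not a consequence of Lemma \ref{lem:pohobounds} alone but of Lemmas \ref{lem:pohobounds} and \ref{lem:gron} combined (exactly as in the paper's proof), and the identity $\|G_{s,\lambda}\|_{L^1}=c\lambda^{-1}$ is best justified not by a bare Fourier evaluation (Lemma \ref{lem:resolvent_p}~(i) does not cover $p=1$) but via the subordination formula \eqref{eq:resolvent_laplace} and Tonelli, using that $P^{(s)}(\cdot,t)>0$ has constant total mass by Lemma \ref{lem:Pt_unimodal} and scaling; with these justifications in place, every constant in your chain depends only on $s_0$, $\alpha$ and $(Q_{s_0},\lambda_{s_0})$, as required.
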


\begin{proof}
For any $\mu > 0$ given, we can rewrite the equation satisfied by $Q_s$ as follows: 
$$
Q_s = \big ( (-\DD)^s + \mu \big)^{-1} f_s , \quad \mbox{with $f_s(x)  = Q_s(x) \big ( Q_s^{\alpha}(x) - \lambda_s + \mu \big ).$}
$$
Note that $|Q|^\alpha_s = Q^\alpha_s$, since $Q_s(|x|) > 0$ for $s \in [s_0,s_*)$ by Lemma \ref{lem:Qposi}.

By Lemmas \ref{lem:pohobounds} and \ref{lem:gron}, we have the uniform lower bound $\lambda_s \gtrsim 1$. In particular, we can choose $\mu > 0$ fixed and independent of $s$ such that $\lambda_s \geq 2\mu$ for $s\in [s_0, s_*)$. Next, we claim that the positive part $f_{s}^+ := \max \{ f_s, 0\}$ has compact support such that 
\begin{equation} \label{eq:fsupp}
f_{s}^+(x) \equiv 0 \quad \mbox{for $|x| \geq \frac{1}{2} R_0$},
\end{equation}
 where $R_0>0$ is some large constant independent of $s$. Indeed, the functions $Q_s = Q_s(|x|) > 0$ are even and positive. Hence, by Lemma \ref{lem:symm}, we deduce that each function $Q_s(|x|)$ is strictly decreasing in $|x|$. Also, we recall the uniform bound $\| Q_s \|_2 \lesssim 1$ from Lemma \ref{lem:gron}. Hence, for any $|x| > 0$, 
  $$
 |x| |Q_s(x)|^2 \leq \frac{1}{2} \int_{|y| \leq |x|} |Q_s(y)|^2 \, dy \leq \frac{1}{2} \int |Q_s(y)|^2 \,dy \lesssim 1.
 $$ 
Therefore $Q_s(y) \lesssim |x|^{-1/2}$ for $|x| > 0$. Moreover, we have $-\lambda_s + \mu \leq -\mu < 0$ and $Q_s(|x|) > 0$. These facts imply that \eqref{eq:fsupp} holds with some large constant $R_0 >0$ independent of $s$.

Next, by Lemma \ref{lem:resolvent_p}, we conclude that the kernel $G_{s,\mu}$ of the resolvent $((-\DD)^s+\mu)^{-1}$ is given by a positive function $G_{s,\mu}(x)>0$ that satisfies the uniform bound
\begin{equation}  \label{ineq:green}
 0 < G_{s,\mu}(x) \lesssim \frac{1}{|x|} \quad \mbox{for $|x| >0$}.
\end{equation}
Since $|x-y| \geq \frac{1}{2} |x|$ when $|x| \geq R_0$ and $|y| \leq \frac{1}{2} R_0$, we can combine \eqref{eq:fsupp} and \eqref{ineq:green} to find the following bound 
\begin{equation*}
0 < Q_s(|x|)  \leq \int_{|y| \leq \frac{1}{2} R_0} G_{s,\mu}(x-y) f_{s}^+(y) \, dy \lesssim \frac{1}{|x|} \int_{|y| \leq \frac{1}{2}R_0} f^+_{s}(y) \, dy  \lesssim  \frac{1}{|x|} ,
\end{equation*}
for $|x| \geq R_0$. In the last step, we used the uniform bounds $\| Q_s \|_{\alpha+2} \lesssim 1$ and $\| Q_s \|_2 \lesssim 1$ together with H\"older's inequality to obtain that
$$
\int_{|y| \leq \frac{1}{2} R_0} f_s^+(y) \, dy \leq  R_0^{\frac{1}{\alpha+2}} \| Q_s \|^{\alpha+1}_{\alpha+2} + R_0^{\frac{1}{2}} |\lambda_s - \mu|  \| Q_s \|_2 \lesssim 1.
$$
This completes the proof of Lemma \ref{lem:decay}. \end{proof}

We are now in the position to derive the following key fact.

\begin{lemma} \label{lem:cv}
Let $\{ s_n \}_{n=1}^\infty \subset [s_0,s_*)$ be a sequence such that $s_n \to s_*$. Furthermore, we suppose that $Q_{s_n} = Q_{s_n}(|x|) > 0$ are positive functions. Then (after possibly passing to a subsequence) we have
$$
\mbox{$Q_{s_n} \to Q_*$ in $L^2(\R) \cap L^{\alpha+2}(\R)$ and $\lambda_{s_n} \to \lambda_*$}, 
$$
where $\lambda_* > 0$ and $Q_*=Q_*(|x|) > 0$ satisfy
$$
(-\DD)^{s_*} Q_* + \lambda_* Q_* - Q_*^{\alpha+1} = 0 .
$$
\end{lemma}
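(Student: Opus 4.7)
The plan is to extract a limit from the sequence $\{Q_{s_n}, \lambda_{s_n}\}$ by standard weak compactness, and then upgrade this to strong $L^2 \cap L^{\alpha+2}$ convergence using the uniform tail estimate from Lemma \ref{lem:decay}. First, from Lemmas \ref{lem:pohobounds} and \ref{lem:gron} one has $\int |(-\DD)^{s_n/2} Q_{s_n}|^2 + \int |Q_{s_n}|^2 \lesssim 1$, and since $s_n \geq s_0$, splitting the Fourier integral at $|\xi|=1$ yields the uniform bound $\|Q_{s_n}\|_{H^{s_0}} \lesssim 1$. Combined with $\lambda_{s_n}\in[c,C]$ for some $c,C>0$, we may pass to a subsequence with $Q_{s_n}\weakto Q_*$ weakly in $H^{s_0}(\R)$, pointwise a.e., and $\lambda_{s_n}\to\lambda_*\in[c,C]$, so $\lambda_*>0$ automatically.

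Second, I would upgrade to strong convergence in $L^2(\R)\cap L^{\alpha+2}(\R)$. Rellich's theorem gives $Q_{s_n}\to Q_*$ strongly in $L^2(B_R)\cap L^{\alpha+2}(B_R)$ on any ball $B_R\subset\R$, where the compact embedding $H^{s_0}(B_R)\hookrightarrow L^{\alpha+2}(B_R)$ is available precisely because $\alpha<\amax(s_0)$ enforces $\alpha+2<2/(1-2s_0)$ in the delicate case $s_0<1/2$. The tails are controlled by Lemma \ref{lem:decay}: for $R\geq R_0$,
$$
\int_{|x|\geq R} |Q_{s_n}|^p\,dx \;\lesssim\; \int_R^\infty x^{-p}\,dx \;\lesssim\; R^{1-p} \quad (p=2,\,\alpha+2),
$$
uniformly in $n$, and the same bound transfers to $Q_*$ via Fatou and a.e.\ convergence. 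Local strong convergence plus uniform smallness of tails then yields $Q_{s_n}\to Q_*$ strongly in $L^2(\R)\cap L^{\alpha+2}(\R)$. In particular, $\|Q_*\|_{\alpha+2}^{\alpha+2}=\|Q_{s_0}\|_{\alpha+2}^{\alpha+2}>0$, so $Q_*\not\equiv 0$.

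Third, I would pass to the limit in the equation. For any $\varphi\in C_c^\infty(\R)$ we have $\langle (-\DD)^{s_n} Q_{s_n},\varphi\rangle=\langle Q_{s_n},(-\DD)^{s_n}\varphi\rangle$; since $|\xi|^{2s_n}|\widehat\varphi(\xi)|^2$ is dominated by an integrable majorant (using $\widehat\varphi\in\mathcal{S}$), dominated convergence in Fourier space gives $(-\DD)^{s_n}\varphi\to(-\DD)^{s_*}\varphi$ in $L^2$, and hence $(-\DD)^{s_n}Q_{s_n}\to(-\DD)^{s_*}Q_*$ in $\mathcal{D}'(\R)$. Strong $L^{\alpha+2}$ convergence together with $\|Q_{s_n}^{\alpha+1}\|_{(\alpha+2)/(\alpha+1)}=\|Q_{s_n}\|_{\alpha+2}^{\alpha+1}$ ensures $Q_{s_n}^{\alpha+1}\to Q_*^{\alpha+1}$ in $L^{(\alpha+2)/(\alpha+1)}(\R)$ by Vitali's theorem (using a.e.\ convergence along a subsequence). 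These facts combine to show that $Q_*$ satisfies $(-\DD)^{s_*}Q_*+\lambda_*Q_*-Q_*^{\alpha+1}=0$ in the distributional sense, which by the standard bootstrap of Proposition \ref{prop:Q}(iii) promotes to $H^{2s_*+1}(\R)$.

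Finally, positivity of $Q_*$ follows in two steps. The a.e.\ pointwise limit of the strictly positive, even, radially decreasing $Q_{s_n}(|x|)$ is an even, nonnegative function $Q_*(|x|)\geq 0$, and $Q_*\not\equiv 0$ by the conservation law above. Rewriting the limiting equation as $Q_*=((-\DD)^{s_*}+\lambda_*)^{-1}Q_*^{\alpha+1}$, and invoking the strict positivity of the resolvent kernel from Lemma \ref{lem:resolvent_p}, we conclude $Q_*(|x|)>0$ on $\R$. The main obstacle in the argument is the global strong convergence in $L^2(\R)$: the even symmetry alone does not furnish compactness in one space dimension, and it is precisely the uniform pointwise decay estimate of Lemma \ref{lem:decay}, which itself relies on positivity, monotonicity, and the lower bound $\lambda_s\gtrsim 1$, that rescues the argument.
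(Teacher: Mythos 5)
Your proof is correct and takes essentially the same route as the paper's: uniform $H^{s_0}$ bounds plus local Rellich compactness, with the uniform decay $Q_{s_n}(|x|)\lesssim |x|^{-1}$ from Lemma \ref{lem:decay} controlling the tails, the conservation law giving $Q_*\not\equiv 0$, and the positivity of the resolvent kernel (Lemma \ref{lem:resolvent_p}) giving $Q_*>0$. The only cosmetic difference is in the $L^{\alpha+2}$ step — the paper interpolates the strong $L^2$ convergence against a uniform $L^p$ bound with $p>\alpha+2$ from Sobolev, while you use the (subcritical, since $\alpha<\amax(s_0)$) compact embedding into $L^{\alpha+2}$ locally together with the tail decay — and both are valid.
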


\begin{remarks}
{\em 1.) 
One key step in the proof of Lemma \ref{lem:cv} will be to establish {\em strong convergence} of $\{ Q_{s_n} \}_{n=1}^\infty$ in $L^2(\R)$. Here, the pointwise decay bound from Lemma \ref{lem:decay} will guarantee this fact. Note that the (weaker) uniform decay estimate $Q_{s_n}(x) \lesssim |x|^{-1/2}$ (see proof of Lemma \ref{lem:decay}) is not sufficient to conclude strong convergence of $\{ Q_{s_n} \}_{n=1}^\infty$ in $L^2(\R)$. Abstractly speaking, the gain of relative compactness of $\{ Q_n \}_{n=1}^\infty$ in $L^2(\R)$ is due to the fact that the $Q_{s_n}$ solve equation \eqref{eq:phi} with a uniform bound on the nonlinear eigenvalues $\lambda_{s_n} \sim 1$. 

2.) By bootstrapping arguments, we can in fact derive strong convergence of $\{ Q_{s_n} \}_{n=1}^\infty$ in $H^{2s_*}(\R)$, once strong convergence in $L^2(\R) \cap L^{\alpha+2}(\R)$ is known. However, we do not need this refinement in the following. Hence we omit its proof.
}
\end{remarks}

\begin{proof}
Define the sequences $\{ Q_n \}_{n=1}^\infty$ with $Q_n = Q_{s_n}$ and $\{ \lambda_n \}_{n=1}^\infty$ with $\lambda_n = \lambda_{s_n}$.

First, by combining Lemmas \ref{lem:pohobounds} and \ref{lem:gron}, we obtain the uniform bound $1 \lesssim \lambda_n \lesssim 1$. Thus, after passing to a subsequence, we can assume that $\lambda_n \to \lambda_*$ with some positive limit $\lambda_* > 0$.

From Lemmas \ref{lem:pohobounds} and \ref{lem:gron} we have  the a-priori bound $\| Q_n \|_{H^{s_n}} \lesssim 1$. Since $s_n \geq s_0$, this implies in particular that $\| Q_n \|_{H^{s_0}} \lesssim 1$ holds. Hence (after passing to a subsequence) we can assume that $Q_n \weakto Q_*$ weakly in $H^{s_0}(\R)$ and $Q_n(x) \to Q_*(x)$ pointwise a.\,e.~in $\R$. Moreover, by local Rellich-Kondratchov compactness, we deduce that $Q_n \to Q_*$ in $L^2_{\mathrm{loc}}(\R)$. To upgrade this fact to strong convergence in $L^2(\R)$ itself, we recall that Lemma \ref{lem:decay} implies the uniform decay estimate
\begin{equation} \label{ineq:uniformL2}
|Q_n(x)| \lesssim \frac{1}{|x|} Ê\quad \mbox{for $|x| \geq R_0$ and $n \geq 1$}.
\end{equation}
where $R_0 > 0$ is independent of $n$. Using this uniform decay, we easily derive strong convergence of $\{ Q_n \}_{n=1}^\infty$ in $L^2(\R)$. Indeed, let $\eps > 0$ be given. Choose $R_\eps \geq R_0$ large enough such that $\int_{|x| > R_\eps} |x|^{-2} \leq \eps^2$ and $\int_{|x| > R_\eps} |Q_*|^2 \leq \eps^2$. Since moreover $Q_n \to Q_*$ in $L^2_{\mathrm{loc}}(\R)$, there exists $n_0  \geq 1$ such that $\int_{|x| \leq R_\eps} |Q_n- Q_*|^2 \leq \eps^2$ for $n \geq n_0$. Using the pointwise bound \eqref{ineq:uniformL2} and the triangle inequality, we thus conclude
$$
\| Q_n - Q_* \|_{L^2(\R)} \leq \| Q_n - Q_* \|_{L^2(|x| \leq R_{\eps})}+ \| Q_n - Q_* \|_{L^2(|x| > R_\eps)} \lesssim \eps, $$
for all $n \geq n_0$. This shows that $Q_n \to Q_*$ strongly in $L^2(\R)$. 

To see that $Q_n \to Q_*$ strongly in $L^{\alpha+2}(\R)$, we first recall the uniform bound $\| Q_n \|_{H^{s_0}} \lesssim 1$. Using the condition $\alpha < \amax(s_0)$ and Sobolev inequalities, we deduce that $\| Q_n \|_p \lesssim 1$ for some $p > \alpha+2$ (with $p < \frac{2}{1-2s_0}$ if $s_0 \leq 1/2$). Thus, by interpolation, we deduce that $Q_n \to Q_*$ in $L^{\alpha+2}(\R)$ as well.

Finally, we show that the limit $Q_*=Q_*(|x|) > 0$ is a positive function which satisfies
\begin{equation} \label{eq:phistar}
(-\DD)^{s_*} Q_* + \lambda_* Q_* - Q_*^{\alpha+1}  = 0.
\end{equation} 
Indeed, the latter fact simply follows from passing to the limit in the equation satisfied by $Q_n$ together with the convergence properties derived above. Since $Q_n = Q_n(|x|) > 0$ and $Q_n(x) \to Q_*(x)$ pointwise a.\,e. in $\R$, we find that $Q_* = Q_*(|x|) \geq 0$ holds. Note that $Q_n \to Q_*$ in $L^{\alpha+2}(\R)$ and $\| Q_n \|_{\alpha+2} = \| Q_0 \|_{\alpha+2} \neq 0$ for all $n \in \mathbb{N}$. Hence $Q_* \not \equiv 0$ as well. Finally, we deduce positivity $Q_*(x) > 0$ by noting that $Q_* = ((-\DD)^{s_*} + \lambda_*)^{-1} Q_*^{\alpha+1}$ and using the positivity of the integral kernel of the resolvent $((-\DD)^{s_*} + \lambda_*)^{-1}$; see Lemma \ref{lem:resolvent_p}. 

This proof of Lemma \ref{lem:cv} is now complete. \end{proof}

As the one of the main results of this section, we now prove that any maximal branch $(Q_s, \lambda_s)$ extends to $s_*=1$, provided that $Q_{s_0}$ satisfies some explicit conditions (which in particular hold true if $Q_{s_0}$ is a ground state). 

\begin{prop}  \label{prop:global}
Let $0 < s_0 < 1$ and $0 < \alpha < \amax(s_0)$ be given. Suppose that $(Q_0, \lambda_0) \in X^\alpha \times \Rplus$ satisfies Assumption \ref{ass:branch} with $s=s_0$ and $\lambda = \lambda_0$. Furthermore, assume that $Q_{0} = Q_0(|x|) > 0$ is positive and that the corresponding linearized operator $L_{+,0}$ satisfies $\cN_{-,\mathrm{even}}(L_{+,0}) = 1$. 

Then the corresponding maximal branch $(Q_s, \lambda_s) \in C^1([s_0, s_*); X^\alpha \times \Rplus)$ extends to $s_*=1$. Moreover, we have that
$$
\mbox{$Q_s \to Q_*$ in $L^2(\R) \cap L^{\alpha+2}(\R)$ and $\lambda_s \to \lambda_*$ as $s \to 1$},
$$
where $Q_* = Q_*(|x|) > 0$ is the unique solution of
\begin{equation*}
\left \{ \begin{array}{l} -\Delta Q_* + \lambda_* Q_* - Q_*^{\alpha+1} = 0, \\
Q_* = Q_*(|x|) > 0, \quad Q_* \in L^2(\R) \cap L^{\alpha+2}(\R)  , \end{array} \right .
\end{equation*}
and $\lambda_* > 0$ is given by
\begin{equation*} \label{eq:lambda_limit}
\lambda_* = \left (  \frac{\alpha}{2(\alpha+2)}   \frac{  \int |Q_{0}|^{\alpha+2} } {  \int |\nabla P|^2 } \right )^{\frac{2\alpha}{\alpha+4}} .
\end{equation*}
Here $P=P(|x|)> 0$ denotes the unique positive, even solution in $C^2(\R)$ that satisfies $-\Delta P+ P - P^{\alpha+1} = 0$ with $P \to 0$ as $|x| \to \infty$.
\end{prop}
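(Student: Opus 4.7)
The plan is to prove $s_*=1$ by contradiction, extract the limit $(Q_*,\lambda_*)$ via the compactness result of Lemma~\ref{lem:cv}, and identify $\lambda_*$ by passing to the limit in the Pohozaev identities of Lemma~\ref{lem:pohoidentities}. The central point is to propagate the spectral hypothesis $\cN_{-,\mathrm{even}}(L_{+,s}) = 1$ all the way to the endpoint, so that Lemma~\ref{lem:kernel} still applies at the limit and allows one to restart the implicit function theorem there.

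Suppose for contradiction that $s_* < 1$, and take any sequence $s_n \nearrow s_*$. Positivity is preserved along the branch by Lemma~\ref{lem:Qposi}; the uniform bounds of Lemmas~\ref{lem:pohobounds} and \ref{lem:gron} give $\|Q_{s_n}\|_{H^{s_0}} \lesssim 1$ and $1 \lesssim \lambda_{s_n} \lesssim 1$; Lemma~\ref{lem:decay} provides the uniform decay $Q_{s_n}(|x|) \lesssim |x|^{-1}$. Then Lemma~\ref{lem:cv} yields (after passing to a subsequence) $Q_{s_n}\to Q_*$ in $L^2(\R)\cap L^{\alpha+2}(\R)$ and $\lambda_{s_n}\to \lambda_* > 0$, with $Q_*=Q_*(|x|)>0$ an even positive solution of $(-\DD)^{s_*}Q_* + \lambda_* Q_* - Q_*^{\alpha+1} = 0$.

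The crucial step is to verify Assumption~\ref{ass:branch} at $(Q_*, \lambda_*)$ with $s=s_*$, so that Proposition~\ref{prop:local} produces a continuation beyond $s_*$, contradicting maximality. I would argue in two moves. First, $\cN_{-,\mathrm{even}}(L_{+,s}) = 1$ along the whole branch: the lower bound $\geq 1$ is automatic from $\langle Q_s, L_{+,s} Q_s\rangle = -\alpha\|Q_s\|_{\alpha+2}^{\alpha+2} < 0$, while the upper bound, initially the hypothesis $\cN_{-,\mathrm{even}}(L_{+,0}) = 1$, is preserved because the even Morse index can only jump when an even eigenvalue crosses zero, and Assumption~\ref{ass:branch} (which defines the maximal branch) excludes exactly such crossings. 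Second, the strong convergence $Q_{s_n}\to Q_*$ in $L^{\alpha+2}(\R)$ together with $\lambda_{s_n}\to\lambda_*$ yields, by the same argument as in Step~1 of the proof of Lemma~\ref{lem:Qposi}, norm-resolvent convergence $L_{+,s_n}\to L_{+,*}$ on $L^2(\R)$. Combined with the automatic lower bound $\cN_{-,\mathrm{even}}(L_{+,*}) \geq 1$ and standard spectral perturbation theory, this gives $\cN_{-,\mathrm{even}}(L_{+,*}) = 1$. Lemma~\ref{lem:kernel} then applies to the even positive $Q_*$ and yields $\ker L_{+,*} = \mathrm{span}\{Q_*'\}$, so $L_{+,*}$ is invertible on $L^2_{\mathrm{even}}(\R)$; i.\,e., Assumption~\ref{ass:branch} holds at $(Q_*, \lambda_*, s_*)$. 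Applying Proposition~\ref{prop:local} at this point contradicts the maximality of $s_*$, and therefore $s_* = 1$.

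Once $s_*=1$ is established, a further application of Lemma~\ref{lem:cv} (now with $s_n\nearrow 1$) gives a positive even limit $Q_*$ solving $-\DD Q_* + \lambda_* Q_* - Q_*^{\alpha+1} = 0$; uniqueness of such a $Q_*$ for prescribed $\lambda_*>0$ is the classical ODE result of Kwong~\cite{Kw89}. To pin down $\lambda_*$, I would pass $s\to 1$ in the Pohozaev identities of Lemma~\ref{lem:pohoidentities} (with $a_1=(\alpha+4)/4$, $b_1=\alpha/4$) to obtain $\tfrac12\int|\nabla Q_*|^2 = \tfrac{\alpha}{2(\alpha+2)}\int|Q_*|^{\alpha+2}$, then use the conservation law $\int|Q_*|^{\alpha+2} = \int|Q_0|^{\alpha+2}$ together with the scaling $Q_*(x) = \lambda_*^{1/\alpha} P(\lambda_*^{1/2}x)$, which gives $\int|\nabla Q_*|^2 = \lambda_*^{(\alpha+4)/(2\alpha)}\int|\nabla P|^2$. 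Solving for $\lambda_*$ produces the claimed closed-form expression; since every convergent subsequence has the same limit, the whole net $(Q_s,\lambda_s)$ converges. The main obstacle I anticipate is the spectral perturbation step: one must argue that the variable fractional exponent $s_n$ combined with the variable potential $(\alpha+1)|Q_{s_n}|^\alpha$ genuinely produces norm-resolvent convergence of $L_{+,s_n}$, and that no even eigenvalue of $L_{+,s_n}$ escapes to zero in the limit, which requires uniform $K_s$-control of the potentials along the branch.
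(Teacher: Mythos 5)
Your proposal follows essentially the same route as the paper's proof: propagate $\cN_{-,\mathrm{even}}(L_{+,s})=1$ along the branch by the no-zero-crossing/continuity argument, use Lemma \ref{lem:cv} at $s_*$, pass to the limit of the linearized operators in the norm-resolvent sense (via the $L^p$-convergence of the potentials as in Step~1 of Lemma \ref{lem:Qposi}) to get $\cN_{-,\mathrm{even}}(L_{+,*})=1$, invoke Lemma \ref{lem:kernel} and Proposition \ref{prop:local} to contradict maximality, and then identify $(Q_*,\lambda_*)$ through the ODE uniqueness, the conservation law, the scaling relation and the limiting Pohozaev identity, concluding full convergence by sequence-independence of the limit. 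The only blemish is a factor-of-two slip in the limiting Pohozaev identity (with $b_1=\alpha/4$ it reads $\int|\nabla Q_*|^2=\frac{\alpha}{2(\alpha+2)}\int|Q_*|^{\alpha+2}$, without the extra $\tfrac12$ on the left), which is immaterial to the argument and, corrected, yields exactly the stated formula for $\lambda_*$.
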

 
\begin{remarks}{\em 
1.) The unique solution $$P(x)=\frac{(\sigma+1)^\frac{1}{2\sigma}}{\cosh^{\frac{1}{\sigma}}(\sigma x)}$$ where $\sigma = \frac{\alpha}{2}$ is known in closed form. However, this fact has no relevance in the proof below.

2.) Note that $\lambda_* > 0$ only depends on $\alpha$ and the quantity $\int | Q_{0} |^{\alpha+2}$. 
}
\end{remarks}

\begin{proof}
Let $(Q_s, \lambda_s) \in C^1([s_0,s_*); X^\alpha \times \Rplus)$ be the maximal branch with $s_* \in (s_0, 1]$. By Lemma \ref{lem:Qposi}, the functions $Q_s = Q_s(|x|) > 0$ are positive for all $s \in [s_0,s_*)$. Next, we consider the linearized operators along the branch, i.\,e.,
$$
L_{+,s} = (-\DD)^s + \lambda_s - (\alpha+1) Q_s^{\alpha} .
$$ 
We claim that the Morse index of $L_{+,s}$ acting on $L^2_{\mathrm{even}}(\R)$ is constant. That is,
$$
\cN_{-,\mathrm{even}} (L_{+,s}) = 1, \quad \mbox{for $s \in [s_0,s_*)$}.
$$ 
Indeed, this follows from the initial assumption that $\cN_{-,\mathrm{even}} (L_{+,s_0}) = 1$ and a continuity argument: Similar as in the proof of Lemma \ref{lem:Qposi}, we deduce that $L_{+,s} \to L_{+,\tilde{s}}$ in the norm resolvent sense as $s \to \tilde{s}$. Hence, by continuity of eigenvalues of $L_{+,s}$, any change of the Morse index along the branch would imply that 0 must be an eigenvalue of $L_{+,s}$ acting on $L^2_{\mathrm{even}}(\R)$ for some $s \in (s_0, s_*)$. But this contradicts Assumption \ref{ass:branch}.

Suppose now that $\{ s_n \}_{n=1}^\infty \subset [s_0, s_*)$ be a sequence such that $s_n \to s_*$. Define the sequences $\{ Q_{n} \}_{n=1}^\infty \subset X^\alpha$ and $\{ \lambda_n \}_{n=1}^\infty \subset \Rplus$ by $Q_n = Q_{s_n}$ and $\lambda_n = \lambda_{s_n}$ for $n \in \N$. 

Next, by Lemma \ref{lem:cv} and after passing to a subsequence if necessary, we can assume that $Q_n \to Q_*$ in $L^2(\R)\cap L^{\alpha+2}(\R)$ and $\lambda_n \to \lambda_*$ for some $Q_*(|x|) >0$ and $\lambda_* > 0$ satisfying
$$
(-\DD)^{s_*} Q_* + \lambda_* Q_* - Q_*^{\alpha+1} = 0.
$$

Next, we prove that $s_* = 1$ holds. Suppose on the contrary that $s_* < 1$ was true. We consider the sequence $\{ L_{+,n} \}_{n=1}^\infty$ of self-adjoint operators given by
$$
L_{+,n} = (-\DD)^{s_n} + \lambda_n - (\alpha+1) Q_n^{\alpha} ,
$$
acting on $L^2_{\mathrm{even}}(\R)$. Note that $\|Q_n \|_{H^{s_0}} \lesssim 1$ and $Q_n \to Q_*$ in $L^2(\R)$. Then, by adapting the proof of Lemma \ref{lem:Qposi}, we deduce that $Q_{n} \to Q_*$ in $L^p(\R)$ with some $p > 1/2s_0 \geq 1/2s_*$. In particular, this implies that $L_{+,n} \to L_{+,*}$ in norm resolvent sense, where
$$
L_{+,*} = (-\DD)^{s_*} + \lambda_* - (\alpha+1) Q_*^{\alpha}.
$$
Since the Morse index is lower semi-continuous with respect to the norm resolvent topology, we conclude that 
$$1 = \liminf_{n \to \infty} \cN_{-,\mathrm{even}} (L_{+,n}) \geq \cN_{-, \mathrm{even}} (L_{+,*}).$$ On the other hand, we easily calculate that $\langle Q_*, L_{+,*} Q_* \rangle = -\alpha \int |Q_*|^{\alpha+2} < 0$. By the min-max principle, we deduce that $L_{+,*}$ acting on $L^2_{\mathrm{even}}(\R)$ has at least one eigenvalue that is strictly negative. Thus we conclude that
$$
\cN_{-,\mathrm{even}}  (L_{+,*}) =  1.
$$
Now we can apply Lemma \ref{lem:kernel} to deduce that $L_{+,*}$ is invertible on $L^2_{\mathrm{even}}(\R)$. Hence $(Q_*,\lambda_*) \in X^\alpha \times \Rplus$ satisfies Assumption \ref{ass:branch}. Hence, by Proposition \ref{prop:local}, we can extend the branch $(Q_s, \lambda_s)$ beyond $s_*$, which contradicts the maximality property of $s_*$.  Hence the assumption $s_* < 1$ leads to a contradiction.

Now we have shown that $s_* = 1$ holds. By Lemma \ref{lem:cv}, we see that $Q_n \to Q_*$ in $L^2(\R) \cap L^{\alpha+2}(\R)$ and $\lambda_n \to \lambda_* > 0$, where $Q_* = Q_*(|x|) > 0$ solves the nonlinear equation
\begin{equation} \label{eq:ode}
- \Delta Q_* + \lambda_* Q_*  - Q_*^{\alpha+1} = 0 .
\end{equation}
Note that, by bootstrapping this equation for $Q_* \in L^2(\R) \cap L^{\alpha+2}(\R)$, we conclude that $Q_* \in H^2(\R)$. Using this fact , we deduce that in fact $Q_* \in C^2(\R)$ holds. Next, we recall the well-known fact that $-\Delta P + P -  P^{\alpha+1} = 0$ has a unique positive solution $P = P(|x|) > 0$ in $C^2(\R)$ with $P \to 0$ as $|x| \to \infty$; see the remark above. By a simple scaling argument, we infer that
\begin{equation} \label{eq:unique}
Q_*(x) = \lambda^{\frac{1}{\alpha}}_* P(\lambda_*^{\frac{1}{2}} x) ,
\end{equation}
Next, we integrate \eqref{eq:ode} against $\frac{1}{2} Q_* + x \cdot \nabla Q_*$, which gives the Pohozaev identity
\begin{equation} \label{eq:pohos}
  \int | \nabla Q_* |^2 = \frac{\alpha}{2 (\alpha+2)} \int | Q_* |^{\alpha+2}.
\end{equation}
Since $\int | Q_n |^{\alpha+2} = \int | Q_{s_0} |^{\alpha+2}$ for all $n \geq 1$ and $Q_n \to Q_*$ in $L^{\alpha+2}(\R)$, we find that 
\begin{equation} \label{eq:alplimit}
\int | Q_* |^{\alpha+2} = \int | Q_{s_0} |^{\alpha+2}.
\end{equation}
Using now \eqref{eq:unique}, \eqref{eq:pohos} and \eqref{eq:alplimit}, an elementary calculation shows that $\lambda_* > 0$ is given by the formula in Proposition \ref{prop:global}. In particular, this shows that the limit $\lambda_* >0$ is independent of the sequence $\{ s_n \}_{n=1}^\infty$. Furthermore, by uniqueness of $Q_*(x)$ with $\lambda_* > 0$ given, we conclude that the limit $Q_* \in H^2(\R)$ is also independent of $\{s_n\}_{n=1}^\infty$. Hence $Q_s \to Q_*$ in $L^2(\R) \cap L^{\alpha+2}(\R)$ and $\lambda_s \to \lambda_*$ as $s \to 1$. 

The proof of Proposition \ref{prop:global} is now complete.\end{proof}

\subsection{Proof of Theorem \ref{thm:unique}} 

\label{subsec:unique}

First, we prove uniqueness of ground states and we argue by contradiction as follows.

Let $0 < s_0 < 1$ and $0 < \alpha < \amax(s_0)$ be given. Recall our definition of the real Banach space $X^\alpha$ of real-valued and even functions in $L^2(\R) \cap L^{\alpha+2}(\R)$; see \eqref{def:X}. 

Suppose that $Q = Q_0(|x|) > 0$ and $\widetilde{Q}_0 =\widetilde{Q}_0(|x|) > 0$ are two ground states for problem \eqref{eq:phi} such that $Q_0 \not \equiv \widetilde{Q}_0$. By Theorem \ref{thm:nondeg}, we have that $Q_0 \in X^\alpha$ and $\widetilde{Q}_0 \in X^\alpha$ both satisfy Assumption \ref{ass:branch} with $s=s_0$ and $\lambda = 1$. Hence, by Proposition \ref{prop:global}, there exist two global branches 
$$\mbox{$(Q_s, \lambda_s) \in C^1([s_0,1); X^\alpha \times \Rplus)$ and $(\widetilde{Q}_s, \widetilde{\lambda}_s) \in C^1([s_0,1); X^\alpha \times \Rplus)$},$$
 which solve equation \eqref{eq:phis} and we have $(Q_{s_0}, \lambda_{s_0)} = (Q_0, 1)$ and $(\widetilde{Q}_{s_0}, \widetilde{\lambda}_{s_0}) = (\widetilde{Q}_0, 1)$. Note that, by  the local uniqueness stated in Proposition \ref{prop:local}, the branches $(Q_s, \lambda_s)$ and $(\widetilde{Q}_s, \widetilde{\lambda}_s)$ cannot intersect. Moreover, by Proposition \ref{prop:global}, we have the following facts.
\begin{itemize}
\item $Q_s \to Q_*$ in $L^2(\R) \cap L^{\alpha+2}(\R)$ and $\lambda_s \to \lambda_*$ as $s \to 1$.
\item $\widetilde{Q}_s \to \widetilde{Q}_*$ in $L^2(\R)\cap L^{\alpha+2}(\R)$ and $\widetilde{\lambda}_s \to \widetilde{\lambda}_*$ as $s \to 1$.
\end{itemize}
Here $\lambda_* > 0$ and $\widetilde{\lambda}_* > 0$ are given by the formula in Proposition \ref{prop:global}. Furthermore, the functions $Q_* = Q_*(|x|) > 0$ and $\widetilde{Q}_*=\widetilde{Q}_*(|x|) > 0$ are the unique even and positive solutions in $L^2(\R) \cap L^{\alpha+2}(\R)$ of the nonlinear equations
$$
-\Delta Q_* + \lambda_* Q_* - Q_*^{\alpha+1} = 0, \quad -\Delta \widetilde{Q}_* + \widetilde{\lambda}_* \widetilde{Q}_* - \widetilde{Q}_*^{\alpha+1} =  0,
$$ 
respectively. In view of Proposition \ref{prop:global}, we can conclude the equality
$$\lambda_* = \widetilde{\lambda}_*,$$ 
provided we show that $\int | Q_0 |^{\alpha+2} = \int | \widetilde{Q}_0 |^{\alpha+2}$. Indeed, the latter inequality can be seen as follows. From Lemma \ref{lem:pohoidentities} we find that $Q_0 \in H^{s_0}(\R)$ satisfies the Pohozaev idenitites
\begin{equation*}
\frac{1}{2} \int | Q_{s_0} |^2 = \frac{a_{s_0}}{\alpha+2} \int | Q_0 |^{\alpha+2}, \quad \frac{1}{2} \int (-\DD)^{\frac{s_0}{2}} Q_0 |^2 = \frac{b_{s_0}}{\alpha+2} \int | Q_0 |^{\alpha+2} .
\end{equation*}
where $a_{s_0} = \frac{\alpha}{4s_0}(2s_0-1)+1$ and $b_{s_0} = \frac{\alpha}{4s_0}$. Moreover, by assumption, the ground state $Q_0 \in H^{s_0}(\R)$ optimizes the interpolation estimate \eqref{ineq:GN}. Thus we also find that
\begin{equation*}
\int | Q_0 |^{\alpha+2} = C_{\alpha,s_0} \left ( \int | (-\DD)^{\frac{s_0}{2}} Q_0 |_2 \right) ^{\frac{\alpha}{4s_0}} \left ( \int | Q_0 |^2 \right )^{\frac{\alpha}{4s_0}(2s_0-1) +1},
\end{equation*}
with $C_{\alpha, s_0} > 0$ being the optimal constant for \eqref{ineq:GN} when $s=s_0$. Combining now the last three equations, we conclude that $\int | Q_0 |^{\alpha+2} = f(\alpha, s_0)$, for some function $f(\alpha,s_0)$  that only depends on $\alpha$ and $s_0$. By repeating the same arguments for the ground state $\widetilde{Q}_{s_0}$, we thus deduce that the equality $\int |Q_{0}|^{\alpha+2} = \int |\widetilde{Q}_{0}|^{\alpha+2}$ and hence $\lambda_* = \widetilde{\lambda}_*$.

Since $\lambda_* = \widetilde{\lambda}_*$, the uniqueness result for the limiting equation as stated in Proposition \ref{prop:global} implies that $Q_* = \widetilde{Q}_*$ as well. Next we remark that $Q_*$ has a nondegenerate linearized operator $L_+ = -\frac{d^2}{dx^2} + \lambda_* -(\alpha+1) Q_*^{\alpha}$; see, e.\,g., \cite{ChGuNaTs07}). Hence we can invoke an implicit function argument at around $(Q_*, \lambda_*)$ to construct a locally unique branch $(Q_s, \lambda_s) \in C^1((1-\delta,1]; X^\alpha \times \Rplus)$, with some $\delta > 0$ small, such that 
$$
(-\DD)^s Q_s + \lambda_s Q_s - Q_s^{\alpha+1} = 0 ,
$$
and $(Q_s, \lambda_s)$ is the unique solution for $s \in (1-\delta, 1]$ in the neighborhood $N = \{ (Q, \lambda) \in X^\alpha \times \Rplus : \| Q - Q_* \|_{X^\alpha} + | \lambda - \lambda_* | < \eps \}$, where $\eps > 0$ is a small constant. Since $Q_s \to Q_*$ and $\widetilde{Q}_s \to Q_*$ in $L^2(\R) \cap L^{\alpha+2}(\R)$ and $\lambda_s \to \lambda_*$ and $\widetilde{\lambda}_s \to \lambda_*$ both as $s \to 1$, we conclude that the branches $(Q_s, \lambda_s)$ and $(\widetilde{Q}_s, \widetilde{\lambda}_s)$ must intersect at some $s \in [s_0, 1)$. But this is a contradiction to the local uniqueness of the branches $(Q_s, \lambda_s)$ and $(\widetilde{Q}_s, \widetilde{\lambda}_s)$, as given by Proposition \ref{prop:local}. This proves uniqueness of ground states as stated in Theorem \ref{thm:unique}.

Finally, we establish uniqueness of optimizers for the Gagliardo-Nirenberg inequality \eqref{ineq:GN}. Here we simply note that, by rearrangement inequalities, we have 
\begin{equation} \label{ineq:Frank}
J^{s,\alpha}(v^*) \leq J^{s,\alpha}(v) ,
\end{equation}
where $v^* = v^*(|x|) \geq 0$ denotes the symmetric-decreasing rearrangement of $v \in H^s(\R)$. From \cite{FrSe08} we see that strict inequality holds in \eqref{ineq:Frank}, unless $v(x)$ equals $v^*(|x|)$ up to a complex phase and spatial translation. Since $v$ minimizes $J^{s,\alpha}$ and so does $v^*$, we deduce that $v^* = v^*(|x|) \geq 0$ solves the corresponding Euler-Lagrange equation
$$
(-\DD)^s v^* + \lambda v^* - \mu (v^*)^{\alpha+1} = 0,
$$ 
with some positive constants $\lambda > 0$ and $\mu > 0$. By a simple rescaling argument and uniqueness of the ground state $Q=Q(|x|) >0$, we see that $v^*(|x|) = a Q(b |x|)$ for some constants $a > 0$ and $b > 0$.

The proof of Theorem \ref{thm:unique} is now complete. \hfill $\blacksquare$

\begin{appendix}

\section{Some Uniform Bounds}

In this section, we derive some uniform bounds (with respect to $s$) for the heat kernel $e^{-t (-\DD)^s}$ with $0 < s < 1$. Moreover, as a a direct consequence, we obtain corresponding uniform bounds for the resolvent $((-\DD)^s + \lambda)^{-1}$. 

Although many of the following bounds can be directly inferred from the literature for each $0 < s <1$ individually, we were not able to find a reference, which yields the desired bounds in a uniform fashion for $s_0 \leq s < 1$ with $s_0 > 0$ fixed. Also, we mention that it is straightforward to generalize the following arguments to any space dimension. However, due to notational convenience, we have decided to focus on the one-dimensional case in what follows.   

Consider the heat kernel $e^{-t (-\DD)^s}$ on $\R$ with $0 < s < 1$. That is, we consider the Fourier transform of $e^{-t |Ê\xi |^{2s} } \in L^1(\R)$ given by 
\begin{equation}
P^{(s)}(x,t) = \frac{1}{\sqrt{2 \pi}} \int_{\R} e^{-t |\xi|^{2s}} e^{-i \xi x} \, d\xi ,
\end{equation} 
where $t > 0$ is a parameter. Note the scaling property $P^{(s)}(x,t) = t^{-\frac{1}{2s}} P^{(s)}(t^{-\frac{1}{2s}} x, 1)$ for $x \in \R$ and $t >0$. Moreover, it is obvious that $P^{(s)}(x,t)$ is an even function of $x$. We first record the following known (but not completely obvious) positivity and monotonicity result.

\begin{lemma} \label{lem:Pt_unimodal}
Let $0< s < 1$ and $t > 0$ be fixed. Then $P^{(s)}(x,t) > 0$ for $x \in \R$ and $\frac{d}{dx} P^{(s)}(x,t) < 0$ for $x > 0$.
\end{lemma}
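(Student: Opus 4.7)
The strategy is the classical \emph{subordination principle}. Since $0<s<1$, the map $\lambda\mapsto \lambda^s$ is a Bernstein function on $[0,\infty)$ (its derivative $s\lambda^{s-1}$ is completely monotone), and consequently $\lambda\mapsto e^{-\lambda^s}$ is completely monotone. By Bernstein's theorem, there exists a probability measure $\mu_s$ on $[0,\infty)$ (namely, the one-sided $s$-stable distribution at time $1$) such that
$$
e^{-\lambda^s} \;=\; \int_{0}^{\infty} e^{-u\lambda}\, d\mu_s(u) \qquad \text{for all } \lambda\geq 0 .
$$
Replacing $\lambda$ by $t^{1/s}\lambda$ and then setting $\lambda=|\xi|^{2}$ gives
$$
e^{-t|\xi|^{2s}} \;=\; \int_{0}^{\infty} e^{-ut^{1/s}|\xi|^{2}}\, d\mu_s(u).
$$

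I would then invert the Fourier transform and apply Fubini (justified by the nonnegativity of all factors) to obtain the representation
$$
P^{(s)}(x,t) \;=\; \int_{0}^{\infty} \frac{1}{\sqrt{4\pi u\, t^{1/s}}}\, \exp\!\left(-\frac{x^{2}}{4u\, t^{1/s}}\right) d\mu_s(u).
$$
From this both assertions are immediate. Strict positivity $P^{(s)}(x,t)>0$ follows because the integrand is strictly positive for every $u>0$ and the measure $\mu_s$ is not a Dirac mass at $0$ (otherwise $e^{-\lambda^s}\equiv 1$). Differentiating under the integral sign, which is permissible by dominated convergence since for $x>0$ the Gaussian factor and its $x$-derivative decay fast enough as $u\to 0^{+}$ and are bounded uniformly as $u\to\infty$, yields
$$
\frac{\partial}{\partial x} P^{(s)}(x,t) \;=\; -\int_{0}^{\infty} \frac{x}{2u\, t^{1/s}\sqrt{4\pi u\, t^{1/s}}}\, \exp\!\left(-\frac{x^{2}}{4u\, t^{1/s}}\right) d\mu_s(u) \;<\; 0
$$
for $x>0$, as desired.

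The only nontrivial ingredient is the existence of the subordination measure $\mu_s$, i.e.\ Bernstein's theorem applied to $e^{-\lambda^s}$; all remaining steps are routine Gaussian computations. I would not expect any genuine obstacle here, since the representation via the Gaussian semigroup makes positivity and monotonicity transparent and, as a bonus, will later be reusable to produce the pointwise bounds on the resolvent kernel of $((-\Delta)^{s}+\lambda)^{-1}$ needed elsewhere (via $((-\Delta)^{s}+\lambda)^{-1}=\int_0^\infty e^{-\lambda t} e^{-t(-\Delta)^s}\,dt$).
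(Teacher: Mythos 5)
Your proof is correct and is essentially the paper's own argument: complete monotonicity of $e^{-\lambda^s}$ for $0<s<1$, Bernstein's theorem, and the resulting subordination of $e^{-t(-\DD)^s}$ to the Gaussian semigroup, from which strict positivity and strict decay in $|x|$ are immediate. (The only discrepancy is a harmless normalization constant: with the paper's $(2\pi)^{-1/2}$ Fourier convention the Gaussian factor reads $(2\tau)^{-1/2}e^{-x^2/(4\tau)}$ rather than your heat-kernel normalization, which affects neither conclusion.)
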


\begin{proof}
We give the following (fairly simple) proof, which mainly rests on Bernstein's theorem about the Laplace transform.

First, by the scaling property of $P^{(s)}(x,t)$, we can assume that $t=1$ holds. Now we consider the nonnegative function $f(E) = E^s$ on the positive real line $(0,\infty)$. Using that $0 < s < 1$, it is easy to see that $f'(E)$ is completely monotone (i.\,e., we have $(-1)^n f^{(n)}(E) \leq 0$ for all $n \in \N$). This fact, in turn, implies that the map $E \mapsto e^{- f(E)}$ is completely monotone as well. Hence, by Bernstein's theorem, we infer that $e^{- f(E)} = \int_0^\infty e^{-\tau E} d \mu_{f}(\tau)$ for some nonnegative finite measure $\mu_{f}$ depending on $f$. Setting $E = |\xi|^2$ and recalling the inverse Fourier transform of the Gaussian $e^{-\tau |\xi|^2}$, we obtain the following {\em ``subordination formula''} given by
\begin{equation} \label{eq:sub}
P^{(s)}(x,1) = \int_0^\infty \frac{1}{\sqrt{2 \tau}} e^{-x^2/(4\tau)} \, d\mu_{s}(\tau) 
\end{equation}
with some nonnegative finite measure $\mu_{s} \geq 0$ and $\mu_{s} \not \equiv 0$. From this formula we readily deduce that $P^{(s)}(x,1) > 0$ for $x \in \R$ and $\frac{d}{dx} P^{(s)}(x,1) < 0$ for $x > 0$. As remarked above, this yields the desired result for all $t > 0$.\end{proof}
  
Next, we derive the following pointwise estimate for $P^{(s)}(t,x)$. 

\begin{lemma} \label{lem:feller}
For $0 < s_0 < 1$ fixed, we have the pointwise bound 
$$
P^{(s)}(x,t) \leq C \min \left \{ t^{-\frac{1}{2s}} , |x|^{-1} \right \} 
$$ 
for $x \in \R$, $t > 0$ and $s_0 \leq s < 1$. Here the constant $C > 0$ depends only on $s_0$.
\end{lemma}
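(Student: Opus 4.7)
By the scaling identity $P^{(s)}(x,t) = t^{-\frac{1}{2s}}P^{(s)}(t^{-\frac{1}{2s}}x,1)$, both bounds reduce, after the change of variables $y = t^{-\frac{1}{2s}}x$, to the $t=1$ assertions
\[
P^{(s)}(y,1) \leq C \quad\text{and}\quad P^{(s)}(y,1) \leq \frac{C}{|y|} \qquad (y \in \R, \; s_0 \leq s < 1),
\]
with $C$ depending only on $s_0$. So my plan is to prove these two uniform bounds at $t=1$ and then invoke scaling.

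The uniform $L^\infty$ bound is immediate from the Fourier definition: $|P^{(s)}(y,1)| \leq (2\pi)^{-1/2}\int_\R e^{-|\xi|^{2s}}\,d\xi$, and one controls this last integral uniformly in $s \in [s_0,1)$ by splitting at $|\xi|=1$ and using $|\xi|^{2s} \geq |\xi|^{2s_0}$ for $|\xi| \geq 1$, yielding a bound depending only on $s_0$.

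The uniform $|y|^{-1}$ bound will come from a single integration by parts in $\xi$. Since
\[
\frac{d}{d\xi}\bigl(e^{-|\xi|^{2s}}\bigr) = -2s\,\sgn(\xi)\,|\xi|^{2s-1}e^{-|\xi|^{2s}},
\]
and since $|\xi|^{2s-1}$ is locally integrable near $\xi=0$ for every $s>0$, the boundary terms at $\pm\infty$ vanish and one obtains
\[
P^{(s)}(y,1) = \frac{1}{iy\sqrt{2\pi}} \int_\R \frac{d}{d\xi}\bigl(e^{-|\xi|^{2s}}\bigr)\, e^{-i\xi y}\,d\xi.
\]
Taking absolute values and computing the total variation,
\[
\int_\R 2s\,|\xi|^{2s-1}e^{-|\xi|^{2s}}\,d\xi = 2\int_0^\infty \frac{d}{d\xi}\bigl(-e^{-\xi^{2s}}\bigr)\,d\xi = 2,
\]
which is \emph{independent of $s$}. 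Hence $|P^{(s)}(y,1)| \leq (2\pi)^{-1/2}\cdot 2/|y|$ uniformly in $s \in (0,1)$. Combining the two estimates and undoing the scaling completes the proof. The only delicate point is the justification of the integration by parts when $s < 1/2$, where the derivative has an integrable singularity at $\xi=0$; this is handled by first performing the computation on $\{|\xi|>\eps\}$ and letting $\eps \to 0$, using dominated convergence together with the $L^1$-bound just obtained.
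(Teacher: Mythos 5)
Your argument is correct and follows essentially the same route as the paper: the first bound comes from estimating $P^{(s)}$ by $\int_{\R} e^{-t|\xi|^{2s}}\,d\xi$ (the paper computes this as $\tfrac1s\Gamma(\tfrac1{2s})t^{-\frac1{2s}}$ rather than rescaling to $t=1$), and the second from one integration by parts giving total variation $\int 2st|\xi|^{2s-1}e^{-t|\xi|^{2s}}\,d\xi = 2$, uniformly in $s$. Your extra care justifying the integration by parts near $\xi=0$ for $s<1/2$ is a detail the paper leaves implicit, but it does not change the argument.
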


\begin{remark}\label{rem:blumenthal} {\em
By a classical result in \cite{BlGe60}, we can obtain the following bound 
$$
\frac{A}{|x|^{1+2s}} \leq P^{(s)}(x,t=1) \leq \frac{B}{|x|^{1+2s} } \quad \mbox{for $|x| \geq 1$},
$$
where the constants $A > 0$ and $B > 0$ depend on $s$. However, the arguments given there do not provide any insight on how to obtain uniform decay bounds with respect to $s \geq s_0 > 1$.
}
\end{remark}

\begin{proof}
First, we easily obtain the bound
$$
P^{(s)}(x,t) \leq \int_{-\infty}^{+\infty} e^{-t |\xi|^{2s}} \,d \xi = \frac{1}{s} \Gamma \left (\frac{1}{2s} \right ) t^{-\frac{1}{2s}} \leq C t^{-\frac{1}{2s}},
$$
with some constant $C > 0$ depending only on $s_0$. Furthermore, an integration by parts yields that  $x P^{(s)}(x,t) = - i \int \left  ( \frac{d}{d \xi} e^{-t |\xi|^{2s}}  \right ) e^{-i x \xi} \, d\xi$. Hence, we find that
$$
| x P^{(s)}(x,t) | \leq \int_{-\infty}^{+\infty} 2s t |\xi|^{2s-1} e^{-t | \xi |^{2s}} \, d\xi = \int^{+\infty}_{-\infty} e^{-|u|} \, du = 2,
$$
which completes the proof. \end{proof}

Now, we consider the kernel for the resolvent $((-\DD)^s + \lambda)^{-1}$ on $\R$ with $\lambda > 0$. By functional calculus, we have the general formula
\begin{equation} \label{eq:resolvent_laplace}
\frac{1}{(-\DD)^s + \lambda} = \int_0^\infty e^{-\lambda t} e^{-t (-\DD)^s} \, dt .
\end{equation}
We have the following properties of the integral kernel associated to $((-\DD)^s + \lambda)^{-1}$.

\begin{lemma} \label{lem:resolvent_p}
Let $G_{s,\lambda} \in \mathcal{S}'(\R)$ denote the (distributional) Fourier transform of $(|\xi|^{2s} +\lambda)^{-1}$ with $0 < s < 1$ and $\lambda > 0$. Then the following properties hold.
\begin{enumerate}
\item[(i)] $G_{s,\lambda} \in L^p(\R)$ for $1 < p <  \infty$ with $1 - \frac{1}{p} < 2s$. 
\item[(ii)] $G_{s,\lambda}(x) > 0$ for $x \in \R$ and $G_{s,\lambda}(x)$ is an even function and strictly decreasing in $|x|$.
\item[(iii)] For $0 < s_0 < 1$ fixed,  we have
$$
\| G_{s,\lambda} \|_p \leq C \left ( \frac{p}{p-1} \right )^{\frac{1}{p}} \lambda^{\frac{1}{2s} (1-\frac{1}{p}) -1} \Gamma \left (  1 -\frac{1}{2s} (1-\frac{1}{p} ) \right ) .
$$
for $s_0 \leq s < 1$ and $1 < p < \infty$ with $1-\frac{1}{p} < 2s$. Here the constant $C > 0$ only depends on $s_0$. 
\item[(iv)] For $0 < s_0 < 1$ fixed, we have the pointwise bound
$$
0 < G_{s,\lambda}(x) \leq \frac{C}{\lambda |x|} \, ,
$$
for $|x| > 0$ and $s_0 \leq s < 1$, where the constant $C > 0$ only depends on $s_0$.
\end{enumerate}
\end{lemma}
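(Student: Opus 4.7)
\medskip
\noindent\textbf{Proof proposal.} My plan is to reduce all four statements to properties of the heat kernel $P^{(s)}$ via the subordination identity
\begin{equation} \label{eq:Glaplace}
G_{s,\lambda}(x) = \int_0^\infty e^{-\lambda t} P^{(s)}(x,t)\,dt,
\end{equation}
which follows from the functional-calculus formula \eqref{eq:resolvent_laplace} once one checks (by dominated convergence using Lemma~\ref{lem:feller}) that the right-hand side is a tempered distribution whose Fourier transform coincides with $(|\xi|^{2s}+\lambda)^{-1}$. Once \eqref{eq:Glaplace} is in hand, all structural properties of $P^{(s)}(\cdot,t)$ transfer to $G_{s,\lambda}$ by integration in $t$.

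First I would dispose of (ii) and (iv). Positivity, evenness in $x$ and strict monotonicity of $|x|\mapsto G_{s,\lambda}(|x|)$ follow at once from \eqref{eq:Glaplace} and Lemma~\ref{lem:Pt_unimodal} applied under the integral sign (the strict inequality is preserved because $e^{-\lambda t}>0$). For (iv) I use the second half of Lemma~\ref{lem:feller}, $P^{(s)}(x,t)\leq C|x|^{-1}$, and integrate in $t$:
\[
G_{s,\lambda}(x) \leq C|x|^{-1}\int_0^\infty e^{-\lambda t}\,dt = \frac{C}{\lambda|x|}.
\]

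The main content is (iii), which subsumes (i). The first step is a pointwise-in-$t$ bound on $\|P^{(s)}(\cdot,t)\|_p$ obtained by combining the two estimates of Lemma~\ref{lem:feller}. Splitting $\R$ at $|x|=t^{1/(2s)}$ and using $P^{(s)}(x,t)\leq C\min\{t^{-1/(2s)},|x|^{-1}\}$, one gets
\[
\int_{|x|\leq t^{1/(2s)}}|P^{(s)}|^p\,dx \leq 2C^p\,t^{-(p-1)/(2s)}, \qquad \int_{|x|>t^{1/(2s)}}|P^{(s)}|^p\,dx \leq \frac{2C^p}{p-1}\,t^{-(p-1)/(2s)},
\]
so $\|P^{(s)}(\cdot,t)\|_p \leq C'(p/(p-1))^{1/p}\,t^{-(1-1/p)/(2s)}$, where $C'$ depends only on $s_0$. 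Plugging this into Minkowski's integral inequality applied to \eqref{eq:Glaplace} yields
\[
\|G_{s,\lambda}\|_p \leq C'\Big(\tfrac{p}{p-1}\Big)^{1/p}\int_0^\infty e^{-\lambda t}\, t^{-(1-1/p)/(2s)}\,dt,
\]
and the substitution $u=\lambda t$ turns the remaining integral into $\lambda^{(1/(2s))(1-1/p)-1}\,\Gamma\!\big(1-\tfrac{1}{2s}(1-\tfrac1p)\big)$. This is exactly the bound in (iii); the Gamma factor is finite precisely when $1-1/p<2s$, which simultaneously gives (i). The condition $s\geq s_0$ is used only to make $C'$ independent of $s$ via Lemma~\ref{lem:feller}.

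The main obstacle I foresee is the justification of \eqref{eq:Glaplace} as an honest identity of functions (not just distributions) on $\R\setminus\{0\}$: one must show the integral on the right is absolutely convergent for $x\neq 0$ and that it agrees with the distributional inverse Fourier transform. The pointwise bound from Lemma~\ref{lem:feller} together with integrability of $e^{-\lambda t}$ at $\infty$ handles large $t$, while the bound $P^{(s)}(x,t)\leq C|x|^{-1}$ for small $t$ (with $x$ fixed away from $0$) handles small $t$; applying Fubini against a Schwartz test function then identifies the Fourier transform. With that settled, (ii)--(iv) follow as described above, and (i) is a corollary of (iii).
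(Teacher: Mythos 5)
Your proposal is correct and follows essentially the same route as the paper: the Laplace-transform representation \eqref{eq:resolvent_laplace} of the resolvent, transfer of positivity/monotonicity from Lemma~\ref{lem:Pt_unimodal} for (ii), the split at $|x|=t^{1/(2s)}$ with Lemma~\ref{lem:feller} plus Minkowski and the substitution $u=\lambda t$ for (iii) (hence (i)), and direct integration of the $|x|^{-1}$ bound for (iv). The only difference is that you spell out the justification of the pointwise subordination identity, which the paper takes for granted.
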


\begin{proof}
As for property (i), this will clearly follow once we have deduced that (iii) holds. To see that (ii) holds, we simply recall formula \eqref{eq:resolvent_laplace} and use the corresponding properties of $P^{(s)}(t,x)$ in Lemma \ref{lem:Pt_unimodal}. To prove (iii), we note that \eqref{eq:resolvent_laplace} yields
$$
\| G_{s,\lambda} \|_p  \leq \int_0^\infty e^{-\lambda t} \| P^{(s)}(\cdot, t) \|_p \, dt .
$$
From Lemma \ref{lem:feller} we conclude that
$$
\| P^{(s)}(\cdot,t) \|_p \leq C \left ( \int_{|x| < t^{\frac{1}{2s}}} t^{-\frac{p}{2s}} \, dx + \int_{|x| \geq t^{\frac{1}{2s}}} |x|^{-p} \, dx \right )^{\frac{1}{p}} \leq C \left ( \frac{p}{p-1} \right )^{\frac{1}{p}} t^{-\frac{1}{2s} (1 - \frac{1}{p} ) },
$$
with $1 < p < \infty$ and where $C > 0$ only depends on $s_0$. A straightforward combination of these bounds yields the desired estimate, provided that $1-\frac{1}{p} < 2s$ holds.

To establish the pointwise bound stated in (iv), we simply use \eqref{eq:resolvent_laplace} in combination with Lemma \ref{lem:feller}.  \end{proof}

We conclude this section by deriving a uniform bound for the optimal constants $C_{\alpha,s}> 0$ for the Gagliardo-Nirenberg inequality (in $d=1$ dimensions)
\begin{equation*}
\int | f |^{\alpha+2} \leq C_{\alpha,s} \left (  \int |(-\Delta)^{\frac s 2} f |^2 \right )^{\frac{\alpha}{4s}} \cdot \left ( \int | f |^2 \right )^{\frac{\alpha}{4s} (2s - 1) + 1},
\end{equation*}
where $0 <  s < 1$ and $0 < \alpha < \amax(s)$. We have the following uniform bound.

\begin{lemma} \label{lem:GNuniform}
Let $0<\alpha<\infty$ be given. Then there is a constant $K_\alpha > 0$ such that $C_{\alpha,s}\leq K_\alpha$ for $\frac{\alpha}{2(\alpha+2)} \leq s < 1$. 
\end{lemma}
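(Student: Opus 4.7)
The plan is to reduce the uniform bound to the critical Sobolev embedding at the endpoint $s=s_0 := \alpha/(2(\alpha+2))$ and then upgrade it to all $s \in [s_0,1)$ by interpolation in Fourier. The crucial observation is that $s_0$ is chosen precisely so that $1/(\alpha+2) = 1/2 - s_0$; hence $\dot{H}^{s_0}(\R) \hookrightarrow L^{\alpha+2}(\R)$ is the critical Sobolev embedding in one space dimension. By the classical Hardy--Littlewood--Sobolev inequality there is therefore a finite constant $K_\alpha > 0$ depending only on $\alpha$ such that
$$
\|f\|_{\alpha+2}^{\alpha+2} \leq K_\alpha \, \|(-\DD)^{s_0/2} f\|_2^{\alpha+2} \qquad \text{for all } f \in \dot{H}^{s_0}(\R).
$$
Note that this is exactly \eqref{ineq:GN} in the special case $s = s_0$: the $\|f\|_2^2$-exponent $\alpha(2s_0-1)/(4s_0)+1$ vanishes by the very choice of $s_0$, and so the bound $C_{\alpha,s_0} \leq K_\alpha$ is immediate.

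For $s > s_0$ the idea is to interpolate the intermediate seminorm $\|(-\DD)^{s_0/2} f\|_2$ between $\|(-\DD)^{s/2} f\|_2$ and $\|f\|_2$ in Fourier space. Writing
$$
|\xi|^{2s_0}|\hat f(\xi)|^2 = \bigl(|\xi|^{2s}|\hat f(\xi)|^2\bigr)^{s_0/s} \,\bigl(|\hat f(\xi)|^2\bigr)^{1-s_0/s}
$$
and applying H\"older's inequality with conjugate exponents $s/s_0$ and $s/(s-s_0)$, followed by Plancherel, I obtain
$$
\|(-\DD)^{s_0/2} f\|_2^2 \leq \|(-\DD)^{s/2} f\|_2^{2 s_0/s}\, \|f\|_2^{2(1-s_0/s)}.
$$
Raising this to the power $(\alpha+2)/2$ and plugging into the critical Sobolev bound above yields
$$
\|f\|_{\alpha+2}^{\alpha+2} \leq K_\alpha\, \|(-\DD)^{s/2} f\|_2^{(\alpha+2) s_0/s}\, \|f\|_2^{(\alpha+2)(1-s_0/s)}.
$$
A short arithmetic check based on the identity $2 s_0(\alpha+2) = \alpha$ gives
$$
(\alpha+2)\,\frac{s_0}{s} = \frac{\alpha}{2s}, \qquad (\alpha+2)\left(1-\frac{s_0}{s}\right) = \frac{\alpha(2s-1)}{2s} + 2,
$$
which are exactly the exponents appearing on the right-hand side of \eqref{ineq:GN} once the squared $L^2$-norms there are rewritten in terms of $\|(-\DD)^{s/2} f\|_2$ and $\|f\|_2$. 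Therefore $C_{\alpha,s} \leq K_\alpha$ uniformly for all $s \in [s_0,1)$.

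The only nontrivial input is the critical Sobolev embedding at the single value $s_0$, which is classical and ensures $K_\alpha < \infty$; beyond this, the argument is just H\"older in Fourier space together with elementary exponent bookkeeping, so I do not anticipate any real obstacle. The bound is in fact sharp at $s=s_0$ and essentially free of any loss as $s \to 1$, where interpolation becomes trivial.
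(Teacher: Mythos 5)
Your proof is correct and follows essentially the same route as the paper: the endpoint Sobolev embedding $\dot H^{s_0}(\R)\hookrightarrow L^{\alpha+2}(\R)$ at $s_0=\frac{\alpha}{2(\alpha+2)}$ combined with the interpolation bound $\|(-\DD)^{s_0/2}f\|_2\leq \|(-\DD)^{s/2}f\|_2^{s_0/s}\,\|f\|_2^{1-s_0/s}$, with the exponent bookkeeping checking out exactly as you state. The only cosmetic difference is that you prove the interpolation step by H\"older's inequality in Fourier space, whereas the paper obtains the same inequality from an abstract operator inequality for $H=(-\DD)^{s_0}$ optimized over a parameter $\rho$; these are two standard proofs of the identical estimate.
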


\begin{proof}
Let $s_0=\frac{\alpha}{2(\alpha+2)}$ and note that $0 < s_0 <1/2$. By Sobolev inequalities, we have $\| f \|_{\alpha+2} \leq \widetilde{K} \| (-\DD)^{\frac{s_0}{2}} f \|_2$ for some constant $\widetilde{K}> 0$ depending only on $\alpha$. Next, we use that $\rho H \leq \theta H^{1/\theta} + (1-\theta) \rho^{1/(1-\theta)}$ for any nonnegative operator $H \geq 0$ and any real numbers $\rho>0$ and $0<\theta<1$. Evaluating this operator inequality on a function $f$ and optimizing with respect to $\rho$, we find that
$$
\langle f,Hf \rangle \leq \langle f,H^{1/\theta}fÊ\rangle^{\theta} \|f\|_2^{2(1-\theta)} \,.
$$
Given $s>s_0$, we apply this to $H=(-\Delta)^{s_0}$ with $\theta=s_0/s$. This  yields
$$
\| f \|_{\alpha+2} \leq \widetilde{K} \| (-\DD)^{\frac{s_0}{2}} f \|_2 \leq \widetilde{K} \| (-\DD)^{\frac{s}{2}} f \|_2^{s_0/s} \| f \|_2^{1-s_0/s} \,,
$$
whence the result follows with $K_\alpha = \widetilde{K}^{\alpha+2}$. \end{proof}

\section{Regularity, Symmetry and Monotonicity}

\label{app:regularity}

Let $0 < s < 1$ and $0 < \alpha < \amax(s)$ be fixed throughout this section. We consider (not necessarily real-valued) solutions $Q=Q(x)$ in the distributional sense of the equation
\begin{equation} \label{eq:Qappendix}
(-\DD)^s Q + \lambda Q - |Q|^{\alpha} Q = 0 ,
\end{equation}
where $\lambda > 0$ is given. Again, we could assume that $\lambda=1$ by a rescaling argument, but we keep $\lambda >0$ explicit in the following.

We start with a simple regularity result used in Section \ref{sec:unique}.

\begin{lemma}
If $Q \in L^2(\R) \cap L^{\alpha+2}(\R)$ solves \eqref{eq:Qappendix}, then $Q \in H^s(\R)$. 
\end{lemma}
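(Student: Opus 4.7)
The plan is to bootstrap $Q$ up to $L^\infty(\R)$ and then upgrade in a single step to $H^s(\R)$. First, since the Fourier multiplier $(|\xi|^{2s}+\lambda)^{-1}$ is smooth and bounded, $(-\DD)^s + \lambda$ is invertible on tempered distributions, and I would rewrite \eqref{eq:Qappendix} as
\[
Q = G_{s,\lambda} \ast (|Q|^\alpha Q),
\]
where $G_{s,\lambda}$ is the resolvent kernel studied in Lemma \ref{lem:resolvent_p}. Since $Q \in L^{\alpha+2}(\R)$, the right-hand side is a genuine convolution with $|Q|^\alpha Q \in L^{(\alpha+2)/(\alpha+1)}(\R)$.

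Next I would iterate Young's inequality. By Lemma \ref{lem:resolvent_p}(i), $G_{s,\lambda} \in L^r(\R)$ for every $1 < r < \infty$ with $1 - 1/r < 2s$; so $r$ may be taken arbitrarily close to $1/(1-2s)$ when $s < 1/2$ and arbitrarily large when $s \geq 1/2$. Given $Q \in L^{q_n}(\R)$, Young's inequality yields $Q \in L^{q_{n+1}}(\R)$ with
\[
\frac{1}{q_{n+1}} = \frac{1}{r} + \frac{\alpha+1}{q_n} - 1,
\]
as long as the right-hand side is nonnegative; if it can be made $\le 0$ by the choice of $r$, the same inequality already delivers $Q \in L^\infty(\R)$. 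Starting from $q_0 = \alpha + 2$ and letting $1/r \downarrow \max(1-2s,0)$, this is (asymptotically) the affine recursion $x_{n+1} = (\alpha+1) x_n - 2s$ on $x_n := 1/q_n$. Its unique fixed point $x^* = 2s/\alpha$ is repelling (slope $\alpha+1 > 1$), and the initial value $x_0 = 1/(\alpha+2)$ lies strictly below $x^*$ precisely when $\alpha < 4s/(1-2s) = \amax(s)$, which is our standing hypothesis. Hence $x_n$ decreases strictly and becomes nonpositive in finitely many steps, yielding $Q \in L^\infty(\R)$.

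Finally, since $Q \in L^2(\R) \cap L^\infty(\R)$, interpolation gives $Q \in L^{2(\alpha+1)}(\R)$, and therefore $|Q|^\alpha Q \in L^2(\R)$. The multiplier $|\xi|^s/(|\xi|^{2s}+\lambda)$ is uniformly bounded on $\R$, so $(-\DD)^{s/2}((-\DD)^s+\lambda)^{-1}$ is bounded on $L^2(\R)$; applied to the identity $Q = ((-\DD)^s+\lambda)^{-1}(|Q|^\alpha Q)$, it shows $(-\DD)^{s/2} Q \in L^2(\R)$, which together with $Q \in L^2(\R)$ gives $Q \in H^s(\R)$. The main obstacle is arranging the bootstrap to close in finitely many steps, and this is exactly the role played by the subcritical assumption $\alpha < \amax(s)$.
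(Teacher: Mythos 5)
Your proposal is correct, but it takes a genuinely different (and longer) route than the paper's. The paper proves the lemma in one step: from $Q = ((-\DD)^s+\lambda)^{-1}|Q|^\alpha Q$ and the multiplier bound $|\xi|^{s}/(|\xi|^{2s}+\lambda) \lesssim_\lambda (|\xi|^{s}+1)^{-1}$ it estimates $\| (-\DD)^{s/2} Q\|_2 \lesssim_\lambda \| ((-\DD)^{s/2}+1)^{-1} |Q|^\alpha Q \|_2 \lesssim_s \|Q\|_{\alpha+2}^{\alpha+1}$, using Lemma \ref{lem:resolvent_p}\,(iii) with fractional power $s/2$ and Young's inequality; the subcriticality $\alpha<\amax(s)$ enters exactly once, through the exponent condition $\tfrac{\alpha}{\alpha+2}<2s$. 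You instead bootstrap $Q$ through a chain of $L^{q_n}$ spaces up to $L^\infty$ and only then apply the bounded multiplier $|\xi|^{s}/(|\xi|^{2s}+\lambda)$; your fixed-point analysis of the recursion is the right bookkeeping, and subcriticality plays the same role (the starting exponent lies strictly below the repelling fixed point). What your route buys is the stronger intermediate conclusion $Q\in L^\infty(\R)$, which the paper in fact needs anyway and proves separately at the start of Lemma \ref{lem:Qregular} by essentially your iteration; what the paper's route buys is brevity and a direct quantitative bound of $\|(-\DD)^{s/2}Q\|_2$ by $\|Q\|_{\alpha+2}^{\alpha+1}$ alone. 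Two points you should state more carefully, though neither is a real gap: first, since the admissible range $1-\tfrac1r<2s$ is open, fix $\tfrac1r=\max(1-2s,0)+\eps$ with $\eps$ small; because only finitely many steps are needed and the inequality $x_0<x^*$ is strict, the perturbed recursion still exits the positive axis, so ``asymptotically'' should be replaced by this explicit choice. Second, $(|\xi|^{2s}+\lambda)^{-1}$ is bounded but not smooth at $\xi=0$, so it does not act on arbitrary tempered distributions; the identity $Q=G_{s,\lambda}\ast(|Q|^\alpha Q)$ is better justified on the Fourier side as an almost-everywhere identity of locally integrable functions, using $\widehat Q\in L^2(\R)$ and Hausdorff--Young for $|Q|^\alpha Q\in L^{(\alpha+2)/(\alpha+1)}(\R)$ --- a point the paper's own proof also leaves implicit.
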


\begin{remcom} {\em
Formally, this regularity result follows from integrating \eqref{eq:Qappendix} against $Q$. However, this argument is not legitimate, since we only assume that $(-\DD)^s Q \in H^{-2s}(\R)$ a-priori.}
\end{remcom}

\begin{proof}
Using that $Q = ((-\DD)^s + \lambda)^{-1} |Q|^\alpha Q$ for $Q \in L^2(\R) \cap L^{\alpha+2}(\R)$,  we deduce that
$$
\| (-\DD)^\frac{s}{2} Q \|_2 = \left \| \frac{(-\DD)^{\frac{s}{2}}}{(-\DD)^s + \lambda} |Q|^\alpha Q \right \|_2 \lesssim_{\lambda} \left \| \frac{1}{(-\DD)^{ \frac{s}{2} } + 1} |Q|^\alpha Q \right \|_2 .
$$
Now, we invoke Lemma \ref{lem:resolvent_p}, part (iii), with $s_0 = s/2$ and use Young's inequality. Indeed, since $1+\frac{1}{2} = \frac{1}{p}  + \frac{\alpha+1}{\alpha+2}$ implies that $1-\frac{1}{p} = \frac{\alpha}{\alpha+2} < 2s$ since $\alpha < \amax(s)$, we deduce
$$
\left \| \frac{1}{(-\DD)^{\frac{s}{2}} + 1} |Q|^\alpha Q \right \|_2 \lesssim_{s} \| |Q|^\alpha Q \|_{\frac{\alpha+2}{\alpha+1}} = \| Q \|_{\alpha+2}^{\alpha+1}.
$$
Therefore $Q \in L^2(\R)$ satisfies $\| (-\DD)^\frac{s}{2} Q \|_2 < \infty$ and hence $Q \in H^s(\R)$. \end{proof}

\noindent
Next, we proceed with the following improved regularity result.

\begin{lemma}  \label{lem:Qregular}
If $Q \in H^s(\R)$ solves \eqref{eq:Qappendix}, then $Q \in H^{2s+1}(\R)$.
\end{lemma}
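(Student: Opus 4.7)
The proof will be a bootstrap argument built on the integral equation
$$
Q = G_{s,\lambda} \ast \bigl( |Q|^\alpha Q \bigr),
$$
where $G_{s,\lambda}$ is the resolvent kernel analyzed in Lemma~\ref{lem:resolvent_p}. First I would establish $Q \in L^\infty(\R)$. By one-dimensional Sobolev embedding, $Q \in H^s(\R)$ already lies in $L^p(\R)$ for $p$ in some interval starting at $2$. Combining Young's convolution inequality with the $L^r$-bounds on $G_{s,\lambda}$ from Lemma~\ref{lem:resolvent_p}(iii), one improves $Q \in L^p \Rightarrow Q \in L^{p_1}$ with $p_1 > p$, the strict gain being precisely what the subcritical condition $\alpha < \amax(s)$ secures. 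Finitely many iterations then yield $Q \in L^\infty(\R)$.

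Next I would iterate in Sobolev regularity. Since $Q \in L^2 \cap L^\infty$, the right-hand side of $(-\DD)^s Q = |Q|^\alpha Q - \lambda Q$ lies in $L^2(\R)$, so $Q \in H^{2s}(\R)$. For $0 \le \sigma < 1$, the Gagliardo seminorm characterization of $H^\sigma$ combined with the elementary pointwise Hölder-type bound
$$
\bigl| |a|^\alpha a - |b|^\alpha b \bigr| \lesssim \bigl( |a|^\alpha + |b|^\alpha \bigr) |a-b|
$$
gives the tame estimate $\bigl\| |Q|^\alpha Q \bigr\|_{H^\sigma} \lesssim \|Q\|_{L^\infty}^{\alpha} \|Q\|_{H^\sigma}$. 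Inserting this into the equation yields the implication $Q \in H^\sigma \Rightarrow Q \in H^{\sigma + 2s}$ for $\sigma \in [0,1)$. After finitely many iterations one reaches some $\sigma_* \in [1, 2)$ with $Q \in H^{\sigma_*}(\R)$; in particular $Q \in H^1(\R)$ and therefore $Q' \in L^2(\R)$.

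For the final derivative I would invoke the ordinary chain rule: since $\phi(t) := |t|^\alpha t$ is $C^1$ on $\R$ with $\phi'(t) = (\alpha+1)|t|^\alpha$, the composition formula $(|Q|^\alpha Q)' = (\alpha+1)|Q|^\alpha Q'$ holds in the distributional sense for $Q \in W^{1,2} \cap L^\infty$. The right-hand side is in $L^2(\R)$ because $|Q|^\alpha \in L^\infty$ and $Q' \in L^2$, whence $|Q|^\alpha Q \in H^1(\R)$. Reading the equation once more, $(-\DD)^s Q = |Q|^\alpha Q - \lambda Q \in H^1(\R)$, which is exactly the statement $Q \in H^{2s+1}(\R)$.

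The main obstacle is the tame estimate on the nonlinearity in Stage~2 for \emph{non-integer} $\alpha$: standard fractional chain rules or Kato--Ponce-type commutator bounds tacitly assume higher smoothness of the nonlinearity. The workaround above circumvents this by exploiting that we only need the estimate in the Gagliardo range $\sigma \in (0,1)$, where the pointwise Hölder bound suffices, and by deferring the $H^1$-level chain rule to the end, where $\phi \in C^1$ makes it elementary. A secondary technical point is the verification that the Lebesgue bootstrap in Stage~1 actually closes; this reduces to checking that the combination of Sobolev embedding for $H^s$ and Young's inequality with the estimate in Lemma~\ref{lem:resolvent_p}(iii) is compatible with $\alpha < \amax(s)$, which is precisely the role played by this critical exponent.
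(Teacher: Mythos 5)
Your proposal is correct and follows essentially the same route as the paper's proof: first an $L^\infty$ bound obtained by iterating $Q=((-\DD)^s+\lambda)^{-1}(|Q|^\alpha Q)$ with the resolvent-kernel $L^p$ bounds, then the Gagliardo-seminorm tame estimate $\|(-\DD)^{\sigma/2}(|Q|^\alpha Q)\|_2\lesssim \|Q\|_\infty^\alpha\|(-\DD)^{\sigma/2}Q\|_2$ (valid for $\sigma<1$) bootstrapped up to $H^1$, and finally the pointwise chain-rule bound $|\nabla(|Q|^\alpha Q)|\leq(\alpha+1)|Q|^\alpha|\nabla Q|$ to reach $H^{2s+1}$. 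The only differences are cosmetic (the paper splits into the cases $s\geq 1/2$ and $s<1/2$ and gains $s$ rather than $2s$ of regularity per iteration), so no changes are needed.
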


\begin{remark} {\em
If $\alpha=1,2,\ldots$ is an integer in equation \eqref{eq:Qappendix}, it is easy to see that $Q \in H^k(\R)$ for all $k \geq 1$. See also \cite{LiBo96} for an analyticity result of $Q(x)$ in this case.}
\end{remark}

\begin{proof}
 First, we remark that $Q \in L^\infty(\R)$ holds. Of course, this fact immediately follows if $s > 1/2$ due to Sobolev inequalities. To see that $Q \in L^\infty(\R)$ also when $0 < s \leq 1/2$, we can use the $L^p$-bounds for the resolvent $((-\DD)^s + \lambda)^{-1}$ derived in Lemma \ref{lem:resolvent_p}. Then by iterating the identity $Q = ((-\DD)^s + \lambda)^{-1} |Q|^\alpha Q$ sufficiently many times, we conclude that $\| Q \|_{\infty} < \infty$ holds. (Alternatively, we could use that $Q^\alpha \in K_s$ and use the remarks in Section \ref{sec:nodal} to infer that $Q \in L^\infty(\R)$ holds.)

Given that $Q \in L^\infty(\R)$, we can now show that $Q \in H^{2s+1}(\R)$ as follows. Since $Q \in L^2(\R)$, it remains to derive the bound $\| (-\DD)^{s+\frac{1}{2}} Q \|_2 < \infty$. We treat the cases $s \geq 1/2$ and $0 < s < 1/2$ separately as follows. 

\subsubsection*{Case: $s \geq 1/2$}  As usual, this case is straightforward to handle. Indeed, we notice that 
\begin{align*}
\| (-\DD)^{s} Q \|_2  & = \left \| \frac{(-\DD)^{s}}{(-\DD)^s + \lambda} |Q|^{\alpha} Q \right \|_2 \lesssim_{\lambda} \left \| |Q|^\alpha Q \right \|_2 \lesssim_{\lambda} \| Q \|_\infty^\alpha \|Q \|_2 < \infty .
\end{align*}
Hence we have $Q \in H^{2s}(\R)$ and in particular $Q \in H^1(\R)$, since $s \geq 1/2$ by assumption. Next, we proceed to find that
\begin{align*}
\left \| (-\DD)^{s+\frac{1}{2}} Q \right \|_2 & =  \left \| \frac{(-\DD)^{s+\frac{1}{2}}}{(-\DD)^s + \lambda} |Q|^{\alpha} Q \right \|_2  \lesssim_{\lambda,\alpha} | \nabla (|Q|^\alpha Q) \|_2 \lesssim_{\lambda} \| Q \|^\alpha_\infty \| \nabla Q \|_2 < \infty,
\end{align*}
where we used that $| \nabla ( |Q|^\alpha Q)| \leq (\alpha+1) |Q|^\alpha |\nabla Q|$ a.\,e.~in $\R$. Thus we have shown that $Q \in H^{2s+1}(\R)$, provided that $s \geq 1/2$ holds.

\subsubsection*{Case: $0 < s < 1/2$} First, we recall that the well-known identity 
$$
\big \| (-\DD)^{\frac{\sigma}{2}} u \big \|_2^2 = \frac{2^{2 \sigma-1}}{\pi^{\frac{1}{2}}}  \frac{  \Gamma((1+2\sigma)/2) }{|\Gamma(-\sigma)|} \int \! \! \int_{\R \times \R} \frac{|u(x)-u(y)|^2}{|x-y|^{1+2\sigma}} \, dx \, dy ,
$$
for any $0 < \sigma < 1$. From this we conclude that
\begin{equation} \label{ineq:fractional}
\| (-\DD)^{\frac{\sigma}{2}} (|Q|^\alpha Q) \|_2 \lesssim_{\sigma,\alpha} \| Q \|_{\infty}^\alpha \| (-\DD)^{\frac{\sigma}{2}} Q \|_2,
\end{equation}
where we use the pointwise inequality $$| |Q|^\alpha(x) Q(x) - |Q|^\alpha(y) Q(y)| \leq \alpha \max \{ |Q|^{\alpha}(x), |Q|^{\alpha}(y) \}|Q(x)-Q(y)|.$$

Recall that $0 < s < 1/2$ by assumption, and let $N  \geq 2$ be the unique integer such that $1/(N+1) \leq s < 1/N$. By using estimate \eqref{ineq:fractional} and $Q \in L^\infty(\R)$, we conclude that
\begin{align*}
\| (-\DD)^{\frac{(k+1) s}{2}} Q \|_2 & = \left \| \frac{(-\DD)^{\frac{s}{2}} (-\DD)^{\frac{k s}{2}}}{(-\DD)^s + \lambda} |Q|^\alpha Q \right \|_2   \lesssim_{k,s,\lambda,\alpha} \| (-\DD)^{\frac{k s}{2}} (|Q|^\alpha Q) \|_2 \\
& \lesssim_{k,s,\lambda,\alpha}  \| Q \|_\infty^\alpha \| (-\DD)^{\frac{k s}{2}} Q \|_2 \lesssim_{k,s,\lambda,\alpha} \| (-\DD)^{\frac{k s}{2}} Q \|_2 ,
\end{align*}
for $k=1, \ldots, N$. By iteration and since $Q \in L^2(\R)$, we thus obtain 
$$
\| Q_s \|_{H^{(N+1) s}} \lesssim_{s,k,\alpha} \| Q_s \|_{H^{s}} < \infty.
$$
Since $(N+1) s \geq 1$, we deduce that $Q \in H^1(\R)$ holds. Given this fact, we can now conclude that $Q \in H^{2s+1}(\R)$ in the same fashion as done above for $s \geq 1/2$. 

The proof of Lemma \ref{lem:Qregular} is now complete.\end{proof}

Next, we turn to symmetry and monotonicity results about solutions for \eqref{eq:Qappendix}. Indeed, by adapting the recent moving plane arguments developed by L.~Ma and L.~Zhao in \cite{MaZh10} for the nonlocal Pekar-Choquard equation, we can derive the following symmetry and monotonicity result.

\begin{lemma}  \label{lem:symm}
If $Q \in H^s(\R)$ with $Q \geq 0$ and $Q \not \equiv 0$ solves \eqref{eq:Qappendix}, then we have
$$
Q(x)= \widetilde{Q}(|x-x_0|)
$$
with some $x_0 \in \R$ and the function $\widetilde{Q}(r)$ satisfies $\widetilde{Q}(r) > 0$ and $\widetilde{Q}'(r) < 0$ for $r > 0$.
\end{lemma}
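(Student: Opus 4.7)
The strategy is to run the moving plane method in the integral-equation formulation used by Ma--Zhao, taking advantage of the explicit positivity and strict monotonicity of the resolvent kernel $G_{s,\lambda}$ established in Lemma~\ref{lem:resolvent_p}. First I would rewrite the equation in integral form. Since $Q\geq 0$, we have $|Q|^{\alpha}Q=Q^{\alpha+1}\geq 0$, and from the regularity in Lemma~\ref{lem:Qregular} the function $Q$ lies in $H^{2s+1}(\R)\subset C^0(\R)$ with $Q(x)\to 0$ as $|x|\to\infty$ (the decay uses $Q\in H^s(\R)\cap L^\infty(\R)$ together with the equation $Q=((-\DD)^s+\lambda)^{-1}Q^{\alpha+1}$ and the $L^p$-bounds on $G_{s,\lambda}$ from Lemma~\ref{lem:resolvent_p}(iii)). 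Thus
\[
Q(x)=\int_{\R} G_{s,\lambda}(x-y)\,Q(y)^{\alpha+1}\,dy,
\]
where $G_{s,\lambda}$ is even, strictly positive, and strictly decreasing in $|x|$.

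For $\mu\in\R$ I set $T_\mu(x)=2\mu-x$, $\Sigma_\mu=(-\infty,\mu)$, $Q_\mu(x):=Q(T_\mu x)$ and $w_\mu(x):=Q_\mu(x)-Q(x)$. Using $Q_\mu(x)=\int G_{s,\lambda}(T_\mu x-y)Q(y)^{\alpha+1}dy$ and changing variables $y\mapsto T_\mu y$, a short symmetrization yields the basic identity
\[
w_\mu(x)=\int_{\Sigma_\mu}\bigl[G_{s,\lambda}(x-y)-G_{s,\lambda}(x-T_\mu y)\bigr]\bigl[Q_\mu(y)^{\alpha+1}-Q(y)^{\alpha+1}\bigr]\,dy,\qquad x\in\Sigma_\mu.
\]
For $x,y\in\Sigma_\mu$ one has $|x-y|<|x-T_\mu y|$, so the kernel difference is \emph{strictly positive} by Lemma~\ref{lem:resolvent_p}(ii). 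Let $\Sigma_\mu^-:=\{y\in\Sigma_\mu:w_\mu(y)<0\}$. Restricting to $x\in\Sigma_\mu^-$, throwing away the nonnegative contribution from $\Sigma_\mu\setminus\Sigma_\mu^-$, bounding the kernel difference by $G_{s,\lambda}(x-y)$, and using the mean-value estimate $Q^{\alpha+1}-Q_\mu^{\alpha+1}\leq(\alpha+1)Q^{\alpha}|w_\mu|$ on $\Sigma_\mu^-$, I arrive at
\[
|w_\mu(x)|\ \leq\ (\alpha+1)\int_{\Sigma_\mu^-} G_{s,\lambda}(x-y)\,Q(y)^{\alpha}\,|w_\mu(y)|\,dy,\qquad x\in\Sigma_\mu^-.
\]

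\textbf{Starting the plane.} Take the $L^p(\Sigma_\mu^-)$ norm of both sides and apply Young's inequality together with H\"older: picking $p=\alpha+2$, $r$ with $G_{s,\lambda}\in L^r(\R)$ (available by Lemma~\ref{lem:resolvent_p}(iii) since the admissible range satisfies $1-1/r<2s$ after using $\alpha<\amax(s)$), and $t$ with $Q^\alpha\in L^t(\R)$ by the Sobolev embedding $H^s\hookrightarrow L^{\alpha+2}$, I obtain
\[
\|w_\mu\|_{L^p(\Sigma_\mu^-)}\ \leq\ C\,\|Q\|_{L^{\alpha t}(\Sigma_\mu)}^{\alpha}\,\|w_\mu\|_{L^p(\Sigma_\mu^-)}.
\]
Because $Q\in L^{\alpha t}(\R)$ and $\Sigma_\mu\to\emptyset$ as $\mu\to-\infty$, the prefactor is $<1$ for $\mu$ sufficiently negative; hence $w_\mu\equiv 0$ on $\Sigma_\mu^-$, i.e.\ $\Sigma_\mu^-=\emptyset$ and $w_\mu\geq 0$ on all of $\Sigma_\mu$.

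\textbf{Moving and stopping.} Define $\mu_0:=\sup\{\mu\in\R:w_\nu\geq 0\text{ on }\Sigma_\nu\ \forall\,\nu\leq\mu\}$. By continuity of $\mu\mapsto w_\mu$ and the previous step, the set is nonempty; by decay at $+\infty$ it is bounded above. I claim that at $\mu_0$ either $w_{\mu_0}\equiv 0$ on $\Sigma_{\mu_0}$, in which case $Q$ is symmetric about $\mu_0$, or $w_{\mu_0}>0$ on $\Sigma_{\mu_0}$; in the latter case the integral identity together with uniform continuity and decay of $Q$ allows us to repeat the smallness argument on the slightly enlarged strip $\Sigma_{\mu_0+\epsilon}\setminus\Sigma_{\mu_0-\delta}$, contradicting maximality of $\mu_0$. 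This is the classical strong--maximum--principle dichotomy, and in our nonlocal setting it follows directly from the strict positivity of the kernel difference in the integral identity: if $w_{\mu_0}\geq 0$ and $w_{\mu_0}\not\equiv 0$, the identity forces $w_{\mu_0}>0$ pointwise on $\Sigma_{\mu_0}$. Running the same argument from the right and matching the two critical values yields a single point $x_0\in\R$ about which $Q$ is symmetric.

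\textbf{Strict monotonicity and positivity.} Positivity $Q(x)>0$ on $\R$ is immediate from $Q=G_{s,\lambda}*Q^{\alpha+1}$ with $G_{s,\lambda}>0$ and $Q^{\alpha+1}\not\equiv 0$. For strict monotonicity, fix any $\mu>x_0$; then $w_\mu\geq 0$ on $\Sigma_\mu$ and $w_\mu\not\equiv 0$ (otherwise $Q$ would have a second axis of symmetry, contradicting decay at infinity), so by the strong--maximum--principle step above $w_\mu>0$ on $\Sigma_\mu$, i.e.\ $Q(2\mu-x)>Q(x)$ for all $x<\mu$. Choosing $x=2x_0-y$ with $y>0$ converts this into $Q(x_0+(2\mu-2x_0+y))>Q(x_0-y)=Q(x_0+y)$, which, as $\mu$ ranges over $(x_0,\infty)$, gives strict decrease of $r\mapsto\widetilde Q(r):=Q(x_0+r)$ on $(0,\infty)$.

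\textbf{Main obstacle.} The delicate step is the initial moving-plane start: one must show that the ``$L^p$ smallness'' argument closes \emph{uniformly} for $s\in(0,1)$, which requires choosing the Young/H\"older exponents so that both $G_{s,\lambda}\in L^r$ (via Lemma~\ref{lem:resolvent_p}(iii), using the subcriticality $\alpha<\amax(s)$ crucially) and $Q^\alpha\in L^t$ (via Sobolev embedding, again using subcriticality). Once these exponents are fixed the rest of the argument is a routine adaptation of~\cite{MaZh10}.
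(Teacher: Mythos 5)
Your route is sound for the symmetry part and is genuinely more self-contained than the paper's: the paper does not redo the moving-plane argument at all, but simply invokes \cite[Section 5]{MaZh10} after checking, via Lemma \ref{lem:resolvent_p}, that the resolvent kernel is even, strictly positive and radially decreasing; this yields symmetry and only \emph{non-strict} monotonicity. The strict statement is then obtained in the paper by a completely different, spectral argument: differentiating \eqref{eq:Qappendix} gives $L_+Q'=0$, $Q'$ is odd with $Q'\leq 0$ on $(0,\infty)$, and the Perron--Frobenius property of $L_+$ restricted to $L^2_{\mathrm{odd}}(\R)$ (Lemma \ref{lem:perron2}) forces $Q'$ to be the odd ground state and hence to have a strict sign. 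Your version instead writes out the Ma--Zhao scheme explicitly (the reflection identity is correct, and your exponent choice works: with $t=(\alpha+2)/\alpha$ one has $1-1/r=\alpha/(\alpha+2)<2s$ precisely because $\alpha<\amax(s)$, and $\|Q\|_{L^{\alpha+2}(\Sigma_\mu)}\to 0$ as $\mu\to-\infty$), and it extracts strictness from the strict positivity of the kernel difference inside the same identity. That is a legitimate alternative, at the price of having to carry out the stopping/continuation step which the paper outsources to \cite{MaZh10}.

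Two points need repair in your last step. First, a direction slip: for $\mu>x_0$ the claim $w_\mu\geq 0$ on $\Sigma_\mu$ is false (for $x\in(x_0,\mu)$ both $x$ and $2\mu-x$ lie to the right of $x_0$, so monotonicity gives $w_\mu(x)\leq 0$), and the inequality you then derive would make $Q$ larger farther from $x_0$, contradicting decay. The correct use of your own machinery is with $\mu<x_0$: there $w_\mu\geq 0$ on $\Sigma_\mu$, and $w_\mu\not\equiv 0$ (two symmetry axes would make $Q$ periodic, impossible for a nontrivial $L^2$ function), so your strong-maximum-principle step gives $w_\mu>0$; choosing $\mu=(x_1+x_2)/2$ for $x_1<x_2<x_0$ yields $Q(x_1)<Q(x_2)$, i.e.\ strict monotonicity of $\widetilde Q$. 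Second, even after this fix your argument proves that $\widetilde Q$ is strictly decreasing, which is weaker than the stated conclusion $\widetilde Q'(r)<0$ for every $r>0$ (a strictly decreasing function can have a vanishing derivative at isolated points); the moving-plane comparison alone does not control the derivative without a Hopf-type lemma for the nonlocal problem. The cheapest way to close this is exactly the paper's second step: from (non-strict) monotonicity you know $Q'\leq 0$ on $(0,\infty)$ and $L_+Q'=0$, so Lemma \ref{lem:perron2} upgrades this to $Q'(x)<0$ for all $x>0$.
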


\begin{proof}
To deduce Lemma \ref{lem:symm} with the slightly weaker statement that $\widetilde{Q}(r)$ is (not necessarily strictly)  decreasing, we can directly apply the moving plane arguments developed in \cite{MaZh10}. More precisely, by following \cite[Section 5]{MaZh10}, we only have to verify that the kernel $K=K(x-y)$ for the resolvent $((-\DD)^s+1)^{-1}$ on $\R$ satisfies the following conditions: 1.) $K(|z|)$ is real-valued and even, 2.) $K(|z|) > 0$ for $z \in \R$, and 3.) $K(|z|)$ is monotone decreasing in $|z|$. Indeed, we have all these facts about $K(x-y)$ thanks to Lemma \ref{lem:resolvent_p}, which is based on the properties of the heat kernel $e^{-t(-\DD)^s}$ on $\R$.  

Finally, we show that $\widetilde{Q}'(r) < 0$ for $r > 0$. Without loss of generality, we can assume that $x_0 = 0$ and hence $Q(x) = \widetilde{Q}(|x|) > 0$. By differentiating \eqref{eq:Qappendix} with respect to $x$, we obtain $$L_+ Q' = 0$$ 
where $L_+ = (-\DD)^s + 1 - (\alpha+1) Q^\alpha$. Note that $Q' \in L^2_{\mathrm{odd}}(\R)$ and $Q'(x) = \widetilde{Q}'(r) \leq 0$ for $x=r > 0$, since $\widetilde{Q}(r)$ is monotone decreasing. In view of Lemma \ref{lem:perron2} applied to $L_+$, we deduce that $Q' \in L^2_{\mathrm{odd}}(\R)$ is the ground state eigenfunction of $L_+$ restricted to $L^2_{\mathrm{odd}}(\R)$. Thus we either have $Q'(x) < 0$ or $Q'(x) > 0$ for $x > 0$, where the latter alternative is clearly ruled out. Hence $Q'(x) = \widetilde{Q}'(r) < 0$ for $x=r > 0$.
\end{proof}

\section{The  Kato Class $K_s$ and \\ Perron-Frobenius Theory for $H=(-\DD)^s+V$}

\label{app:kato}

In this section, we collect some basic results about fractional Schr\"odinger operators
$$
H=(-\DD)^s + V  \quad \mbox{acting on $L^2(\R)$}.
$$
Although most of our discussion generalizes to higher space dimensions $d \geq 1$, we shall content ourselves with the one-dimensional case. In Section \ref{sec:nodal}, we defined $K_s$ as the {\em `Kato-class'} with respect to $(-\DD)^s$; see Definition \ref{def:kato}. In particular, the condition $V \in K_s$ guarantees that the heat semi-group $e^{-t H}$ maps to $L^2(\R)$ to $L^\infty(\R) \cap C^0(\R)$ for $t > 0$. In particular, any $L^2$-eigenfunction of $H$ is bounded and continuous. See \cite{CaMaSi90} for more details.

First, we derive the following sufficient condition in terms of $L^p$-spaces for a potential $V$ to be in $K_s$. (Although the following result may be known in the literature, we were not able to find a suitable reference.)

\begin{lemma} \label{lem:katoclass}
Let $0 < s < 1$ and $V : \R \to \R$ be given. Then the following holds.

 If $0 < s \leq 1/2$ and $V \in L^p(\R)$ for some $p > 1/2s$, then $V \in K_s$. If $1/2 < s < 1$ and $V \in L^p(\R)$ for some $p \geq 1$, then $V \in K_s$.
\end{lemma}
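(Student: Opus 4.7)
The plan is to reduce the Kato-class condition to an explicit decay estimate on the convolution kernel of the resolvent. Let $G_{s,E}$ denote the integral kernel of $((-\DD)^s+E)^{-1}$, whose basic properties I have established in Lemma A.3. Since $G_{s,E} > 0$ by part (ii) of that lemma, for any $g \in L^\infty(\R)$ one has the pointwise bound
\[
|((-\DD)^s+E)^{-1}(|V|g)(x)| \leq \|g\|_\infty \int G_{s,E}(x-y)|V(y)|\,dy,
\]
so the entire task reduces to showing that $\sup_{x\in\R}(G_{s,E}*|V|)(x) \to 0$ as $E \to \infty$.

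For the main case, I would apply H\"older's inequality with conjugate exponent $p' = p/(p-1)$, which gives $(G_{s,E}*|V|)(x) \leq \|G_{s,E}\|_{p'}\|V\|_p$. The hypothesis $p > 1/(2s)$ translates precisely to $1 - 1/p' < 2s$, which is the range where Lemma A.3(iii) applies and yields a bound of the form
\[
\|G_{s,E}\|_{p'} \leq C_{s,p} \, E^{\frac{1}{2s}(1-\frac{1}{p'})-1} = C_{s,p}\, E^{\frac{1}{2sp}-1}.
\]
The exponent $\frac{1}{2sp}-1$ is strictly negative exactly when $p > 1/(2s)$, so $\|G_{s,E}\|_{p'} \to 0$ as $E \to \infty$. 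This handles every $p > 1/(2s) \geq 1$ in the regime $0 < s \leq 1/2$, and every $p > 1$ in the regime $1/2 < s < 1$ (using $1/(2s) < 1$).

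The only case not covered by the above is $p = 1$ in the regime $1/2 < s < 1$. Here I would argue separately, using that $s > 1/2$ forces $(|\xi|^{2s}+E)^{-1} \in L^1(\R)$, so $G_{s,E}$ is a bounded continuous function with
\[
\|G_{s,E}\|_\infty \leq \frac{1}{2\pi}\int_\R \frac{d\xi}{|\xi|^{2s}+E} = C_s \, E^{\frac{1}{2s}-1}
\]
by the substitution $\xi = E^{1/(2s)}\eta$; since $s > 1/2$ the exponent $\frac{1}{2s}-1$ is negative, hence $(G_{s,E}*|V|)(x) \leq \|G_{s,E}\|_\infty \|V\|_1 \to 0$.

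I do not foresee any serious obstacle: the argument is essentially a matching exercise between the integrability hypothesis on $V$ and the exponent of $E$ in the kernel bounds of Lemma A.3. The only subtle point is that the natural H\"older argument breaks down at the endpoint $p=1$, $s>1/2$, but this case is easily recovered from the $L^\infty$-bound on $G_{s,E}$, which becomes available precisely when $s > 1/2$.
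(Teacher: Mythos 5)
Your argument is correct, and it reaches the conclusion by a slightly different (though closely related) route than the paper. You work at the level of the resolvent kernel: Hölder gives $\sup_x (G_{s,E}*|V|)(x) \leq \|G_{s,E}\|_{p'}\|V\|_p$, and you then quote the bound of Lemma \ref{lem:resolvent_p}(iii), noting correctly that $1-\tfrac{1}{p'}=\tfrac1p<2s$ is exactly the hypothesis $p>1/(2s)$ (and is automatic for $p>1$, $s>1/2$), so that $\|G_{s,E}\|_{p'}\lesssim E^{\frac{1}{2sp}-1}\to 0$. The paper instead works at the level of the heat kernel, writing $((-\DD)^s+E)^{-1}=\int_0^\infty e^{-Et}e^{-t(-\DD)^s}\,dt$, bounding $\|e^{-t(-\DD)^s}\|_{L^p\to L^\infty}\leq\|P^{(s)}(\cdot,t)\|_{p'}\lesssim t^{-\frac{1}{2sp}}$ via Lemma \ref{lem:feller}, and then letting the Laplace integral $\int_0^\infty e^{-Et}t^{-\frac{1}{2sp}}\,dt$ produce the same power $E^{\frac{1}{2sp}-1}$. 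Since Lemma \ref{lem:resolvent_p}(iii) is itself proved by exactly this subordination, your proof is essentially the paper's argument with the $t$-integration already absorbed into the quoted kernel estimate; what you buy is brevity, what you lose is the endpoint: part (iii) requires $1<p'<\infty$, so the case $p=1$, $s>1/2$ needs your separate Fourier-side bound $\|G_{s,E}\|_\infty\lesssim E^{\frac{1}{2s}-1}$ (which is fine, and in fact could also be read off from Lemma \ref{lem:resolvent_p}(iv) only up to the decay in $E$, so your direct scaling computation is the right fix), whereas the paper's formulation with $q=\infty$ in Young's inequality covers $p=1$ within the same computation, the convergence of the $t$-integral there being precisely the condition $s>1/2$.
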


\begin{proof}
In view of Definition \ref{def:kato}, we have to show that 
\begin{equation} \label{eq:kato_limit}
\lim_{E \to +\infty}  \left \| ((-\DD)^s + E)^{-1} |V| Ê\right \|_{L^\infty \to L^\infty} = 0.
\end{equation}
Using that $((-\DD)^s +E)^{-1} = \int_0^\infty e^{-Et} e^{-t(-\DD)^s} dt$ for $E > 0$ and H\"older's inequality, we obtain
$$
\left \|( (-\DD)^s+E)^{-1} |V| \right \|_{L^\infty \to L^\infty}  \leq \| V \|_p \int_0^\infty e^{-Et} \| e^{-t (-\DD)^s} \|_{L^p \to L^\infty} \, dt
$$
Next, let $P^{(s)}(x,t)$ denote the kernel of $e^{-t(-\DD)^s}$ on $\R$. By Young's inequality, we have $\| e^{-t(-\DD)^s} \|_{L^p \to L^\infty} \leq \| P^{(s)}(\cdot, t) \|_{q}$ with $\frac{1}{p} + \frac{1}{q} = 1$. Next, we find that
$$
\| P^{(s)}(\cdot,t) \|_q \leq C\left ( \int_{|x| < t^{\frac{1}{2s}}} t^{-\frac{q}{2s}} \, dx +   \int_{ |x| \geq t^{\frac{1}{2s}}} \frac{t^q}{|x|^{q(1+2s)}} \, dx \right )^\frac{1}{q} \leq C t^{-\frac{1}{2s} \frac{q-1}{q} },
$$
where the constant $C > 0$ only depends on $s$.  Indeed, this follows from the simple bound $ P^{(s)}(x,t) \leq  C t^{-\frac{1}{2s}}$ for all $x \in \R$ from Lemma \ref{lem:feller}, combined with $s$-dependent bound stated in Remark \ref{rem:blumenthal} and the scaling property $P^{(s)}(x,t) = t^{-\frac{1}{2s}} P^{(s)}(t^{-\frac{1}{2s}} x,1)$ for $t > 0$.
Because of $\frac{1}{p}=\frac{q-1}{q}$, the previous bound for $\| P^{(s)}(\cdot,t) \|_q$ implies that
$$
\left \|( (-\DD)^s+E)^{-1} |V| \right \|_{L^\infty \to L^\infty} \leq C \| V \|_p \int_0^\infty e^{-Et} t^{-\frac{1}{2sp}  } \, dt  .
$$
From this we deduce that \eqref{eq:kato_limit} holds if $p>1/2s$ for $s \leq 1/2$, or if $p \geq 1$ for $s > 1/2$. \end{proof}

As a next result, we show that fractional Schr\"odinger operators $H= (-\DD)^s +V$ enjoy the following `Perron-Frobenius' property.

\begin{lemma} \label{lem:perron}
Let $0 < s <1$ and consider $H = (-\DD)^s + V$ acting on $L^2(\R)$, where we assume that $V \in K_s$. Suppose that $e = \inf \sigma(H)$ is an eigenvalue. Then $e$ is simple and its corresponding eigenfunction $\psi = \psi(x) > 0$ is positive (after replacing $\psi$ by $-\psi$ if neccessary).
\end{lemma}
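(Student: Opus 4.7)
The natural strategy is to reduce the problem to an abstract Perron--Frobenius statement for the heat semigroup $e^{-tH}$ and then verify that $e^{-tH}$ is positivity improving; the simplicity and strict positivity of the ground state then follow from the standard argument that any $L^2$ eigenfunction of a positivity improving self-adjoint operator associated to its operator norm must be simple and strictly positive a.e.~(see, e.\,g., Reed--Simon, Vol.~IV, Thm.~XIII.43). Concretely, since $e = \inf\sigma(H)$, the number $e^{-te}$ equals the operator norm of $e^{-tH}$ and it is an eigenvalue of $e^{-tH}$ with eigenvector $\psi$; so it suffices to prove that $e^{-tH}$ is positivity improving for every $t > 0$.

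The first step is the free semigroup. By Lemma \ref{lem:Pt_unimodal}, the heat kernel $P^{(s)}(x,t)$ is strictly positive for all $(x,t) \in \R \times \R_+$, hence $e^{-t(-\DD)^s}$ maps nonnegative, nontrivial $L^2$-functions into a.e.-strictly-positive ones, i.\,e., it is positivity improving. Multiplication by $e^{-tV}$ is clearly positivity preserving (even when $V$ is only measurable and real-valued). The plan is then to use the Trotter product formula
\begin{equation*}
e^{-tH} = \underset{n\to\infty}{\text{s-lim}}\, \bigl(e^{-(t/n)(-\DD)^s}\, e^{-(t/n)V}\bigr)^n,
\end{equation*}
which is applicable under the form-smallness of $V$ with respect to $(-\DD)^s$ guaranteed by $V\in K_s$ (see Remark 2 after Definition \ref{def:kato}). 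Each factor in the product is positivity preserving and $e^{-(t/n)(-\DD)^s}$ is moreover positivity improving, so one shows in the usual way that the composition is positivity improving, and the positivity-preserving property passes to the strong limit. One technical nuisance is that $e^{-tV}$ need not be bounded when $V$ is only in $K_s$; the standard remedy is to first carry the argument out for the bounded truncations $V_n := \max\{\min\{V,n\},-n\}$, obtain positivity improvement for $e^{-tH_n}$ with $H_n = (-\DD)^s + V_n$, and then pass to the limit using norm resolvent convergence $H_n\to H$ (a consequence of $V \in K_s$), which gives $e^{-tH_n}\to e^{-tH}$ strongly and preserves the positivity-preserving property on the limit; strict positivity for the limit is recovered by writing $e^{-tH} = e^{-(t/2)H} e^{-(t/2)H}$ and inserting one application of $e^{-(t/2)(-\DD)^s}$ on suitable nonnegative test functions via the Duhamel formula $e^{-tH} - e^{-t(-\DD)^s} = -\int_0^t e^{-(t-\tau)H} V e^{-\tau (-\DD)^s}\,d\tau$, using the strict positivity of $P^{(s)}$.

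With positivity improvement of $e^{-tH}$ in hand, the abstract Perron--Frobenius theorem yields that the eigenvalue $e^{-te}$ of $e^{-tH}$ is simple and its eigenfunction $\psi$ may be chosen strictly positive a.e. Since $\psi \in L^2(\R)$ is an eigenfunction of $H$ with $V\in K_s$, the continuity remark after Definition \ref{def:kato} ensures $\psi \in C^0(\R)$, whence $\psi(x) > 0$ holds pointwise, not just almost everywhere. Simplicity of $e$ as an eigenvalue of $H$ is equivalent to simplicity of $e^{-te}$ as an eigenvalue of $e^{-tH}$, completing the proof.

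The main obstacle is the positivity-improving property of $e^{-tH}$ itself: while the fractional heat kernel provides strict positivity of $e^{-t(-\DD)^s}$ by Lemma \ref{lem:Pt_unimodal}, the interaction with a merely Kato-class potential requires care, both for the Trotter product formula (form smallness) and for the limiting argument from bounded to $K_s$ potentials.
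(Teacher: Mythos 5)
Your proposal is correct and follows essentially the same route as the paper: positivity improvement of $e^{-t(-\DD)^s}$ via the strict positivity of the kernel in Lemma \ref{lem:Pt_unimodal}, relative (form) boundedness of $V$ from $V \in K_s$, and then the abstract Perron--Frobenius theorem for the semigroup $e^{-tH}$, transferred back to $H$ by functional calculus. The only difference is that the paper simply cites \cite{ReSi78} for the ``standard Perron--Frobenius type arguments,'' whereas you spell out the Trotter/truncation/Duhamel details that reference contains.
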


\begin{proof}
By Lemma \ref{lem:Pt_unimodal}, the operator $e^{-t (-\DD)^s}$ acting on $L^2(\R)$ is {\em positivity improving} for $t > 0$. By this, we mean that if $f \geq 0$ and $f \not \equiv 0$, then $e^{-t (-\DD)^s} f > 0$.

Next, we consider $H=(-\DD)^s+V$ acting on $L^2(\R)$. Since $V \in K_s$, it follows that $V$ is an infinitesimally bounded perturbation of $(-\DD)^s$. Hence we can apply standard Perron-Frobenius type arguments (see, e.\,g., \cite{ReSi78}) to deduce that the largest eigenvalue of $e^{-tH}$ is simple and its corresponding eigenfunction strictly positive. By functional calculus, this fact is equivalent to saying that the lowest eigenvalue of $H$ is simple and has a positive eigenfunction. 
\end{proof}

\begin{lemma} \label{lem:perron2}
Let $H = (-\DD)^s + V$ be as in Lemma \ref{lem:perron}. Moreover, we assume that $V = V(|x|)$ is even and let $H_{\mathrm{odd}}$ denote the restriction of $H$ to $L^2_{\mathrm{odd}}(\R)$. If $e = \inf \sigma ( H_{\mathrm{odd}})$ is an eigenvalue, then $e$ is simple and the corresponding odd eigenfunction $\psi = \psi(x)$ satisfies $\psi(x) > 0$ for $x > 0$ (after replacing $\psi$ by $-\psi$ if neccessary).
\end{lemma}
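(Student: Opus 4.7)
My plan to prove Lemma \ref{lem:perron2} is to reduce the problem to a standard Perron-Frobenius argument on the half-line $\R_+$ via the unitary identification $U : L^2_{\mathrm{odd}}(\R) \to L^2(\R_+)$ defined by $(U\psi)(x) = \sqrt{2}\,\psi(x)$ for $x > 0$. Since $V = V(|x|)$ is even and $(-\DD)^s$ commutes with parity, the operator $H$ preserves $L^2_{\mathrm{odd}}(\R)$, so $\widetilde{H} := U H_{\mathrm{odd}} U^{-1}$ is self-adjoint on $L^2(\R_+)$. The goal becomes to show that $e^{-t\widetilde{H}}$ is positivity improving on $L^2(\R_+)$ for every $t > 0$; once this is established, the same Perron-Frobenius argument used in the proof of Lemma \ref{lem:perron} yields simplicity of the lowest eigenvalue $e$ of $\widetilde{H}$ (equivalently $H_{\mathrm{odd}}$) together with strict positivity of its eigenfunction on $\R_+$, which pulls back to the desired statement about $\psi$.

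The key computation is that of the free semigroup in the odd sector. For $\psi \in L^2_{\mathrm{odd}}(\R)$ and $x > 0$, splitting the convolution $P^{(s)}(\cdot,t)*\psi$ into integrals over $(0,\infty)$ and $(-\infty,0)$ and invoking the oddness of $\psi$ yields
\begin{equation*}
(e^{-t(-\DD)^s}\psi)(x) = \int_0^\infty K_t(x,y)\,\psi(y)\,dy, \qquad K_t(x,y) := P^{(s)}(x-y,t) - P^{(s)}(x+y,t).
\end{equation*}
Since $P^{(s)}(\cdot,t)$ is strictly positive, even, and strictly decreasing in $|x|$ by Lemma \ref{lem:Pt_unimodal}, and since $|x-y| < |x+y|$ whenever $x,y > 0$, the kernel $K_t(x,y)$ is strictly positive on $(0,\infty)\times(0,\infty)$. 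This is the crucial new input; it replaces the full-line strict positivity of $P^{(s)}$ that was used in Lemma \ref{lem:perron}.

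To incorporate the potential I will apply the Trotter product formula
\begin{equation*}
e^{-t\widetilde{H}} = \text{s-lim}_{n\to\infty} \bigl(e^{-tV(|\cdot|)/n}\,e^{-t\widetilde{H}_0/n}\bigr)^n
\end{equation*}
on $L^2(\R_+)$, where $\widetilde{H}_0$ denotes the restriction of $(-\DD)^s$ to the odd sector transported by $U$; the formula is valid since $V \in K_s$ is infinitesimally form-bounded with respect to $(-\DD)^s$. For each finite $n$, multiplication by the strictly positive function $e^{-tV(|x|)/n}$ preserves positivity, and composition with the positivity-improving free semigroup keeps the whole product positivity improving. Passing to the strong limit by a standard argument (cf.~\cite{ReSi78}, Theorem XIII.45) then shows that $e^{-t\widetilde{H}}$ itself is positivity improving on $L^2(\R_+)$ for each $t > 0$, at which point Perron-Frobenius gives both simplicity of $e$ and the strict positivity $(U\psi)(x) > 0$ on $\R_+$. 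The only mildly delicate step is the strict positivity of $K_t(x,y)$, which rests squarely on the strict monotonicity of the full-line heat kernel from Lemma \ref{lem:Pt_unimodal}; the remaining steps are routine adaptations of the arguments already used for Lemma \ref{lem:perron}.
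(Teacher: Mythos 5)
Your argument is essentially the paper's own proof: the decisive step in both is that the odd-sector free heat kernel $K_t(x,y)=P^{(s)}(x-y,t)-P^{(s)}(x+y,t)$ is strictly positive on $(0,\infty)\times(0,\infty)$ by the strict monotonicity of $P^{(s)}$ from Lemma \ref{lem:Pt_unimodal}, after which the potential is absorbed by the standard Perron--Frobenius machinery (the paper also invokes the Reed--Simon argument you cite, via the proof of Lemma \ref{lem:perron}). Your Trotter-formula discussion merely spells out what that standard reference does, so the proposal is correct and follows the same route.
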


\begin{proof}
This result follows by a slight twist of standard abstract Perron-Frobenius arguments.

Let $(-\DD)^s_{\mathrm{odd}}$ denote the restriction of $(-\DD)^s$ on $L^2_{\mathrm{odd}}(\R)$. By odd symmetry, we find that $e^{-t (-\DD)^s_{\mathrm{odd}}}$  acts on $f \in L^2_{\mathrm{odd}}(\R)$ according to
 \begin{equation}
(e^{-t (-\DD)^s_{\mathrm{odd}}} f)(x) = \int_0^\infty K_{t,s}(x,y) f(y) \, dy .
\end{equation}
Here the integral kernel $K_{t,s}(x,y)$ is given by
\begin{equation}
K_{t,s}(x,y) = P^{(s)}(x-y,t) - P^{(s)}(x+y,t),
\end{equation}
with $P^{(s)}(x,t)$ denoting the Fourier transform of $e^{-t |\xi|^{2s}}$ in $\R$. Now, we claim that $K_t(x,y) > 0$ holds for $0 < x,y < \infty$. Indeed, recall that $P^{(s)}(x,t)$ is even in $x$, positive and strictly decreasing with respect to $|x|$; see Lemma \ref{lem:Pt_unimodal}. Hence if we write $z = x-y$ and $z' = x+y$ for $x,y > 0$, we easily check that $|z| < |z'|$ holds. Therefore we deduce that $K_{t,s}(x,y) > 0$ is a strictly positive kernel on $L^2(\Rplus)$. Hence $e^{-t (-\DD)^s_{\mathrm{odd}}}$ can be identified with a positivity improving operator on $L^2(\Rplus)$. 	

Now, we consider $H_{\mathrm{odd}} = (-\DD)^s_{\mathrm{odd}} + V$ with $V = V(|x|)$ even. Using standard Perron-Frobenius arguments (see the proof of Lemma \ref{lem:perron} and reference there), we deduce that the largest eigenvalue of $e^{-tH_{\mathrm{odd}}}$ on  $L^2(\Rplus)$ is simple and its corresponding eigenfunction satisfies $\psi_0 = \psi_0(x) > 0$ for $x > 0$. By functional calculus, this fact now implies Lemma \ref{lem:perron2} about $H_{\mathrm{odd}}$. \end{proof}

\section{A Topological Lemma}

The following auxiliary result is needed in Section \ref{sec:nodal}.

\begin{lemma} \label{lem:jordan}
Let $x_1 < x_2 < x_3 < x_4$ be real numbers. Suppose that $\gamma, \widetilde{\gamma} : [0,1] \to \overline{\R}^+_2$ are simple (i.\,e.~injective) continuous curves such that 
\begin{enumerate}
\item[(i)] \mbox{$\gamma(0) = (x_1,0), \gamma(1) = (x_3,0)$ and $\gamma(t) \in \R^2_+$ for $t \in (0,1)$}.
\item[(ii)] \mbox{$\widetilde{\gamma}(0)=(x_2,0), \gamma(1) = (x_4,0)$ and $\gamma(t) \in \R^2_+$ for $t \in (0,1)$}. 
\end{enumerate}
Then $\gamma$ and $\widetilde{\gamma}$ intersect in $\R^2_+$, i.\,e., we have $\gamma(t) = \widetilde{\gamma}(t_*)$ for some $t \in (0,1)$ and $t_* \in (0,1)$. 
\end{lemma}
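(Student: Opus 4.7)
The plan is to reduce the claim to the Jordan curve theorem. First, I would form a simple closed curve $J$ by concatenating $\gamma$ with the closed real-axis segment $S$ joining $(x_1,0)$ to $(x_3,0)$. Since $\gamma$ is injective and $\gamma((0,1)) \subset \R^2_+$, the image $\gamma([0,1])$ meets the real axis only at its two endpoints, so $J$ is indeed a Jordan curve. The Jordan curve theorem then decomposes $\R^2 \setminus J$ into a bounded interior $U_{\mathrm{int}}$ and an unbounded exterior $U_{\mathrm{ext}}$.

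The next step is to locate the relevant base points. Because $J \subset \overline{\R^2_+}$, the open lower half-plane is connected, unbounded, and disjoint from $J$, so it is contained in $U_{\mathrm{ext}}$, and consequently $U_{\mathrm{int}} \subset \R^2_+$. The point $(x_4,0)$ lies on neither $S$ (as $x_4 > x_3$) nor $\gamma$ (whose intersection with the real axis is exactly $\{(x_1,0),(x_3,0)\}$), and the downward vertical ray from $(x_4,0)$ misses $J$ entirely; hence $(x_4,0) \in U_{\mathrm{ext}}$. For the starting point of $\widetilde\gamma$, I would use that $(x_2,0) \notin \gamma([0,1])$ and compactness of $\gamma([0,1])$ to find a small disk $B$ around $(x_2,0)$ inside which $J$ coincides with the horizontal diameter $S \cap B$. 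The short vertical segment from $(x_2,-\eps)$ to $(x_2,\eps)$ then crosses $J$ transversally exactly once, so its endpoints lie in different components; since the lower endpoint sits in $U_{\mathrm{ext}}$, the upper one must lie in $U_{\mathrm{int}}$, and by connectedness the entire upper half-disk $B \cap \R^2_+$ is contained in $U_{\mathrm{int}}$.

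To finish, I would combine these facts with a standard connectedness argument. Since $\widetilde\gamma(t) \to (x_2,0)$ with $\widetilde\gamma(t) \in \R^2_+$ as $t \to 0^+$, there exists $t_0 > 0$ with $\widetilde\gamma(t_0) \in B \cap \R^2_+ \subset U_{\mathrm{int}}$, whereas $\widetilde\gamma(1) = (x_4,0) \in U_{\mathrm{ext}}$. Openness of $U_{\mathrm{int}}$ and $U_{\mathrm{ext}}$ together with connectedness of $[t_0,1]$ then forces $\widetilde\gamma|_{[t_0,1]}$ to meet $J$ at some parameter $t_*$. Because $\widetilde\gamma(t) \in \R^2_+$ for $t \in (0,1)$ while $S$ lies on the real axis, this hit cannot occur on $S$, and since $(x_4,0) \notin J$ it cannot occur at $t_* = 1$; hence $t_* \in (0,1)$ and $\widetilde\gamma(t_*) \in \gamma((0,1))$, which produces some $t \in (0,1)$ with $\gamma(t) = \widetilde\gamma(t_*)$. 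The only delicate point in the whole argument is the local analysis at $(x_2,0)$ needed to certify that $\widetilde\gamma$ enters $U_{\mathrm{int}}$ immediately after $t=0$; everything else is a direct application of the Jordan curve theorem.
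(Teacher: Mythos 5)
Your argument is correct and is essentially the paper's own proof: close $\gamma$ up with the real-axis segment from $(x_1,0)$ to $(x_3,0)$ to get a Jordan curve, put $(x_4,0)$ in the unbounded component via the downward ray, show the upper half-disk at $(x_2,0)$ lies in the bounded component, and conclude that $\widetilde{\gamma}$ must cross the Jordan curve at a point of $\R^2_+$, hence on $\gamma((0,1))$. The only spot to phrase carefully is your ``crosses transversally once, hence switches components'' step, which (as in the paper) is justified by the boundary statement of the Jordan curve theorem, $\partial U_{\mathrm{int}} = \partial U_{\mathrm{ext}} = J$, combined with your local disk in which $J$ is the horizontal diameter.
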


\begin{proof}
We define the continuous curve $\widehat{\gamma} : [0,1] \to \overline{\R}_+^2$ by setting 
$$
\widehat{\gamma}(t) := \left \{ \begin{array}{ll} \gamma(2t) & \mbox{for $0 \leq t \leq 1/2$,}\\
( (2t-1)(x_1 - x_3) +x_3, 0) & \mbox{for $1/2 < t \leq 1$.} \end{array} \right .
$$ 
Note that $\widehat{\gamma}(0) = \widehat{\gamma}(1) = (x_1,0)$. Clearly $\widehat{\gamma}$ is a Jordan curve (i.\,e., a simple and closed continuous curve) in $\R^2$. By Jordan's curve theorem (see \cite{Ma84} for a simple proof based on Brouwer's fixed point theorem) the set $A = \R^2 \setminus  \widehat{\gamma}([0,1])$ has exactly two open connected components in $\R^2$. Let us denote these two components by $B$ and $C$ in what follows. Moreover, we have that $B$, say, is bounded, whereas the component $C$ is unbounded. Finally, the Jordan curve theorem states that $\widehat{\gamma}([0,1]) = \partial B = \partial C$ holds. Next, we consider the sets 
$$N_{\eps,+}(x_2) = \{ (x,y) \in \R^2 : \sqrt{(x-x_2)^2 + y^2} < \eps, \; y > 0 \},
$$
$$
N_{\eps,-}(x_2) = \{ (x,y) \in \R^2 : \sqrt{(x-x_2)^2 + y^2} < \eps, \; y < 0 \},
$$
where $\eps > 0$ is given. Since $x_1 < x_2 < x_3$ by assumption and by construction of $\widehat{\gamma}$, we have that $(x_2,0) \in \widehat{\gamma}([0,1])$. Suppose now that $(\tilde{x}, \tilde{y}) \in N_{\eps,-}(x_2)$ where $\eps > 0$ is arbitrary. Clearly, we can connect the point $(\tilde{x}, \tilde{y})$ with $(x_4,0)$ by a continuous curve in the lower halfplane without intersecting the Jordan curve $\widehat{\gamma}$. Furthermore, it is obvious $(x_4,0)$ that belongs to the unbounded component $C$ (by connecting it to $(x_4,y)$ with $y \to -\infty$ without intersecting $\widehat{\gamma}$.) Hence we conclude that $N_{\eps,-}(x_2) \subset C$ for any $\eps > 0$. On the other hand, we recall that $\partial A = \widehat{\gamma}([0,1])$. Since $N_{\eps,-}(x_2) \cap B = \emptyset$ for all $\eps > 0$, we find that $N_{+,\eps}(x_2) \subset B$ for some $\eps > 0$ sufficiently small. 

Now we conclude as follows. First, we note that $N_{\eps,+}(x_4) \subset C$ for $\eps > 0$ sufficiently small, since $C$ is open and $(x_4,0) \in C$. Second, from $\widetilde{\gamma}(0) = (x_2,0)$ and $\widetilde{\gamma}(1) = (x_4,0)$ and by continuity, we deduce from $\widetilde{\gamma}(t) \in \R^2_+$ for $t \in (0,1)$ that
$$
\mbox{$\widetilde{\gamma}(t) \in N_{+,\eps}(x_2) \subset B$ for $t$ close to 0}, \quad \mbox{$\widetilde{\gamma}(t) \in N_{\eps,+}(x_4) \subset C$ for $t$ close to 1}.
$$
with some $\eps > 0$ sufficiently small.  Hence there exists  $t_* \in (0,1)$ such that $\widetilde{\gamma}(t_*) \in \widehat{\gamma}([0,1])$. But since $\widetilde{\gamma}(t_*)$ lies in the upper halfplane $\R^2_+$, we actually deduce that $\widetilde{\gamma}$ must intersect $\gamma$ in $\R^2_+$. \end{proof}

\section{Regularity of $F$}

We define the map
\begin{equation}
F(Q,\lambda, s) := \left [ \begin{array}{c} \displaystyle Q- \frac{1}{(-\DD)^s + \lambda}  |Q|^\alpha Q  \\[1ex] \| Q  \|_{\alpha+2}^{\alpha+2} - c_0 \end{array} \right ] ,
\end{equation}
for $Q \in L^2(\R) \cap L^{\alpha+2}(\R)$, $s_0 \leq s < 1$ and $\lambda > 0$. Here $c_0 \in \R$ is some fixed constant. 

\begin{lemma} \label{lem:F_C1}
Let $0 < s_0 < 1$ and $0 < \alpha < \amax(s_0)$ be fixed. Consider the real Banach space $X^{\alpha} = L^2(\R) \cap L^{\alpha+2}(\R)$ equipped with the norm $\| \cdot \|_{X^\alpha} = \| \cdot \|_2 +  \| \cdot \|_{\alpha+2}$. Define $F(Q,\lambda,s)$ as above. Then the map $F : X^\alpha \times \Rplus \times [s_0,1) \to X^\alpha \times \R$ is $C^1$.
\end{lemma}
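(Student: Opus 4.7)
The plan is to verify well-definedness of $F$ and then establish the $C^1$ property by computing the partial Fr\'echet derivatives in $Q$, $\lambda$, and $s$ separately and showing that each depends continuously on $(Q,\lambda,s)$.

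For well-definedness, the second component lies in $\R$ since $Q \in L^{\alpha+2}(\R)$. For the first component, I observe that the map $Q \mapsto |Q|^\alpha Q$ sends $X^\alpha$ into $L^{(\alpha+2)/(\alpha+1)}(\R) \cap L^2(\R)$, the second inclusion following from $L^\infty$-like interpolation using $Q \in L^2 \cap L^{\alpha+2}$. Writing $((-\DD)^s+\lambda)^{-1}f = G_{s,\lambda}*f$, I would apply Young's convolution inequality together with the $L^r$-bounds from Lemma~\ref{lem:resolvent_p}(iii). The Young exponents $1/r = (\alpha+4)/(2(\alpha+2))$ (output in $L^2$) and $1/r = 2/(\alpha+2)$ (output in $L^{\alpha+2}$) both satisfy $1-1/r < 2s$ uniformly for $s \in [s_0,1)$, precisely because the condition $\alpha < \amax(s_0)$ is equivalent to $\alpha/(2(\alpha+2)) < s_0$. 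Hence $F$ maps into $X^\alpha \times \R$ with uniform control in $s$.

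For the partial derivatives, I compute formally
\[
\partial_Q F = \begin{pmatrix} I - ((-\DD)^s+\lambda)^{-1}(\alpha+1)|Q|^\alpha \\[0.5ex] (\alpha+2)\langle |Q|^\alpha Q,\,\cdot\,\rangle \end{pmatrix}, \qquad
\partial_\lambda F = \begin{pmatrix} ((-\DD)^s+\lambda)^{-2}|Q|^\alpha Q \\[0.5ex] 0 \end{pmatrix},
\]
and, using $\partial_s (-\DD)^s = (-\DD)^s \log(-\DD)$,
\[
\partial_s F = \begin{pmatrix} -((-\DD)^s+\lambda)^{-2} (-\DD)^s\log(-\DD)\, |Q|^\alpha Q \\[0.5ex] 0 \end{pmatrix}.
\]
The $\partial_Q F$ part rests on the Nemytskii statement that $Q \mapsto |Q|^\alpha Q$ is $C^1$ from $L^{\alpha+2}(\R)$ into $L^{(\alpha+2)/(\alpha+1)}(\R)$ with derivative the multiplication operator by $(\alpha+1)|Q|^\alpha$; this is standard and follows from a pointwise Taylor bound of the form $\bigl| |a+h|^\alpha(a+h) - |a|^\alpha a - (\alpha+1)|a|^\alpha h \bigr| \leq C_\alpha\bigl(|a|^{\alpha-1}+|h|^{\alpha-1}\bigr)|h|^2$ (with the obvious modification for $\alpha<1$), followed by H\"older and dominated convergence. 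The $\partial_\lambda F$ part uses that the kernel of $((-\DD)^s+\lambda)^{-2}$ is $G_{s,\lambda}*G_{s,\lambda}$, whose $L^r$-norms obey the same type of bound as in Lemma~\ref{lem:resolvent_p}(iii) by an additional Young's inequality.

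The most delicate term is $\partial_s F$. Its Fourier symbol $|\xi|^{2s}\log(|\xi|^2)/(|\xi|^{2s}+\lambda)^2$ is bounded and decays at both $\xi=0$ and $|\xi|\to\infty$ (faster than any $|\xi|^{-2s+\eps}$), so by a Mikhlin-type multiplier theorem it defines a bounded operator on $L^p(\R)$ for every $1<p<\infty$. Composing with the Young-type bound on $|Q|^\alpha Q$ as above yields boundedness from $X^\alpha$ to $X^\alpha$. To justify that this expression is indeed the Fr\'echet derivative I would use the semigroup representation $((-\DD)^s+\lambda)^{-1} = \int_0^\infty e^{-\lambda t}e^{-t(-\DD)^s}\,dt$ and differentiate under the integral, controlling the resulting integrand by dominated convergence using the uniform kernel bounds of Lemma~\ref{lem:feller}.

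Finally, continuity of the three partials: continuity in $Q$ follows from the Nemytskii theory; continuity in $(s,\lambda)$ follows from pointwise continuity of $G_{s,\lambda}(x)$ (and of its $\lambda$- and $s$-derivatives) combined with the uniform $L^r$-bounds of Lemma~\ref{lem:resolvent_p} and dominated convergence. Since continuous partial Fr\'echet derivatives imply $C^1$ on the product Banach manifold $X^\alpha \times \Rplus \times [s_0,1)$, the lemma follows. The main obstacle is the $s$-derivative: both constructing it rigorously (rather than as a formal multiplier) and verifying its continuity require careful handling of the logarithmic factor, which is precisely where the semigroup representation and the uniform heat kernel bounds are used.
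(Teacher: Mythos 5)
Your overall skeleton (well-definedness via the resolvent kernel bounds of Lemma \ref{lem:resolvent_p} combined with Young and H\"older, then computation and continuity of the three partial derivatives) is the same as the paper's, and your treatment of $\partial_Q F$ (Nemytskii differentiability of $Q \mapsto |Q|^\alpha Q$ from $L^{\alpha+2}$ to $L^{(\alpha+2)/(\alpha+1)}$, composed with the smoothing resolvent) and of $\partial_\lambda F$ is sound; the paper handles these with the same pointwise inequalities and with the estimate \eqref{ineq:Fwell}. Two slips: the sign of the $s$-derivative should be $+\,((-\DD)^s+\lambda)^{-2}(-\DD)^s\log(-\DD)\,|Q|^\alpha Q$, since the minus sign in front of the resolvent in $F_1$ cancels the minus coming from differentiating the symbol; and your aside that $|Q|^\alpha Q \in L^2(\R)$ is false for $\alpha>0$ (interpolation of $L^2\cap L^{\alpha+2}$ only reaches exponents up to $\alpha+2 < 2(\alpha+1)$), though this is harmless because your Young exponents correctly take the input in $L^{(\alpha+2)/(\alpha+1)}$.

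The genuine gap is in the $s$-derivative, which is exactly where the paper spends its effort. Showing that the symbol $|\xi|^{2s}\log(|\xi|^2)/(|\xi|^{2s}+\lambda)^2$ is a bounded Mikhlin multiplier on $L^p$ does not by itself produce an operator from $L^{(\alpha+2)/(\alpha+1)}$, where $|Q|^\alpha Q$ lives, into $X^\alpha=L^2\cap L^{\alpha+2}$: for that you must retain a smoothing resolvent-type factor, i.e.\ split the symbol, absorbing the logarithm into an arbitrarily small power $|\xi|^\sigma$ and keeping a factor comparable to $(|\xi|^{2s_0-\sigma}+1)^{-1}$ on which a Young or Sobolev argument can act; as written, the bounded multiplier you exhibit is the whole operator and leaves nothing to smooth with. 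Likewise, your justification of differentiating under the integral in the semigroup representation cites Lemma \ref{lem:feller}, but that lemma bounds $P^{(s)}(x,t)$ only, not $\partial_s P^{(s)}(x,t)$, whose kernel carries the extra factor $t(-\DD)^s\log(-\DD)$ and would require new estimates with uniform integrability in $t$ near $0$ and $\infty$; so the dominated-convergence step is not covered by the tools you invoke. The paper sidesteps both difficulties by staying on the Fourier side: it Taylor-expands $f(\xi,s)=(|\xi|^{2s}+\lambda)^{-1}$ in $s$ with a second-order remainder, proves $|\partial_s f|, |\partial_s^2 f| \lesssim (|\xi|^{2s_0-\sigma}+1)^{-1}$ (the logarithm costs only $|\xi|^\sigma$), and converts these $L^2$-symbol bounds into $X^\alpha$ bounds through the embedding $\| u \|_{X^\alpha} \lesssim \| ((-\DD)^{\frac{s_\alpha}{2}}+1) u \|_2$ with $s_\alpha = \frac{\alpha}{2(\alpha+2)} < s_0$, together with \eqref{ineq:Fwell}; the same symbol-difference bounds give continuity of the partials in $(s,\lambda)$, where your proposed pointwise continuity of $G_{s,\lambda}$ and of its $s$-derivative is again not available from the cited lemmas. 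You should either adopt this symbol-plus-embedding route or supply the missing kernel-derivative estimates.
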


\begin{proof}
First, we show that $F : X^\alpha \times \Rplus \times [s_0,1) \to X^\alpha \times \R$ is well-defined. From Lemma \ref{lem:resolvent_p} together with Young's and H\"older's inequality we find that
\begin{equation} \label{ineq:Fwell}
\left \| \frac{1}{(-\DD)^s + \lambda} |Q|^\alpha f \right \|_{q} \lesssim_{\lambda, s, p} \| Q \|_{r}^{\alpha} \| f \|_r,
\end{equation}
where $1 < p < \infty$ and $1 \leq q,r \leq \infty$ satisfy 
\begin{equation}
\frac{1}{q} + 1 - \frac{1}{p} = \frac{\alpha+1}{r}, \quad 1- \frac{1}{p} < 2s.   
\end{equation}
In particular, if we choose $r=\alpha+2$ and $q=2$, we find $1-\frac{1}{p} = \frac{\alpha}{2 (\alpha+2)} < s_0 < 2s$ since $\alpha < \amax(s_0)$. Furthermore, by setting $r = \alpha+2$ and $q=\alpha+2$, we see that $1-\frac{1}{p} = \frac{\alpha}{\alpha+2} < 2 s_0 \leq 2s$ due to $\alpha < \amax(s_0)$. Hence we can apply \eqref{ineq:Fwell} to conclude that $F(Q,\lambda,s)$ is well-defined. 

Next, we turn to the Fr\'echet differentiability of $F$. (Recall that we restrict to real-valued functions.) First, we consider the second component of the map $F=(F_1, F_2)$, which is given by
$$
F_2(Q,\lambda,s) := \| Q \|_{\alpha+2}^{\alpha+2} -c_0 .
$$
with some fixed constant $c_0 \in \R$.  It is easy to see that $F_2(Q, \lambda,s)$ is Fr\'echet differentiable with 
$$
\frac{\partial F_2}{\partial Q} = (\alpha+2) \langle |Q|^{\alpha} Q, \cdot \rangle, \quad \frac{\partial F_2}{\partial \lambda} =0, \quad \frac{\partial F_2}{\partial s} = 0,
$$
where $\langle f, \cdot \rangle$ denotes the map $g \mapsto \langle f, g\rangle$. Moreover, it is straightforward to check that $\frac{\partial F_2}{\partial Q}$ depends continuously on $Q$ with respect to the topology in $X = L^2(\R) \cap L^{\alpha+2}(\R)$. Let us now turn to the Fr\'echet differentiability of the first component 
$$
F_1(Q, \lambda, s) := Q - \frac{1}{(-\DD)^s + \lambda} |Q|^{\alpha} Q .
$$
We claim that
$$
\frac{\partial F_1}{\partial Q} = 1 - \frac{1}{(-\DD)^s + \lambda} (\alpha+1) |Q|^{\alpha} , \quad \frac{\partial F_1}{\partial \lambda} = \frac{1}{((-\DD)^s + \lambda)^2} |Q|^{\alpha} Q, 
$$
and
$$
\frac{\partial F_1}{\partial s} = \frac{(-\DD)^s \log (-\DD) }{((-\DD)^s + \lambda)^2} |Q|^{\alpha} Q .  
$$
Indeed, it follows from standard arguments (e.\,g,~Sobolev embeddings, H\"older inequality) combined with \eqref{ineq:Fwell}) that the derivatives $\frac{\partial F_1}{\partial Q}$, $\frac{\partial F_1}{\partial \lambda}$ and $\frac{\partial F_1}{\partial s}$ exist and are given as above. For instance, to prove this claim for $\frac{\partial F_1}{\partial s}$ we argue as follows. Let $(Q,\lambda,s) \in X \times \Rplus \times [s_0,1)$ be fixed and suppose that $s+h \in [s_0, 1)$ with $h \in \R$ and $h \neq 0$. We have to show that
\begin{equation*}
F_1(Q, \lambda, s+h) - F_1(Q, \lambda, s) = \frac{\partial F_1}{\partial s}(Q,\lambda,s) h + r(h) ,
\end{equation*}
where $|h|^{-1} r(h) \to 0$ in $X^\alpha$ as $h \to 0$. To show this fact, we consider the function 
$$
f(\xi,s) :=  \frac{1}{{|\xi|^{2s}+ \lambda}} \quad \mbox{for $\xi \in \R$ and $s \in [s_0,1)$.}
$$ 
An elementary calculation yields
$$
\frac{\partial f}{\partial s} = - \frac{|\xi|^{2s} \log (|\xi|^2) }{(|\xi|^{2s} + \lambda)^2}, \quad
\frac{\partial^2 f}{\partial s^2} = 2 \frac{ |\xi|^{4s}  (\log(|\xi|^2))^2}{(|\xi|^{2s} + \lambda)^3} - \frac{ |\xi|^{2s}  (\log (|\xi|^2))^2}{(|\xi|^{2s} + \lambda)^2} .
$$
In particular, for any $s_0/2 > \sigma > 0$ and $s \geq s_0$, we have the following bounds
$$
\left | \frac{\partial f}{\partial s } \right | \lesssim_{s_0, \sigma} \frac{|\xi|^{2s+\sigma} +1}{(|\xi|^{2s} + \lambda)^2} \lesssim_{\sigma,s_0,\lambda}  \frac{1}{|\xi|^{2s_0-\sigma} +1 },
$$
$$
\left | \frac{\partial^2 f}{\partial s^2 } \right | \lesssim_{s_0,\sigma} \frac{|\xi|^{4s+\sigma} +1}{(|\xi|^{2s} + \lambda)^3} + \frac{|\xi|^{2s+\sigma} +1}{(|\xi|^{2s} + \lambda)^2}  \lesssim_{\delta,s_0,\sigma} \frac{1}{|\xi|^{2s_0-\sigma} +1 } .
$$
Next, by Sobolev inequalities, we obtain
\begin{equation} \label{ineq:sobX}
\| u \|_{X^\alpha} \lesssim_{\alpha} \| ((-\DD)^{\frac{s_\alpha}{2}} + 1) u \|_2 \quad \mbox{with $s_\alpha = \displaystyle \frac{\alpha}{2(\alpha+2)}$.}
\end{equation}
Note that $s_\alpha < s_0$ since $\alpha < \amax(s_0)$. Now, by Plancherel's identity and Taylor's theorem applied to $f(\xi,s)$ and estimate \eqref{ineq:Fwell}, we deduce (with $\frac{\partial F_1}{\partial s}$ given above) the following estimate:
\begin{align*}
& \left \| F_1(Q, \lambda,s+h) - F_1(Q,\lambda,s) - \frac{\partial F_1}{\partial s} (Q,\lambda,s) h \right \|_{X^\alpha} \\
& \lesssim_{\alpha}  h^2 \sup_{\xi \in \R} \left | (| \xi|^{s_\alpha} +1) \frac{\partial^2 f}{\partial s^2}(\xi,s) (|\xi|^{s_\alpha+\eps}+1) \right | \left \| \frac{1}{(-\DD)^{\frac{s_\alpha+\eps}{2}} + 1} |Q|^\alpha Q \right \|_2 \\ 
&\lesssim_{\alpha, \lambda, s_0, \sigma} h^2 \sup_{\xi \in \R} \left ( \frac{|\xi|^{2s_\alpha + \sigma} +1}{|\xi|^{2s_0- \sigma} + 1 } \right ) \| Q \|_{\alpha+2}^{\alpha+1} \lesssim_{\alpha, \lambda, s_0, \sigma} h^2 \| Q \|_{X^\alpha}^{\alpha+1} ,
\end{align*}
with some small constant $\sigma > 0$ such that  $s_0 > s_\alpha + 2 \sigma$ holds, which is possible since $s_\alpha < s_0$. Also, we used above  that $\| (-\DD)^{\frac{s_\alpha + \eps}{2}} +1)^{-1} |Q|^\alpha |Q| \|_2 \lesssim_{s_\alpha, \eps} \| Q \|_{\alpha+2}^{\alpha+1}$ by \eqref{ineq:Fwell}. Thus we conclude that $\frac{\partial F_1}{\partial s}$ exists and is given as claimed. 

Let us now turn to the continuity of $\frac{\partial F_1}{\partial Q}$, $\frac{\partial F_1}{\partial \lambda}$ and $\frac{\partial F_1}{\partial s}$. Again, this follows from standard arguments  in combination with \eqref{ineq:Fwell}. For example, to show that $\frac{\partial F_1}{\partial Q}$ depends continuously on $(Q,\lambda, s)$, we can argue as follows. Let $(Q, \lambda,s) \in X^\alpha \times \Rplus \times [s_0,1)$ be fixed and suppose that $\eps > 0$ is given. We have to find $\delta > 0$ such that
\begin{equation} \label{ineq:cont_FQ}
\left \| \Big ( \frac{\partial F_1}{\partial Q} (Q, \lambda, s) - \frac{\partial F_1}{\partial Q}(\tilde{Q}, \tilde{\lambda}, \tilde{s}) \Big ) f  \right \|_{X^\alpha} \leq \eps \| f \|_{X^\alpha},
\end{equation} 
whenever $\| Q - \tilde{Q} \|_{X^\alpha} + |\lambda-\tilde{\lambda}| + |s - \tilde{s}| \leq \delta$ and $(\tilde{Q}, \tilde{\lambda}, \tilde{s}) \in X^\alpha \times \Rplus \times [s_0,1)$. Indeed, by using \eqref{ineq:sobX}, we see that \eqref{ineq:cont_FQ} follows if we can show that
\begin{equation} \label{ineq:cont_FQ2}
\left \| ( A_{s, \lambda} |Q|^\alpha - A_{\tilde{s}, \tilde{\lambda}} |\tilde{Q}|^{\alpha} ) f \right \|_2 \leq \eps \| f \|_{X^\alpha},
\end{equation}
where we set
$$
A_{s, \lambda} := \frac{ (-\DD)^{\frac{s_\alpha}{2}}+1}{(-\DD)^s + \lambda} . 
$$
Next, we note that $A_{\tilde{s},\tilde{\lambda}}=A_{\tilde{s},\lambda}-(\lambda-\tilde{\lambda})B_{\tilde{s},\lambda, \tilde{\lambda}}$ with
$$
B_{\tilde{s},\lambda,\tilde{\lambda}} = \frac{ (-\DD)^{\frac{s_\alpha}{2}} + 1 }{ ( (-\Delta)^{\tilde{s}}+\tilde{\lambda})( (-\DD)^{\tilde{s}} + \lambda) } .
$$
Furthermore, we observe that
$$
A_{s,\lambda} |Q|^\alpha - A_{\tilde{s},\lambda} |\tilde{Q}|^\alpha
= \big ( A_{s,\lambda} - A_{\tilde{s},\lambda} \big ) |Q|^\alpha
+ A_{s,\lambda} \big( |\tilde{Q}|^\alpha-|Q|^\alpha \big ) \,.
$$
Hence the left-hand side of \eqref{ineq:cont_FQ2} can be estimated as follows
\begin{equation*} \label{ineq:cont3}
\mbox{LHS of \eqref{ineq:cont_FQ2}} \leq I + II + III ,
\end{equation*}
where
$$
I = \left \| \big (A_{s,\lambda} - A_{\tilde{s}, \lambda} \big )|Q|^{\alpha} f \right \|_{2}, \quad 
 II= \left \|A_{s,\lambda} \big ( |\tilde{Q}|^\alpha - |Q|^\alpha \big ) f \right \|_{2}, 
$$
$$
III = \left \| (\lambda-\tilde\lambda) B_{\tilde{s}, \lambda, \tilde{\lambda}} |\tilde Q|^\alpha f  \right \|_{2}.
$$
To estimate $I$, we recall the bounds for $f(\xi, s)$ derived above and we find (with $\sigma > 0$ small such that $s_0 > s_\alpha + 2 \sigma$) the following bound
\begin{align*}
I & \leq \sup_{\xi \in \R} \left | (f(\xi, s) - f(\xi, \tilde{s})) (|\xi|^{s_\alpha} +1) (|\xi|^{s_\alpha+\sigma} +1)  \right | \left \| \frac{1}{(-\DD)^{\frac{s_\alpha+\sigma}{2}} +1 } |Q|^\alpha f \right \|_2 \\
& \lesssim_{s_0, \alpha, \sigma, \lambda} |s-\tilde{s}| \sup_{\xi \in \R} \left ( \frac{ |\xi|^{2s_\alpha + \sigma} +1}{|\xi|^{2s_0 - \sigma} +1} \right ) \| Q \|_{\alpha+2}^{\alpha} \| f \|_{\alpha+2} \\
& \lesssim_{s_0, \alpha, \sigma, \lambda} |s-\tilde{s}| \| Q \|_{X^\alpha}^\alpha \| f \|_{X^\alpha} \leq \frac{\eps}{3} \| f \|_{X^\alpha},
\end{align*} 
provided that $| s-\tilde{s} | \leq \delta$ for some $\delta > 0$. Here we also used \eqref{ineq:Fwell}.

To control $II$, we choose again $\sigma > 0$ small such that $s_0 > s_\alpha + \sigma$, which yields  
\begin{align*}
II & \lesssim_{s_0, \alpha, \sigma, \lambda} \sup_{\xi \in \R} \left | \frac{(|\xi|^{s_\alpha}+1)(|\xi|^{s_\alpha + \sigma} +1)}{|\xi|^{2s_0-\sigma} + \lambda} \right | \left \| \frac{1}{(-\DD)^{\frac{s_\alpha + \sigma}{2}} +1 }  ( |\tilde{Q}|^\alpha - |Q|^\alpha) f \right \|_2 \\
& \lesssim_{s_0, \alpha, \sigma, \lambda} \| |\tilde{Q}|^\alpha - |Q|^\alpha \|_{\frac{\alpha+2}{\alpha}}  \| f \|_{\alpha+2} .
\end{align*}
Suppose now that $0 < \alpha \leq 1$. Then $| |\tilde{Q}|^\alpha - |Q|^\alpha| \leq |\tilde{Q} - Q|^\alpha$ pointwise a.\,e.~in $\R$. On the other hand, if we have $\alpha > 1$, we deduce that $| |\tilde{Q}|^\alpha - |Q|^{\alpha} | \lesssim_\alpha (|\tilde{Q}|^{\alpha-1}+ |Q|^{\alpha-1} ) |\tilde{Q} - Q|$ pointwise a.\,e.~in $\R$. Hence, in either case, we can apply H\"older's inequality to conclude that
$$
II \lesssim_{s_0, \alpha, \sigma, \lambda, \| Q \|_{X^\alpha}} \| \tilde{Q} - Q \|_{X^\alpha}^{\min \{\alpha,1 \}} \|f \|_{X^\alpha} \leq \frac{\eps}{3} \| f \|_{X^\alpha}, 
$$
provided that $\| \tilde{Q} - Q \|_{X^\alpha} \leq \delta$ for some $\delta > 0$.

Finally, we remark that we readily deduce that
$$
III \lesssim_{s, \alpha, \lambda} |\tilde\lambda-\lambda| \| Q \|_{\alpha+2}^\alpha \| f \|_{\alpha+2} \leq \frac{\eps}{3} \| f \|_{X^\alpha},
$$
provided that $|\tilde\lambda-\lambda| \leq \delta$ for some $\delta > 0$. This completes the proof that $\frac{\partial F_1}{\partial Q}$ depends continuously on $(Q, \lambda,s)$. 

The arguments that show continuity for the derivatives $\frac{\partial F_1}{\partial \lambda}$ and $\frac{\partial F_2}{\partial s}$ are very similar to the estimates given above. Therefore we omit the details, and the proof of Lemma \ref{lem:F_C1} is now complete. \end{proof}

\end{appendix}

\bibliographystyle{siam}


\begin{thebibliography}{10}

\bibitem{AbBoFeSa89}
{\sc L.~Abdelouhab, J.~L. Bona, M.~Felland, and J.-C. Saut}, {\em Nonlocal
  models for nonlinear, dispersive waves}, Phys. D, 40 (1989), pp.~360--392.

\bibitem{AbSt64}
{\sc M.~Abramowitz and I.~A. Stegun}, {\em Handbook of mathematical functions
  with formulas, graphs, and mathematical tables}, vol.~55 of National Bureau
  of Standards Applied Mathematics Series, For sale by the Superintendent of
  Documents, U.S. Government Printing Office, Washington, D.C., 1964.

\bibitem{AlBo91}
{\sc J.~P. Albert and J.~L. Bona}, {\em Total positivity and the stability of
  internal waves in stratified fluids of finite depth}, IMA J. Appl. Math., 46
  (1991), pp.~1--19.

\bibitem{AmTo91}
{\sc C.~J. Amick and J.~F. Toland}, {\em Uniqueness and related analytic
  properties for the {B}enjamin-{O}no equation---a nonlinear {N}eumann problem
  in the plane}, Acta Math., 167 (1991), pp.~107--126.

\bibitem{BaKu04}
{\sc R.~Ba{\~n}uelos and T.~Kulczycki}, {\em The {C}auchy process and the
  {S}teklov problem}, J. Funct. Anal., 211 (2004), pp.~355--423.

\bibitem{BeBr83}
{\sc D.~P. Bennett, R.~W. Brown, S.~E. Stansfield, J.~D. Stroughair, and J.~L.
  Bona}, {\em The stability of internal solitary waves}, Math. Proc. Cambridge
  Philos. Soc., 94 (1983), pp.~351--379.

\bibitem{BlGe60}
{\sc R.~M. Blumenthal and R.~K. Getoor}, {\em Some theorems on stable
  processes}, Trans. Amer. Math. Soc., 95 (1960), pp.~263--273.

\bibitem{CaSi07}
{\sc L.~Caffarelli and L.~Silvestre}, {\em An extension problem related to the
  fractional {L}aplacian}, Comm. Partial Differential Equations, 32 (2007),
  pp.~1245--1260.

\bibitem{CaMaSi90}
{\sc R.~Carmona, W.~C. Masters, and B.~Simon}, {\em Relativistic
  {S}chr\"odinger operators: asymptotic behavior of the eigenfunctions}, J.
  Funct. Anal., 91 (1990), pp.~117--142.

\bibitem{ChGuNaTs07}
{\sc S.-M. Chang, S.~Gustafson, K.~Nakanishi, and T.-P. Tsai}, {\em Spectra of
  linearized operators for {NLS} solitary waves}, SIAM J. Math. Anal., 39
  (2007/08), pp.~1070--1111.

\bibitem{ChGo10}
{\sc S.-Y.~A. Chang and M.~Gonzalez}, {\em Fractional laplacian in conformal
  geometry}.
\newblock Preprint. ar{X}iv:1003.0398v1, 2010.

\bibitem{ChLiOu06}
{\sc W.~Chen, C.~Li, and B.~Ou}, {\em Classification of solutions for an
  integral equation}, Comm. Pure Appl. Math., 59 (2006), pp.~330--343.

\bibitem{ElSc07}
{\sc A.~Elgart and B.~Schlein}, {\em Mean field dynamics of boson stars}, Comm.
  Pure Appl. Math., 60 (2007), pp.~500--545.

\bibitem{FrSe08}
{\sc R.~L. Frank and R.~Seiringer}, {\em Non-linear ground state
  representations and sharp {H}ardy inequalities}, J. Funct. Anal., 255 (2008),
  pp.~3407--3430.

\bibitem{FrLe07}
{\sc J.~Fr{\"o}hlich and E.~Lenzmann}, {\em Blowup for nonlinear wave equations
  describing boson stars}, Comm. Pure Appl. Math., 60 (2007), pp.~1691--1705.

\bibitem{Go09}
{\sc M.~d.~M. Gonz{\'a}lez}, {\em Gamma convergence of an energy functional
  related to the fractional {L}aplacian}, Calc. Var. Partial Differential
  Equations, 36 (2009), pp.~173--210.

\bibitem{GrZw03}
{\sc C.~R. Graham and M.~Zworski}, {\em Scattering matrix in conformal
  geometry}, Invent. Math., 152 (2003), pp.~89--118.

\bibitem{KeMaRo10}
{\sc C.~E. Kenig, Y.~Martel, and L.~Robbiano}, {\em Local well-posedness and
  blow up in the energy space for a class of {$L^2$} critical dispersion
  generalized {B}enjamin-{O}no equations}.
\newblock Preprint. ar{X}iv:1006.0122v1, 2010.

\bibitem{Kw89}
{\sc M.~K. Kwong}, {\em Uniqueness of positive solutions of {$\Delta
  u-u+u^p=0$} in {${\bf R}^n$}}, Arch. Rational Mech. Anal., 105 (1989),
  pp.~243--266.

\bibitem{LiBo96}
{\sc Y.~A. Li and J.~L. Bona}, {\em Analyticity of solitary-wave solutions of
  model equations for long waves}, SIAM J. Math. Anal., 27 (1996),
  pp.~725--737.

\bibitem{Li04}
{\sc Y.~Y. Li}, {\em Remark on some conformally invariant integral equations:
  the method of moving spheres}, J. Eur. Math. Soc. (JEMS), 6 (2004),
  pp.~153--180.

\bibitem{LiYa87}
{\sc E.~H. Lieb and H.-T. Yau}, {\em The {C}handrasekhar theory of stellar
  collapse as the limit of quantum mechanics}, Comm. Math. Phys., 112 (1987),
  pp.~147--174.

\bibitem{Lin08}
{\sc Z.~Lin}, {\em Instability of nonlinear dispersive solitary waves}, J.
  Funct. Anal., 255 (2008), pp.~1191--1224.

\bibitem{MaZh10}
{\sc L.~Ma and L.~Zhao}, {\em Classification of positive solitary solutions of
  the nonlinear {C}hoquard equation}, Arch. Ration. Mech. Anal., 195 (2010),
  pp.~455--467.

\bibitem{Ma84}
{\sc R.~Maehara}, {\em The {J}ordan curve theorem via the {B}rouwer fixed point
  theorem}, Amer. Math. Monthly, 91 (1984), pp.~641--643.

\bibitem{MaMcTa97}
{\sc A.~J. Majda, D.~W. McLaughlin, and E.~G. Tabak}, {\em A one-dimensional
  model for dispersive wave turbulence}, J. Nonlinear Sci., 7 (1997),
  pp.~9--44.

\bibitem{Mc93}
{\sc K.~McLeod}, {\em Uniqueness of positive radial solutions of {$\Delta
  u+f(u)=0$} in {${\bf R}^n$}. {II}}, Trans. Amer. Math. Soc., 339 (1993),
  pp.~495--505.

\bibitem{McSe87}
{\sc K.~McLeod and J.~Serrin}, {\em Uniqueness of positive radial solutions of
  {$\Delta u+f(u)=0$} in {${\bf R}^n$}}, Arch. Rational Mech. Anal., 99 (1987),
  pp.~115--145.

\bibitem{ReSi78}
{\sc M.~Reed and B.~Simon}, {\em Methods of modern mathematical physics. {IV}.
  {A}nalysis of operators}, Academic Press [Harcourt Brace Jovanovich
  Publishers], New York, 1978.

\bibitem{We85}
{\sc M.~I. Weinstein}, {\em Modulational stability of ground states of
  nonlinear {S}chr\"odinger equations}, SIAM J. Math. Anal., 16 (1985),
  pp.~472--491.

\bibitem{We87}
\leavevmode\vrule height 2pt depth -1.6pt width 23pt, {\em Existence and
  dynamic stability of solitary wave solutions of equations arising in long
  wave propagation}, Comm. Partial Differential Equations, 12 (1987),
  pp.~1133--1173.

\end{thebibliography}








\end{document}